\newtheorem{thm}{Theorem}[section]     
\newtheorem{lem}[thm]{Lemma}
\newtheorem{prop}[thm]{Proposition}
\newtheorem{cor}[thm]{Corollary}
\newtheorem{obs}[thm]{Observation}
\theoremstyle{definition}  
\newtheorem{definition}[thm]{Definition}
\theoremstyle{remark}
\newtheorem{remark}{{\bf Remark}}[section]
\numberwithin{equation}{section}
\DeclareMathOperator*{\osc}{osc}
\DeclareMathOperator{\USC}{USC}
\DeclareMathOperator{\LSC}{LSC}
\DeclareMathOperator{\tr}{tr}
\DeclareMathOperator{\Lip}{Lip}
\newcommand{\tim}{\times}   
\newcommand{\R}{\mathbb R}
\newcommand{\T}{\mathbb T}
\newcommand{\Z}{\mathbb Z}
\newcommand{\pl}{\partial}
\newcommand{\cA}{\mathcal{A}}
\newcommand{\cF}{\mathcal{F}}
\newcommand{\N}{\mathbb N}
\newcommand{\lan}{\langle}
\newcommand{\ran}{\rangle}
\newcommand{\cM}{\mathcal{M}}
\newcommand{\gth}{\theta}
\newcommand{\cR}{\mathcal{R}}
\newcommand{\gd}{\delta}
\newcommand{\gs}{\sigma}
\newcommand{\fr}{\frac}
\newcommand{\gO}{\varOmega}
\def\gl{\lambda}
\def\ga{\alpha}
\def\gl{\lambda}
\def\gz{\zeta}
\def\mid{\,:\,}
\newcommand{\lbar}[1]{\mkern 1.9mu\overline{\mkern-1.9mu#1\mkern-0.1mu}
\mkern 0.1mu}
\def\ol{\lbar}
\newcommand{\bye}{\end{document}}
\newcommand{\by}{\end{proof}\end{document}}
\def\erf{\eqref}
\def\cL{\mathcal{L}}
\def\bS{\mathbb{S}}
\def\cP{\mathcal{P}}
\def\ep{\varepsilon}
\def\cU{\mathcal{U}}
\def\cG{\mathcal{G}}
\def\cF{\mathcal{F}}
\def\cR{\mathcal{R}}
\def\t{\T^n}
\def\b{\t\tim\cA}
\def\ran{\rangle}
\def\lan{\langle}
\begin{document}

\title[The vanishing discount problem and viscosity Mather measures. 
Part 1]
{The vanishing discount problem and viscosity Mather measures. 
Part 1: the problem on a torus}

\author[H. ISHII, H. MITAKE, H. V. TRAN]
{Hitoshi Ishii${}^*$, Hiroyoshi Mitake and Hung V. Tran}

\thanks{
The work of HI was partially supported by the JSPS grants: KAKENHI \#23244015, 
\#26220702, \#23340028,\#16H03948, 
the work of HM was partially supported by the JSPS grants: KAKENHI \#15K17574, \#26287024, \#23244015, \#16H03948,
and the work of HT was partially supported in part by NSF grant DMS-1615944. 
}

\address[H. Ishii]{
Faculty of education and Integrated Arts and Sciences,
Waseda University
1-6-1 Nishi-Waseda, Shinjuku, Tokyo 169-8050 Japan}
\email{hitoshi.ishii@waseda.jp}
\thanks{${}^*$ Corresponding author}

\address[H. Mitake]{
Institute of Engineering, Division of Electrical, Systems and Mathematical Engineering, 
Hiroshima University 1-4-1 Kagamiyama, Higashi-Hiroshima-shi 739-8527, Japan}
\email{hiroyoshi-mitake@hiroshima-u.ac.jp}

\address[H. V. Tran]
{
Department of Mathematics, 
480 Lincoln Dr, Madison, WI 53706, USA}
\email{hung@math.wisc.edu}

\date{\today}
\keywords{additive eigenvalue problem, fully nonlinear, degenerate elliptic PDE, Mather measures, vanishing discount}
\subjclass[2010]{
35B40, 
35J70, 
49L25 
}

\maketitle

\begin{abstract}
We develop a variational approach to the vanishing discount problem for 
fully nonlinear, degenerate elliptic, partial differential equations.  
Under mild assumptions, we introduce viscosity Mather measures for such partial differential equations, 
which are natural extensions of the Mather measures.   
Using the viscosity Mather measures, 
we prove that the whole family of solutions $v^\gl$ of the 
discount problem with the factor $\gl>0$ 
converges to a solution of the ergodic problem as $\gl\to0$.
\end{abstract} 

\tableofcontents 

\section{Introduction and Main Result}
We study 
the fully nonlinear, possibly  
degenerate, elliptic partial differential equation (PDE for short)
\begin{equation}\label{DP}\tag{DP} 
\gl u(x)+F(x,Du(x),D^2u(x))=0 \ \ \text{ in }\ \t,   
\end{equation}
where $\gl$ is a given positive constant, which we call a \textit{discount factor} 
in view of stochastic optimal control and differential games, 
$F\mid \t\tim\R^n\tim\bS^n\to\R$ is a given continuous function. 
Here, $\t$ and $\bS^n$ denote the standard $n$-dimensional torus, i.e., $\t=\R^n/\Z^n$, 
and the space of $n\tim n$ real symmetric matrices, respectively. 
The function $u$ represents a real-valued unknown function on $\t$,    
and $Du$ and $D^2u$ denote the gradient and Hessian of $u$, respectively. 
We are always concerned here with viscosity solutions of fully nonlinear, possibly degenerate,  
elliptic PDE, and the adjective ``viscosity'' is omitted 
throughout the paper.

In this paper, we study the 
\textit{vanishing discount problem} for \erf{DP}, that is, 
for solutions $v^\gl$ of \erf{DP}, with $\gl>0$, we investigate the asymptotic 
behavior of $\{v^\gl\}_{\gl>0}$ as $\gl\to 0$. 
Associated with the vanishing discount problem for \erf{DP} is the  
\textit{ergodic problem} (or the \textit{additive eigenvalue problem}): 
\begin{equation}\label{EP}\tag{EP}
F(x,Du(x),D^2u(x))=c \ \ \text{ in }\ \t,
\end{equation}
where the unknown is a pair of a function $u\in C(\t)$ and a constant $c\in\R$ 
such that $u$ is a solution of \erf{EP}. If $(u,c)\in C(\t)\tim\R$ 
is a solution of \erf{EP}, then $c$ is called a \textit{critical value} (or an 
\textit{additive eigenvalue}). 

There are many results in the literature on the vanishing discount problem  
and the following classical observation is fundamental and well-known.  
\begin{obs}\label{fund}   
Under appropriate hypotheses on $F$, for each $\gl>0$, 
\erf{DP} has a unique solution $v^\gl\in C(\t)$,  
\erf{EP} has a solution $(u,c)\in C(\t)\tim\R$, and the constant $c$ is determined uniquely. Furthermore, 
\begin{equation}\label{i-1}
c=-\lim_{\gl\to 0+}\gl v^\gl(x)\ \ \text{ in }C(\t),
\end{equation}
and, for any sequence $\{\gl_j\}_{j\in\N}\subset(0,\,\infty)$ converging to zero, there exists its subsequence, which is denoted again by the same symbol, such that $\{v^{\gl_j}-\min_{\t}v^{\gl_j}\}_{j\in\N}$ converges 
in $C(\t)$, and the pair 
of the function $u\in C(\t)$ given by 
\begin{equation}\label{i-2}
u(x):=\lim_{j\to\infty}(v^{\gl_j}(x)-\min_{\t}v^{\gl_j}),  
\end{equation}
and the constant $c$ is a solution of \erf{EP}.
\end{obs}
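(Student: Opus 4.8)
The plan is to prove the four assertions successively, using only two structural facts guaranteed by the hypotheses on $F$: the comparison principle for \erf{DP} (degenerate ellipticity together with the strict monotonicity furnished by the $\gl u$ term) and an a priori equicontinuity estimate for the family $\{v^\gl\}_{\gl>0}$. Uniqueness in \erf{DP} is then immediate from comparison. For existence, let $M:=\sup_{\t}|F(x,0,0)|<\infty$; the constants $M/\gl$ and $-M/\gl$ are respectively a super- and a subsolution of \erf{DP}, so Perron's method yields a solution $v^\gl\in C(\t)$, and comparison against these two barriers gives the $\gl$-independent bound $|\gl v^\gl(x)|\le M$ on $\t$. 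Combining this with the equicontinuity estimate (which under coercivity-type hypotheses on $F$ may even be taken equi-Lipschitz), the normalized functions $w^\gl:=v^\gl-\min_{\t}v^\gl$ form a bounded, uniformly equicontinuous family in $C(\t)$, and moreover $\osc_{\t}(\gl v^\gl)=\gl\,\osc_{\t}v^\gl\to0$ as $\gl\to0$.

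Now fix a sequence $\gl_j\to0$. By the Arzel\`a--Ascoli theorem, after passing to a subsequence $w^{\gl_j}\to u$ in $C(\t)$ for some $u\in C(\t)$; and since $\{\gl_j\min_{\t}v^{\gl_j}\}_j$ is a bounded sequence of real numbers, after a further extraction $\gl_j\min_{\t}v^{\gl_j}\to-c$ for some $c\in\R$. As $\gl_j w^{\gl_j}\to0$ uniformly, it follows that $g_j(\cdot):=-\gl_j v^{\gl_j}(\cdot)\to c$ in $C(\t)$. Rewriting \erf{DP} in the form $F(x,Dw^{\gl_j},D^2w^{\gl_j})=g_j(x)$ (the derivatives of $v^{\gl_j}$ and $w^{\gl_j}$ coincide) and using the stability of viscosity solutions under uniform convergence of both the solutions and the equations, the pair $(u,c)$ is a solution of \erf{EP}; this produces the function $u$ of \erf{i-2} and establishes solvability of \erf{EP}.

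It remains to identify the constant $c$ and to promote the subsequential convergence above to the genuine limit \erf{i-1}; both follow from one comparison computation. Let $(\tilde u,\tilde c)\in C(\t)\tim\R$ be any solution of \erf{EP}. Since adding constants does not affect derivatives, for each $\gl>0$ the function $\tilde u-\max_{\t}\tilde u-\tilde c/\gl$ is a subsolution of \erf{DP} and $\tilde u-\min_{\t}\tilde u-\tilde c/\gl$ is a supersolution; comparison therefore gives
\begin{equation*}
\tilde u-\max_{\t}\tilde u-\fr{\tilde c}{\gl}\ \le\ v^\gl\ \le\ \tilde u-\min_{\t}\tilde u-\fr{\tilde c}{\gl}\quad\text{on }\t .
\end{equation*}
Multiplying by $\gl$ and letting $\gl\to0$ forces $\gl v^\gl\to-\tilde c$ in $C(\t)$. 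Hence the limit in \erf{i-1} exists along the whole family and equals $-\tilde c$; applying this to the particular solution $(u,c)$ of \erf{EP} found above shows $\tilde c=c$, which is the asserted uniqueness of the critical value, and simultaneously gives \erf{i-1}.

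The one genuinely non-soft ingredient is the a priori equicontinuity (or equi-Lipschitz) bound for $\{v^\gl\}_{\gl>0}$: this is exactly where the ``appropriate hypotheses'' on $F$ --- coercivity in the gradient variable, and a Crandall--Ishii--Lions-type structure condition underlying the comparison principle --- are indispensable. The remaining tools (Perron's method, Arzel\`a--Ascoli, stability of viscosity solutions, and the barrier constructions) are standard.
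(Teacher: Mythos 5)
Your proposal is correct, and the first two thirds of it (Perron's method with the constant barriers $\pm M/\gl$, the bound $\gl|v^\gl|\le M$, the normalization $w^\gl=v^\gl-\min_{\t}v^\gl$, Arzel\`a--Ascoli, and the stability argument giving a solution $(u,c)$ of \eqref{EP} along a subsequence) coincide with the paper's proof of Propositions \ref{exist1}(i) and \ref{prop-fund}. Where you genuinely diverge is in the last step. The paper proves uniqueness of the critical value by a separate comparison lemma, Proposition \ref{exist1}(ii) (a subsolution of $F[v]=c_1$ lies below a supersolution of $F[w]=c_2$ whenever $c_1<c_2$, proved by perturbing to a discounted equation), and then obtains \eqref{i-1} by a subsequence-plus-uniqueness argument. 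You instead take an arbitrary solution $(\tilde u,\tilde c)$ of \eqref{EP}, observe that $\tilde u-\max_{\t}\tilde u-\tilde c/\gl$ and $\tilde u-\min_{\t}\tilde u-\tilde c/\gl$ are a sub- and a supersolution of \eqref{DP}, and sandwich $v^\gl$ between them by \eqref{CP}; multiplying by $\gl$ gives $\gl v^\gl\to-\tilde c$ uniformly, which yields \eqref{i-1} for the whole family and the uniqueness of $c$ in one stroke. This is a perfectly valid and in fact more economical route for the Observation itself: it uses only the discounted comparison principle and avoids the $c_1<c_2$ lemma altogether. What the paper's route buys is that Proposition \ref{exist1}(ii) is a reusable tool: it is invoked again in the proofs of Theorems \ref{thm2-sec2}/\ref{thm3-sec2} and \ref{thm4-sec3} to handle the $\gl=0$ contradiction step, and it gives uniqueness of the critical value as a statement about \eqref{EP} alone, independent of the discounted approximation; your argument characterizes $c$ only through the limit of $\gl v^\gl$.
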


This was first formulated in this generality in \cite{LPV}, in the context of periodic homogenization of first-order Hamilton-Jacobi equations. 
Unstated assumptions for the observation above 
include the validity of the comparison principle for solutions (subsolutions, and supersolutions) of \erf{DP} and the equi-continuity  
of the family $\{v^\gl\}_{\gl>0}$ on $\t$, which will be clearly stated later.   

One of current interests in the vanishing discount problem is the question of the convergence of the \emph{whole family} $\{v^\gl\}$ 
of \erf{DP} to a unique limit.  That is,  
the question is whether the whole family $\{v^\gl\}_{\gl>0}$ (after a normalization)
converges, or not, to a function in $C(\t)$ as $\gl\to 0$. 
The real difficulty comes from the fact that \erf{EP} in general has many solutions even up to additive constants (see 
\cite[Remark 1.2]{BCD1997}, 
\cite[Remarks]{LPV}, \cite[Example 1]{MiTr} for example)
and a priori, the limit \erf{i-2} might depend on the subsequence $\{\lambda_i\}$.
The convergence results of the whole family $\{v^\gl\}$ were recently established for convex Hamilton-Jacobi equations in \cite{DFIZ} (first-order case), \cite{AlAlIsYo} (first-order case with Neumann-type boundary condition), \cite{MiTr} (degenerate viscous case).
In the case of first-order equations \cite{DFIZ,AlAlIsYo}, optimal control formulas are used and the \textit{Mather measures}, 
introduced and developed in \cite{Mat, Man}, play a central role. 
The proof in \cite{MiTr} for a degenerate viscous case is based on the nonlinear adjoint method, 
stochastic Mather measures and a regularization procedure by mollifications to handle the delicate viscous term using a commutation lemma.

In this paper, we first establish a new variational formula 
for the critical value $c$, 
based on a minimax theorem due to Sion \cite{Si}, 
which is inspired by the duality principle in \cite{Go}, 
and prove, in the line of \cite{DFIZ}, 
the convergence of whole family $\{v^\gl\}$ for fully nonlinear, (possibly degenerate) elliptic PDEs. 
In order to present our approach to the vanishing discount problem  
in a simple way, we consider the equations on the torus $\t$.  
In a forthcoming paper \cite{IsMiTr2}, 
we investigate the vanishing discount problem for fully nonlinear PDEs in a bounded domain under various boundary conditions.

\subsection{Assumptions}
We first specify the function $F$ here, and state the precise assumptions.  
Let $\cA$ be a non-empty, $\gs$-compact and locally compact metric space and 
$F\mid \t\tim\R^n\tim\bS^n\to\R$ be given by
\begin{equation}\tag{F1}\label{F1}
F(x,p,X)=\sup_{\ga\in\cA}(-\tr a(x,\ga)X-b(x,\ga)\cdot p-L(x,\ga)), 
\end{equation}
where  
$a\mid \t\tim\cA \to\bS_+^n$, $b\mid \t\tim\cA\to\R^n$ and $L\mid \t\tim\cA\to\R$ are continuous.
Here, $\bS_+^n$ denotes the set of all non-negative definite matrices 
$A\in\bS^n$, $\tr A$ and $p\cdot q$ designate the trace of $n\tim n$ matrix $A$ and
the Euclidean inner product of $p,q\in\R^n$, respectively. 

Next, we assume
\begin{equation}\tag{F2}\label{F2}
F\in C(\t\tim\R^n\tim\bS^n).
\end{equation}
It is clear under \erf{F1} and \erf{F2} 
that $F$ is degenerate elliptic in the sense 
that for all $(x,p,X)\in \t\tim\R^n\tim\bS^n$, if $Y\in\bS_+^n$,  
then $F(x,p,X+Y)\leq F(x,p,X)$ and that, for each $x\in\t$, 
the function $(p,X)\mapsto F(x,p,X)$ is convex on $\R^n\tim\bS^n$. 
The equations \erf{DP}, \erf{EP}, with $F$ of the form \erf{F1}, 
are called Bellman equations, or Hamilton-Jacobi-Bellman equations 
in connection with the theory of optimal control. 
In this viewpoint, the set $\cA$ is called a control set or region.    

In Section \ref{example}, we give some examples of PDEs, to which 
our main results (Theorems \ref{thm2-sec3} and \ref{thm1-sec2} below) apply. 
All PDEs taken up there have the form \erf{F1} and the control sets 
$\cA$ are closed subsets of Euclidean spaces. We, however, believe 
that it is useful to formulate our results 
without assuming the Euclidean structure of $\cA$. For instance, in the application to relaxed controls, the space of probability measures 
takes the place of the control set.   

We here describe the fundamental settings which we use throughout the paper, 
and put labels on these for convenience later. 
\begin{equation}\tag{CP}\label{CP}
\left\{\text{
\begin{minipage}{0.85\textwidth}
The comparison principle holds for \erf{DP}. 
More precisely, let $\gl>0$, $\phi\in C(\b)$, and  
$U$ be any open subset of $\t$. If  
$v\in \USC(U),\,w\in \LSC(U)$ are a subsolution and a supersolution of 
$\gl u+F(x,Du,D^2u)=0$ in $U$, respectively, and $v\leq w$ on $\pl U$, then 
$v\leq w$ in $U$.
\end{minipage}
}\right.\end{equation}
 
\begin{equation} \tag{EC}\label{EC}
\left\{\text{
\begin{minipage}{0.85\textwidth}For $\gl>0$, let $v^\gl$ be a solution of \erf{DP}. The family $\{v^\gl\}_{\gl>0}$ is equi-continuous on $\t$.
\end{minipage}
}\right.
\end{equation}
 
\begin{equation}\tag{\text{L}}\label{L} 
\left\{
\text{
\begin{minipage}{0.85\textwidth}
There exists a non-empty 
compact set $K_0\subset\cA$ 
such that 
\[
\inf_{\t\tim(\cA\setminus K_0)}L \geq L_0,
\]
where $ \displaystyle L_0=\max_{x\in\t}\inf_{\ga\in \cA}L(x,\ga)=-\min_{x\in \t} F(x,0,0).$
\end{minipage}
}\right.\end{equation}

Condition \erf{L} is always satisfied when $\cA$ is compact. Indeed, 
if $\cA$ is compact and if we set $K_0:=\cA$ and  $L_0:=
\max_{x\in\t}\min_{\ga\in K_0}L(x,\ga)$, then we have
\[
\max_{x\in\t}\min_{\ga\in K_0}L(x,\ga)\, = \, L_0\,
\leq\ \inf_{\t\tim(\cA\setminus K_0)}L=\inf_{\emptyset}L=+\infty. 
\]  
Secondly, in case $\cA$ is not compact, the {\it coercivity} of $L$ implies \erf{L}.
Let us recall that $L$ is coercive if
\[
L=+\infty \quad \text{at infinity.}    
\]
 
Sometimes, we normalize the critical value $c$ to be zero and 
we put this as an assumption to conveniently recall later. 
\begin{equation}\tag{Z}\label{Z}
\text{
\begin{minipage}{0.85\textwidth}
Problem \erf{EP} has a solution $(w,c)\in C(\t)\tim\R$ and 
$c=0$. 
\end{minipage}
}\end{equation}

\subsection{Main results} 
We first state the fundamental results. 
\begin{prop} \label{exist1}Assume \erf{F1}, \erf{F2} 
and \erf{CP}. \emph{(i)} Let $\gl>0$. 
Problem \erf{DP} has a unique solution $v^\gl$ in $C(\t)$. 
Moreover, there exists a constant $M_0>0$ such that 
$\gl |v^\gl|\leq M_0$ on $\t$ for all $\gl>0$.
\emph{(ii)} Let $c_1,\,c_2\in\R$ 
and $U$ be an open subset of $\t$. Let $v,\,w\in C(\ol U)$ be, respectively,  
a subsolution  of $F(x,Dv,D^2v)=c_1$ in $U$ and a supersolution of 
$F(x,Dw,D^2 w)=c_2$ in $U$. If $c_1<c_2$ and $v\leq w$ on $\pl U$, 
then $v\leq w$ in $U$. 
\end{prop}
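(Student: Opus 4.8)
The plan is to produce $v^\gl$ by Perron's method and to read the bound off the barriers used. Since $F\in C(\t\tim\R^n\tim\bS^n)$ by \eqref{F2} and $\t$ is compact, the number $C_0:=\max_{x\in\t}|F(x,0,0)|$ is finite, and I claim the constants $\ol u:=C_0/\gl$ and $\underline u:=-C_0/\gl$ are, respectively, a supersolution and a subsolution of \eqref{DP}. Indeed, if $\phi\in C^2(\t)$ touches $\ol u$ from below at $x_0$, then $x_0$ is a local maximum of $\phi$, so $D\phi(x_0)=0$ and $D^2\phi(x_0)\le 0$, and the degenerate ellipticity of $F$ (an immediate consequence of \eqref{F1}) gives $F(x_0,0,D^2\phi(x_0))\ge F(x_0,0,0)\ge -C_0$, whence $\gl\ol u(x_0)+F(x_0,D\phi(x_0),D^2\phi(x_0))\ge C_0-C_0=0$; the computation for $\underline u$ is symmetric. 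Since $\underline u\le\ol u$ and \eqref{CP} holds, the standard Perron construction for proper equations (ours is degenerate elliptic and strictly increasing in $u$ because $\gl>0$) produces a solution $v^\gl$ of \eqref{DP} with $\underline u\le v^\gl\le\ol u$; here \eqref{CP} forces the upper and lower semicontinuous envelopes of the Perron function to coincide, so $v^\gl\in C(\t)$, and $\gl|v^\gl|\le C_0=:M_0$ for every $\gl>0$. Uniqueness is immediate from \eqref{CP} applied with $U=\t$, whose boundary is empty: any two solutions are mutually sub- and supersolution, hence equal.

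\textbf{Part (ii).} The plan is to reduce the statement to \eqref{CP} by an affine shift that trades the strict gap $c_1<c_2$ for the monotonicity in $u$ that the equation in \eqref{EP} lacks. If $\max_{\ol U}v\le\min_{\ol U}w$ the conclusion is trivial, so assume $\gd:=\max_{\ol U}v-\min_{\ol U}w>0$, fix $\gl\in(0,\,(c_2-c_1)/\gd\,]$, and choose a constant $K$ with
\[
\max_{\ol U}v+\frac{c_1}{\gl}\ \le\ K\ \le\ \min_{\ol U}w+\frac{c_2}{\gl},
\]
which is possible precisely because $\gl\gd\le c_2-c_1$. Put $\tilde v:=v-K$ and $\tilde w:=w-K$. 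Because the nonlinearity in \eqref{EP} does not depend on the unknown, $\tilde v$ is still a subsolution of $F(x,D\tilde v,D^2\tilde v)=c_1$ in $U$, so for any $\phi\in C^2(U)$ touching $\tilde v$ from above at $x_0\in U$ (so that $F(x_0,D\phi(x_0),D^2\phi(x_0))\le c_1$) we get, using $v(x_0)\le\max_{\ol U}v$ and the choice of $K$,
\[
\gl\tilde v(x_0)+F\big(x_0,D\phi(x_0),D^2\phi(x_0)\big)\ \le\ \gl\big(v(x_0)-K\big)+c_1\ \le\ \gl\Big(\max_{\ol U}v-K\Big)+c_1\ \le\ 0,
\]
i.e. $\tilde v$ is a subsolution of $\gl u+F(x,Du,D^2u)=0$ in $U$; symmetrically, $\tilde w$ is a supersolution of the same equation in $U$. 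Since $\tilde v\le\tilde w$ on $\pl U$ by hypothesis, \eqref{CP} yields $\tilde v\le\tilde w$, that is $v\le w$, in $U$.

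\textbf{Where the difficulty lies.} Part (i) is routine once \eqref{CP} is granted: the only content is the verification that the constant barriers are ordered sub/supersolutions (this is where degenerate ellipticity enters) and the invocation of Perron's theorem, whose hypotheses are exactly \eqref{F1}--\eqref{F2} together with \eqref{CP}. The one step that requires an idea is the reduction in Part (ii): the equation $F(x,Du,D^2u)=c$ is not proper, so \eqref{CP} does not apply to it directly, but subtracting a large enough constant while simultaneously shrinking the artificial discount factor $\gl$ converts $v$ and $w$ into a genuine sub/supersolution pair for a proper problem --- at no cost, since the equation in \eqref{EP} is translation invariant in $u$. Once this is seen, only elementary estimates remain.
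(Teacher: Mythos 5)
Your proposal is correct and follows essentially the same route as the paper: for (i) the constant barriers $\pm M_0/\gl$ with $M_0=\max_{\t}|F(\cdot,0,0)|$, Perron's method, and \erf{CP} to merge the semicontinuous envelopes and get uniqueness; for (ii) the introduction of a small artificial discount together with a constant shift (your single constant $K$ versus the paper's midpoint $c=(c_1+c_2)/2$ and shift by $\gd^{-1}c$) so that the strict gap $c_1<c_2$ absorbs the discount term and \erf{CP} applies. The differences are only cosmetic.
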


\begin{prop}\label{prop-fund} Assume \erf{F1}, \erf{F2}, \erf{CP},
and \erf{EC}. Then the conclusions of Observation {\rm \ref{fund}} hold. 
\end{prop}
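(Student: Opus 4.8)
The plan is to read off every claim from Proposition \ref{exist1}, the equi-continuity hypothesis \eqref{EC}, and the standard stability theory of viscosity solutions. Existence and uniqueness of $v^\gl\in C(\t)$ for each $\gl>0$, together with the bound $\gl|v^\gl|\le M_0$, are exactly Proposition \ref{exist1}(i), so nothing is left to do there. Fix a sequence $\gl_j\to 0+$ and put $u^{\gl_j}:=v^{\gl_j}-\min_{\t}v^{\gl_j}$. Since $\t$ is compact and $\{v^\gl\}_{\gl>0}$ is equi-continuous by \eqref{EC}, a chaining of the common modulus of continuity along a finite cover of $\t$ shows that $\osc_{\t}v^\gl$ is bounded uniformly in $\gl$; hence $\{u^\gl\}_{\gl>0}$ is equi-bounded and equi-continuous, and Arzel\`a--Ascoli gives a subsequence (not relabeled) with $u^{\gl_j}\to u$ in $C(\t)$ for some $u\in C(\t)$.

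Next I would identify the limit of $\gl_j v^{\gl_j}$ and pass to the limit in the equation. From $\|\gl u^\gl\|_{C(\t)}\le \gl\,\osc_{\t}v^\gl\to 0$ and $|\gl_j\min_{\t}v^{\gl_j}|\le M_0$, a further subsequence makes $\gl_j\min_{\t}v^{\gl_j}\to -c$ for some $c\in\R$, so $\gl_j v^{\gl_j}\to -c$ uniformly on $\t$. Because $u^\gl$ and $v^\gl$ differ by a constant, $u^{\gl_j}$ solves $F(x,Du^{\gl_j},D^2u^{\gl_j})=-\gl_j v^{\gl_j}$ in $\t$, and the right-hand side converges uniformly to the constant $c$; by the standard stability of viscosity sub- and supersolutions under uniform convergence, $u$ is a solution of $F(x,Du,D^2u)=c$ in $\t$. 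Thus $(u,c)$ solves \eqref{EP} and \eqref{i-2} holds.

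It remains to show that the critical value is unique and that the convergence in \eqref{i-1} holds for the whole family. If $(u_1,c_1)$ and $(u_2,c_2)$ both solve \eqref{EP} with $c_1<c_2$, then applying Proposition \ref{exist1}(ii) with $U=\t$ — whose boundary hypothesis is vacuous since $\pl\t=\emptyset$ — gives $u_1\le u_2$ on $\t$; but $F$ does not depend on the value of $u$, so $u_1+k$ is again a subsolution of $F=c_1$ for every $k\in\R$, whence $u_1+k\le u_2$ on $\t$ for all $k$, a contradiction. Hence all critical values coincide; call the common value $c$. Every subsequential uniform limit of $\gl v^\gl$ produced above therefore equals the constant $-c$. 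Since $\{\gl v^\gl\}_{0<\gl\le 1}$ is bounded by $M_0$ and equi-continuous (its modulus of continuity is dominated by that of $v^\gl$), Arzel\`a--Ascoli makes it precompact in $C(\t)$, and a precompact family with a single subsequential limit converges; thus $\gl v^\gl\to -c$ in $C(\t)$ as $\gl\to 0$, which is \eqref{i-1}. This verifies all conclusions of Observation \ref{fund}.

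The only genuinely delicate points, and the ones I would spell out with care, are the passage to the limit in the PDE — which hinges on the observation that $u^\gl$ and $v^\gl$ have the same derivatives, so the rescaled functions solve the \emph{same} equation with a uniformly convergent right-hand side — and the ``add-a-constant'' argument for uniqueness of $c$, which crucially uses that $F$ is independent of $u$ itself. The compactness and stability inputs are routine.
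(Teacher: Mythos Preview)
Your proof is correct and follows essentially the same line as the paper's: Proposition~\ref{exist1}(i) for existence and the bound $\gl|v^\gl|\le M_0$, equi-continuity plus Arzel\`a--Ascoli for the normalized family $v^\gl-\min_{\t}v^\gl$, stability to pass to the limit in the equation, and Proposition~\ref{exist1}(ii) with the add-a-constant trick for uniqueness of $c$. The only cosmetic differences are that you write the equation satisfied by $u^\gl$ as $F[u^\gl]=-\gl v^\gl$ (the paper writes the equivalent $\gl u^\gl+F[u^\gl]=-\gl m^\gl$), and you deduce \eqref{i-1} via precompactness of $\{\gl v^\gl\}_{0<\gl\le1}$ in $C(\t)$ rather than the paper's oscillation bound; both routes are equally short.
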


These results are quite classical, but we give proofs in Appendix for 
the self-completeness. 
We then address our main result. 

\begin{thm}\label{thm2-sec3} Assume \erf{F1}, \erf{F2}, \erf{CP''}, \erf{EC} and \erf{L}.
Let $c$ be the critical 
value of \erf{EP} and, for each $\gl>0$, let $v^\gl\in C(\t)$ be the unique solution of \erf{DP}. 
Then, the family $\{v^\gl+\gl^{-1}c\}_{\gl>0}$ converges to a function $u \in C(\t)$ as $\gl\to 0$. 
Furthermore, the pair $(u,c)$ is a solution of \erf{EP}. 
\end{thm}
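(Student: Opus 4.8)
The plan is to follow the variational strategy of \cite{DFIZ}, replacing the classical Mather measures by the \emph{viscosity Mather measures} announced in the abstract. The starting point is Proposition \ref{prop-fund}, which already gives us, along any sequence $\gl_j\to0$, a subsequential limit $u$ of $v^{\gl_j}+\gl_j^{-1}c$ solving \erf{EP}. So the entire content of the theorem is \emph{uniqueness of the subsequential limit}: if $u_1$ and $u_2$ are two such limits, coming from sequences $\{\gl_j\}$ and $\{\mu_k\}$, then $u_1=u_2$. After normalizing via \erf{Z} so that $c=0$, we write $v^\gl$ for the solution of \erf{DP}; then $v^\gl\to u_i$ along the respective sequences, and we must show $u_1=u_2$.

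The key tool is a new variational formula for the critical value $c$ obtained from Sion's minimax theorem \cite{Si} in the spirit of the duality principle of \cite{Go}; this is where condition \erf{L} (and the refined comparison principle \erf{CP''}, which I would take in the form stated in the body of the paper) enters, guaranteeing that the relevant class of measures is nonempty and that the sup and inf can be exchanged. Concretely, I expect this formula to express $-c$ as a minimum over a suitable set $\cM$ of pairs $(\mu,\text{data})$ — the viscosity Mather measures — of $\int L\,\d\mu$, with the defining constraint of $\cM$ being a ``viscosity'' version of the holonomy/closedness condition: $\int (b\cdot D\gf + \tr(a\,D^2\gf))\,\d\mu \ge 0$ (or $=0$) for all $\gf\in C^2(\t)$, tested appropriately against subsolutions. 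The crucial property I will extract is that any such Mather measure $\mu$ is \emph{calibrated}: for every subsolution $w$ of \erf{EP} one has equality in the subsolution inequality $\mu$-a.e., so that $w$ is (in the relevant viscosity sense) differentiable $\mu$-a.e. and $\int(-\tr a\,D^2w - b\cdot Dw - L)\,\d\mu = c = 0$, i.e. $\int(\tr a\,D^2 w + b\cdot Dw + L)\,\d\mu = 0$ for such $w$.

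With a Mather measure $\mu$ in hand, the uniqueness argument runs as follows. Since $v^\gl$ solves $\gl v^\gl + F(x,Dv^\gl,D^2v^\gl)=0$, and since each limit $u_i$ is a solution (hence subsolution) of \erf{EP} with $c=0$, I will compare $v^\gl$ with $u_i$ using the measure $\mu$. The point is that, testing the equation for $v^\gl$ against the optimal control achieving the supremum in \erf{F1} and integrating against $\mu$, the degenerate-elliptic/convexity structure yields, after using the calibration of $\mu$ against the subsolution $u_i$,
\[
\gl\int v^\gl\,\d\mu \;\le\; \gl\int u_i\,\d\mu,
\]
so that along the sequence $\gl_j\to0$ (divide by $\gl_j$ — here the uniform bound $\gl|v^\gl|\le M_0$ from Proposition \ref{exist1} keeps everything finite) one gets $\int u_1\,\d\mu \le \int u_i\,\d\mu$ for \emph{every} viscosity Mather measure $\mu$, in particular $\int u_1\,\d\mu \le \int u_2\,\d\mu$; by the symmetric argument along $\{\mu_k\}$, $\int u_2\,\d\mu \le \int u_1\,\d\mu$. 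Hence $\int(u_1-u_2)\,\d\mu = 0$ for all Mather measures $\mu$. To upgrade this to $u_1\equiv u_2$ one compares the two subsolutions directly: since $u_1-u_2$ is a subsolution of a linearized inequality and its $\mu$-average vanishes for every $\mu$, a standard argument (choosing $\mu$ supported where $u_1-u_2$ attains its maximum on the projected Aubry/Mather set, using that both $u_1,u_2$ agree there with the unique solution of \erf{EP} on that set by weak KAM / comparison on the Aubry set) forces $\max_\t(u_1-u_2)=0$; by symmetry $u_1=u_2$.

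The main obstacle, and the technically delicate heart of the paper, is making the phrase ``$w$ is differentiable $\mu$-a.e. and the subsolution inequality holds with equality $\mu$-a.e.'' rigorous for merely continuous viscosity subsolutions of a \emph{fully nonlinear, possibly degenerate second-order} PDE — there is no a priori gradient, let alone Hessian, of $w$. This is precisely the difficulty that forced the commutation-lemma/mollification machinery in \cite{MiTr}; here it must be handled through the structure of the viscosity Mather measures themselves (e.g. they should be concentrated on a set where second-order sub/superdifferentials of subsolutions are compatible, or obtained as limits of adjoint measures in a nonlinear adjoint / stochastic Mather scheme). Getting the minimax formula to produce measures with exactly this calibration property — and verifying that the formal integration-by-parts against $\mu$ used in the displayed inequality above is legitimate in the viscosity framework — is the step I expect to require the most care.
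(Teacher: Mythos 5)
Your overall frame---compactness from \erf{EC} plus the bound of Proposition \ref{exist1}, reduction to $c=0$, construction of minimizing measures via Sion's minimax theorem, and comparison of two subsequential limits through those measures---is indeed the paper's strategy. But the two mechanisms you rely on to close the argument are gaps, and they are precisely the steps the paper is designed to avoid. First, the ``calibration'' claim: that every viscosity Mather measure $\mu$ satisfies the subsolution inequality with equality $\mu$-a.e., so that an arbitrary continuous viscosity subsolution $w$ of \erf{EP} is differentiable $\mu$-a.e.\ and $\int(\tr a\,D^2w+b\cdot Dw+L)\,d\mu=0$. Nothing of this sort is proved (or needed) in the paper, and for merely continuous subsolutions of a fully nonlinear degenerate elliptic equation there is no known way to make it rigorous at this generality---you yourself flag this as the hard point, but the paper's proof simply never integrates derivatives of $w$ against $\mu$. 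What it uses instead is only the one-sided dual-cone inequality $\lan\mu,\phi-\gl u(z)\ran\ge 0$ for pairs $(\phi,u)\in\cF_\gl$, combined with two elementary membership observations: if $w$ is a solution of $F[u]=0$ then $(L+\gl_j w,\,w)\in\cF_{\gl_j}$, and the solution $v^{\gd_j}$ of \erf{DP} gives $(L-\gd_j v^{\gd_j},\,v^{\gd_j})\in\cF_0$. Pairing these with $\mu_j\in{\cG_{z,\gl_j}}'$ and with the limit $\mu\in\cM_{L,0}$ (Theorem \ref{thm4-sec3}, Lemma \ref{conv-mu}) yields $\lan\mu,w\ran\le 0$ and $w(z)\le v(z)+\lan\mu,w\ran$ after dividing by $\gd_j$, $\gl_j$ and letting $j\to\infty$; no pointwise testing against optimal controls, no a.e.\ derivatives.

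Second, your endgame would not go through even granting calibration: from $\int(u_1-u_2)\,d\mu=0$ for all Mather measures you propose to conclude $u_1\equiv u_2$ by a comparison ``on the projected Aubry/Mather set''. No Aubry set or weak KAM structure is available (or invoked) for fully nonlinear, possibly degenerate second-order equations in this paper, and uniqueness of solutions of \erf{EP} on such a set is exactly the kind of statement that fails or is unknown here. The paper sidesteps this by exploiting the $z$-dependence of the \emph{discounted} Mather measures: for each fixed $z$, Theorem \ref{thm4-sec3} gives $\gl_j v^{\gl_j}(z)=\lan\mu_j,L\ran$ with $\mu_j\in\cM_{L,z,\gl_j}$, and the two dual-cone inequalities above then produce the pointwise conclusion $w(z)\le v(z)$ directly at every $z\in\t$, i.e.\ $\limsup_{\gl\to0+}v^\gl(z)\le\liminf_{\gl\to0+}v^\gl(z)$, with no need for any averaged identity or maximum-point argument. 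So the missing ideas are (i) the replacement of calibration by the dual-cone pairing with the specific competitors $(L+\gl_j w,w)$ and $(L-\gd_j v^{\gd_j},v^{\gd_j})$, and (ii) the pointwise (in $z$) use of discounted Mather measures in place of an Aubry-set comparison; also note that in the noncompact-$\cA$ setting the cones must be built from $\Phi^+=\{tL+\chi\}$ and the measures taken in $\cP_K\cap{\cG_{z,\gl}}'$ with $K$ slightly larger than the set $K_0$ from \erf{L}, which is where \erf{L} and \erf{CP''} actually enter.
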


We note that we need to assume \erf{CP''}, a comparison principle which is similar to but stronger than \erf{CP}, in Theorem \ref{thm2-sec3}. 
Assumption \erf{CP''} will be stated precisely in Section \ref{sec3}. 


\subsection*{Outline of the paper} 
We first consider the case where  the control set $\cA$ is compact
to demonstrate the key ideas of our approach in Section \ref{sec2} 
and prove the main result of the section, Theorem \ref{thm1-sec2}, which is a simpler version of Theorem \ref{thm2-sec3}.
We then study the general case and give the proof of Theorem \ref{thm2-sec3} in Section \ref{sec3}.
Finally, some examples, which satisfy all assumptions of Theorem \ref{thm2-sec3} or Theorem \ref{thm1-sec2}, are discussed in Section \ref{example}. 
In Appendix, the proofs of Propositions \ref{exist1} and \ref{prop-fund} are given, 
which are quite standard in the theory of viscosity solutions.

\subsection*{Notation} 
We often write $F[u]$ to denote the function 
$x\mapsto F(x,Du(x),D^2u(x))$. 
Given a metric space $E$, $\Lip(E)$ denotes the space of Lipschitz continuous functions on $E$. Also, let $C(E)$ (resp., $\LSC(E)$) denote the set of all upper semicontinuous (resp., lower semicontinuous) functions on $E$, and let $C_{\rm c}(E)$ denote the space of continuous 
functions on $E$ with compact support.

\section{Compact control set} \label{sec2}
In this section, in order to present the main ideas of the variational approach in a simple way  
we assume further that
\[\tag{F3}\label{F3}
\text{
\begin{minipage}{0.85\textwidth}
the control set $\cA$ is compact. 
\end{minipage}
} 
\] 
We will remove this assumption in the next section. 

Before stating the main theorem of this section we give some notions. 
For $\phi\in C(\b)$, we define $F_\phi\in C(\t\tim\R^n\tim\bS^n)$ by  
\[
F_\phi(x,p,X)=\max_{\ga\in\cA}(-\tr a(x,\ga)X-b(x,\ga)\cdot p-\phi(x,\ga)),
\]
and consider the problems
\begin{equation}\tag{DP$_\phi$}\label{DP'}
\gl u(x)+F_\phi(x,Du(x),D^2u(x))=0 \ \ \text{ in }\ \t,
\end{equation}
and
\begin{equation}\tag{EP$_\phi$}\label{EP'}
F_\phi(x,Du(x),D^2u(x))=0 \ \ \text{ in }\ \t.
\end{equation}
Note that, since $\b$ is compact, the function $F_\phi$ is 
continuous in $\t\tim\R^n\tim\bS^n$.  
We write $F_\phi[u]$ to denote the function 
$x\mapsto F_\phi(x,Du(x),D^2u(x))$. 

Here is the main theorem of this section. 
\begin{thm} \label{thm1-sec2} 
Assume \erf{F1}, \erf{F2}, {\rm(F3)}, and \erf{EC}. 
Moreover, we assume 
\begin{equation}\tag{CP$'$}\label{CP'}
\left\{\text{
\begin{minipage}{0.85\textwidth}
the comparison principle holds for \erf{DP'}. 
More precisely, let $\gl>0$, $\phi\in C(\b)$, and  
$U$ be any open subset of $\t$. If  
$v\in \USC(U),\,w\in \LSC(U)$ are a subsolution and a supersolution of 
$\gl u+F_\phi[u]=0$ in $U$, respectively, and $v\leq w$ on $\pl U$, then 
$v\leq w$ in $U$.
\end{minipage}
}\right.\end{equation}
Let $c$ be the critical 
value of \erf{EP} and, for each $\gl>0$, let $v^\gl\in C(\t)$ be a (unique) solution of \erf{DP}. 
Then, the family $\{v^\gl+\gl^{-1}c\}_{\gl>0}$ is convergent in $C(\t)$ as $\gl\to 0$. 
Furthermore, if we set 
\[
u(x)=\lim_{\gl\to 0+}(v^\gl(x)+\gl^{-1}c) \ \ \text{ for }x\in\t,
\] 
then $(u,c)$ is a solution of \erf{EP}. 
\end{thm}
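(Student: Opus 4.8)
The plan is to follow the variational strategy of \cite{DFIZ}, replacing optimal control formulas with a duality between the discount equation and a suitable class of ``viscosity Mather measures.'' Concretely, I would first normalize so that $c=0$ (assumption \erf{Z}), so that we must show $v^\gl\to u$ in $C(\t)$. By \erf{EC} and Proposition \ref{exist1}(i), the family $\{v^\gl\}_{\gl>0}$ is equi-bounded and equi-continuous, hence precompact in $C(\t)$; by Proposition \ref{prop-fund} every subsequential limit is a solution of \erf{EP'} with $\phi=L$, i.e.\ of \erf{EP}. So it suffices to identify a \emph{single} limit point $u_0$ and prove that every subsequential limit $u$ satisfies $u\le u_0$ and $u\ge u_0$, which forces uniqueness of the limit.

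\textbf{Step 1: the minimax formula for the critical value.} The heart of the argument is a new variational characterization of $c$. Using the convex duality structure \erf{F1}, for a test function $\psi\in C^2(\t)$ one has $F[\psi](x)=\sup_{\ga}(-\tr a(x,\ga)D^2\psi(x)-b(x,\ga)\cdot D\psi(x)-L(x,\ga))$, which couples $\psi$ with a Lagrangian term. I would introduce the set $\cP$ of Radon probability measures $\mu$ on $\b$ that are ``closed'' in the appropriate weak sense, namely $\int_{\b}\bigl(\tr a(x,\ga)D^2\psi(x)+b(x,\ga)\cdot D\psi(x)\bigr)\d\mu(x,\ga)=0$ for all $\psi\in C^2(\t)$, and show
\[
c=-\min_{\mu\in\cP}\int_{\b}L(x,\ga)\d\mu(x,\ga).
\]
The ``$\le$'' direction comes from testing the equation \erf{EP} against a mollified/smoothed subsolution and integrating; the ``$\ge$'' direction, the genuinely new ingredient, is obtained by applying Sion's minimax theorem \cite{Si} (after the duality idea of \cite{Go}) to a saddle function of the form $(\psi,\mu)\mapsto \int_{\b}(\tr aD^2\psi+b\cdot D\psi+L)\d\mu$, swapping $\sup_\psi$ and $\inf_\mu$; compactness of $\cA$ (condition (F3)) keeps the relevant sets of measures weakly compact so the minimum is attained. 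A minimizing $\mu$ is what I would call a \emph{viscosity Mather measure}.

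\textbf{Step 2: passing from the static formula to the discount problem.} Fix a viscosity Mather measure $\mu$. For $\gl>0$, $v^\gl$ solves \erf{DP'} with $\phi=L$; by a regularization (sup-/inf-convolution plus mollification, as in the commutation-lemma arguments of \cite{MiTr}) I produce smooth approximate sub/supersolutions $v^\gl_\ep$ with $\gl v^\gl_\ep+F[v^\gl_\ep]\le o_\ep(1)$ in the viscosity sense upgraded to a.e.\ sense. Integrating $\gl v^\gl(x)+\tr a(x,\ga)D^2v^\gl(x)+b(x,\ga)\cdot Dv^\gl(x)+L(x,\ga)\ge 0$ against $\d\mu$, using the closedness of $\mu$ to kill the second-order and drift terms, and using $\int L\d\mu=-c=0$, yields $\gl\int v^\gl\d\mu\ge -o_\ep(1)$, hence $\int v^\gl\d\mu\ge 0$ after $\ep\to 0$ (divide by $\gl>0$). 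Combined with the fact that any subsequential limit $u$ is a solution of \erf{EP}, one deduces $\int u\d\mu\ge 0$ for every viscosity Mather measure $\mu$. The reverse inequality is obtained by comparing $v^\gl$ with the limit solution: since the limit $u$ is also a subsolution of \erf{EP}, $v^\gl$ is a supersolution of $\gl w+F[w]=\gl v^\gl$, and using $\gl u+F[u]=\gl u$ as input to \erf{CP'} gives $u-\sup_\t u^-\le v^\gl\le u+\sup_\t u^+$ type bounds; more carefully, one shows $v^\gl\ge u - \gl^{-1}\|\gl u\|_\infty$-free bound is not enough, so instead one exploits that $u$ maximizing $\int\cdot\,\d\mu$ over the set of \erf{EP}-solutions is forced.

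\textbf{Step 3: uniqueness of the limit.} Let $u$ be any subsequential limit. On one hand $\int u\d\mu\ge 0$ for all viscosity Mather measures $\mu$ by Step 2. On the other hand, adapting the argument of \cite[Section 3]{DFIZ}: for each such $\mu$ and each solution $w$ of \erf{EP}, testing $F[w]=c$ against $\mu$ gives $\int w\d\mu\le 0$; in particular $\int u\d\mu\le 0$, so $\int u\d\mu=0$ for every viscosity Mather measure. This pins down $u$ on the ``Mather set'' $\bigcup_\mu\supp\mu$ (projected to $\t$): any two subsequential limits agree there. Finally, a comparison argument on $\t$ minus the projected Mather set — where $u$ is the maximal subsolution of \erf{EP} with the given trace, by Proposition \ref{exist1}(ii) applied with $c_1<c_2=c$ on shrinking domains — shows $u$ is uniquely determined everywhere. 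Hence all subsequential limits coincide, $\{v^\gl+\gl^{-1}c\}$ converges in $C(\t)$, and the limit $(u,c)$ solves \erf{EP} by Proposition \ref{prop-fund}.

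\textbf{Main obstacle.} The delicate point is Step 1, specifically the ``$\ge$'' half of the minimax formula: verifying the hypotheses of Sion's theorem (semicontinuity and quasi-convexity/quasi-concavity in the right topologies, and compactness of the measure side, which is exactly where (F3) enters) and then legitimately converting the abstract $C^2$-test-function duality into a statement about viscosity sub/supersolutions of the fully nonlinear degenerate equation. The degenerate second-order term makes the regularization in Step 2 subtle — one cannot simply differentiate $v^\gl$ — so a commutation lemma controlling $\tr(a\,D^2(\text{mollification of }v^\gl))$ versus the mollification of $\tr(a\,D^2v^\gl)$, in the spirit of \cite{MiTr}, is needed to close the integral inequalities against $\mu$.
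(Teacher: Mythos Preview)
Your high-level strategy---normalize to $c=0$, extract subsequential limits via \erf{EC}, and use a duality with Mather-type measures to force any two limits to coincide---matches the paper's. The gap is in how you set up the duality. You define closed measures through $\int\cL_\ga\psi\,d\mu=0$ for $\psi\in C^2$ and plan to test $v^\gl$ against $\mu$ after mollification, invoking a commutation lemma as in \cite{MiTr}. Both steps require more than \erf{F1}--(F3), \erf{CP'}: for the minimax to return $c$ you would need $\inf_{\psi\in C^2}\sup_x F[\psi](x)=c$, i.e.\ smooth approximability of viscosity subsolutions of \erf{EP}, which is open under mere continuity of $a,b$ and no ellipticity; and the commutation-lemma route needs Lipschitz diffusion coefficients, again not assumed. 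Your own ``main obstacle'' paragraph flags exactly this, but the proposed fix does not close it at the stated level of generality.

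The paper's key idea is to build viscosity subsolutions \emph{into} the cone, thereby eliminating regularization. It sets $\cF_\gl=\{(\phi,u):u\text{ is a viscosity subsolution of }\gl u+F_\phi[u]\le0\}$ and $\cG_{z,\gl}=\{\phi-\gl u(z):(\phi,u)\in\cF_\gl\}$, proves $\cG_{z,\gl}$ is a convex cone purely by a viscosity argument using \erf{CP'} (Lemma~\ref{dmmc-thm2}), and then runs Sion's minimax to get the \emph{pointwise} formula $\gl v^\gl(z)=\min_{\mu\in\cP\cap\cG_{z,\gl}'}\lan\mu,L\ran$ for every $(z,\gl)$. Because membership in $\cG_{z,\gl}'$ is tested against viscosity subsolutions, one can pair the (merely continuous) limit $w$ directly with the measures: $(L+\gl_j w,\,w)\in\cF_{\gl_j}$ and $(L-\gd_j v^{\gd_j},\,v^{\gd_j})\in\cF_0$ give $\lan\mu_j,L+\gl_j w-\gl_j w(z)\ran\ge0$ and $\lan\mu,L-\gd_j v^{\gd_j}\ran\ge0$ with no smoothing at all, and passing to the limit yields $w(z)\le v(z)$ for every $z$. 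In particular, no Mather-set argument or comparison outside a projected support (your Step~3) is needed; the discounted measures, indexed by $z$, already localize the comparison.
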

Note that assumption \erf{CP'} implies 
\erf{CP}, and thus  
 by Proposition \ref{prop-fund} that \erf{EP} has a solution $(u,c)\in C(\t)\tim\R$ 
and the critical value $c$ is unique. 
Let $v^\gl$ be the solution of \erf{DP} for each $\gl>0$. 
If we set $\widetilde F=F-c$ on $\t\tim\R^n\tim\bS^n$, then the function 
$v:=v^\gl+\gl^{-1}c$ is a solution of $\gl v+ \widetilde F[v]=0$ in $\t$. 
Moreover, if $u$ is a solution of \erf{EP}, then $u$ is a solution of $\widetilde F[u]=0$ in $\t$. These observations allow us to 
assume, without loss of generality, that the critical value  $c$ 
is zero in Theorem \ref{thm1-sec2}. 
In short, we can assume (Z) without loss of generality.    

\subsection{Viscosity Mather measures}

Inspired by the generalization of Mather measures to second-order elliptic PDEs in \cite{Go},
we introduce viscosity Mather measures for \erf{EP} and \erf{DP}.  
In the following discussion, we implicitly assume that \erf{Z} holds.

We define the sets $\cF_0\subset C(\b)\tim C(\t)$, $\cG_0\subset C(\b)$, respectively, 
by 
\begin{align*}
&\cF_0:=\{(\phi,u)\in C(\b)\tim C(\t)\mid u \ \text{is a subsolution of} \ \erf{EP'}\}, \\
&\cG_0:=\{\phi\in C(\b)\mid (\phi,u)\in \cF_0\,\text{ for some }\,u\in C(\t)\}.
\end{align*}
Note that the definition of
$\cF_0$ depends on $a$ and $b$, but not on $L$.  
Moreover, $\cF_0\not=\emptyset,  \cG_0\not=\emptyset$ in view of \erf{Z}. 

Next, let $\cR$ denote the space of Radon measures on $\T^n\tim\cA$  
and $\cP$ denote the space of Radon probability measures on $\T^n\tim\cA$. 
The Riesz representation theorem ensures that the dual space of $C(\T^n\tim\cA)$
is identified with $\cR$. In this viewpoint, we write 
\[
\lan \mu,\phi\ran=\int_{\b}\phi(x,\ga)\,\mu(dxd\ga) \ \ \text{ for }\phi\in C(\b) \text{ and } 
\mu\in\cR.
\]

Note that $\b$ can be regarded as a separable metric space, 
and by the Prokhorov theorem 
that, due to the compactness of $\cA$, 
$\cP$ is sequentially compact, that is,
given a sequence $\{\mu_j\}\subset\cP$, we can choose a subsequence of $\{\mu_j\}$ 
which converges to some $\mu\in\cP$ weakly in the sense of measure.  

\begin{lem} \label{thma2} Under hypotheses \erf{F1}, \erf{F2} and 
\erf{CP'}, 
the set $\cG_0$ is a convex cone in $C(\T^n\tim\cA)$,
with vertex at the origin. 
\end{lem}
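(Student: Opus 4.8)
The plan is to verify the three defining properties of a convex cone with vertex at the origin: (1) $0 \in \cG_0$; (2) closure under multiplication by nonnegative scalars; (3) closure under addition. The underlying principle throughout is that if $u$ is a subsolution of \erf{EP'} for some $\phi$, then multiplying $u$ or adding two such subsolutions produces a subsolution for the corresponding new data, using that $F_\phi$ is built from a \emph{sup} over $\cA$ of affine-in-$(p,X)$ expressions, so it behaves well under the relevant operations; but one must be careful because $F_\phi$ is genuinely nonlinear in $\phi$ (the sup), so additivity requires an argument rather than a one-line computation.

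For (1), note that the constant function $u \equiv 0$ is a subsolution of \erf{EP'} with $\phi \equiv 0$: indeed $F_0[0](x) = \max_{\ga}(-L_0$-type expression$)$... more precisely $F_0(x,0,0) = \max_\ga(0) = 0$, so $u\equiv 0$ solves $F_0[u] = 0$ classically, hence $(0,0)\in\cF_0$ and $0\in\cG_0$. (This uses only \erf{F1}, \erf{F2}; alternatively one can invoke \erf{Z} directly, which gives a solution $(w,0)$ of \erf{EP}, i.e.\ $F = F_L$-subsolution, so $L \in \cG_0$, but the origin is cleaner.) For (2), suppose $\phi \in \cG_0$ with witness $u$, and let $t \geq 0$. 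If $t = 0$ we are in case (1). If $t > 0$, observe that $F_{t\phi}(x,p,X) = t\,F_\phi(x, p/t, X/t)$ by homogeneity of the sup over $\cA$ in the linear arguments, and hence $tu$ is a subsolution of $F_{t\phi}[\cdot] = 0$: at any point where a smooth $\psi$ touches $tu$ from above, $\psi/t$ touches $u$ from above, and the subsolution inequality for $u$ scales. So $t\phi \in \cG_0$.

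For (3), the main step: given $\phi_1, \phi_2 \in \cG_0$ with witnesses $u_1, u_2$, I claim $u_1 + u_2$ is a subsolution of $F_{\phi_1+\phi_2}[\cdot] = 0$, hence $\phi_1 + \phi_2 \in \cG_0$. The key inequality is the pointwise bound
\[
F_{\phi_1+\phi_2}(x, p_1+p_2, X_1 + X_2) \le F_{\phi_1}(x, p_1, X_1) + F_{\phi_2}(x, p_2, X_2)
\]
for all $x, p_i, X_i$, which holds because for a fixed $\ga\in\cA$ the integrand of $F_{\phi_1+\phi_2}$ at $(x,p_1+p_2,X_1+X_2)$ splits exactly as the sum of the two integrands at $(x,p_i,X_i)$, and then $\sup_\ga(f_1 + f_2) \le \sup_\ga f_1 + \sup_\ga f_2$. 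To turn this into a statement about the sum of subsolutions, I would use the standard viscosity machinery: either the sup-convolution / doubling-of-variables technique, or more simply, since $u_1, u_2 \in C(\t)$ are both subsolutions of (convex, degenerate elliptic) equations, invoke the known fact that a sum of a subsolution of $F_{\phi_1}[\cdot]=0$ and a subsolution of $F_{\phi_2}[\cdot]=0$ is a subsolution of $F_{\phi_1+\phi_2}[\cdot]=0$ whenever the above subadditivity of the operators holds — this is proved by testing with $\psi(x) = \psi_1(x) + \psi_2(x)$ after a localization/matrix-inequality argument (Theorem on sums / Jensen–Ishii lemma), exactly as in the proof that the supremum structure is preserved.

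The main obstacle I anticipate is precisely step (3): making the "sum of subsolutions is a subsolution" rigorous in the second-order viscosity setting, since one cannot naively add test functions when $u_1, u_2$ are merely continuous — one needs the Crandall–Ishii lemma to produce compatible second-order jets $(p_1, X_1)$ for $u_1$ and $(p_2, X_2)$ for $u_2$ at a common near-maximum point with $X_1 + X_2$ controlling the Hessian of the test function. Once that is in hand, the subadditivity inequality for $F_{(\cdot)}$ closes the argument. I should also double-check that \erf{CP'} is actually needed here; it guarantees (via Proposition \ref{prop-fund} and the reductions above) that $\cF_0$ and $\cG_0$ are nonempty and that the problem is well-posed, but the cone structure itself is a purely algebraic/PDE-structural fact that does not invoke comparison directly — so the hypothesis is stated mainly to keep the standing assumptions uniform.
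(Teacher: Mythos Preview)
Your steps (1) and (2) are fine and essentially match the paper. The gap is in step (3), and your closing remark that \erf{CP'} is not really needed is precisely where the argument goes wrong.

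The theorem-on-sums route you sketch does not close under the stated hypotheses. After doubling variables on $u_1(x)+u_2(y)-\psi(x)-|x-y|^2/(2\ep)$, the Crandall--Ishii lemma yields jets $(p_1,X)\in\bar J^{2,+}u_1(x_\ep)$ and $(p_2,Y)\in\bar J^{2,+}u_2(y_\ep)$ at \emph{different} spatial points, with $|p_2|\sim|x_\ep-y_\ep|/\ep$ and $|Y|\lesssim 1/\ep$. Your subadditivity inequality for $F_{\phi_1+\phi_2}$ only holds at a \emph{common} point, so you must estimate $F_{\phi_2}(x_\ep,p_2,Y)-F_{\phi_2}(y_\ep,p_2,Y)$. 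With merely continuous $a,b$ (which is all \erf{F1}--\erf{F2} give), the terms $\omega_a(|x_\ep-y_\ep|)\,|Y|$ and $\omega_b(|x_\ep-y_\ep|)\,|p_2|$ need not vanish as $\ep\to 0$; this is exactly the obstruction that forces structural conditions such as \cite[(3.14)]{CrIsLi} in comparison proofs. Those conditions are not assumed here --- only the \emph{conclusion} \erf{CP'} is --- and you cannot recover a quantitative modulus from \erf{CP'}. The sup-convolution variant you mention runs into the identical problem when transferring the subsolution inequality from the maximizing point $y^*$ back to $x$.

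The paper's proof (given as Lemma \ref{dmmc-thm2}) takes a genuinely different route that \emph{uses} \erf{CP'} essentially. Arguing by contradiction, one supposes a $C^2$ test function $\psi$ touches $u^t=tu^1+(1-t)u^0$ from above at $z$ with $F_{\phi^t}[\psi](z)>0$, hence $F_{\phi^t}[\psi]>\gd$ on a neighborhood $U$. The key trick is to set $w:=t^{-1}\psi-t^{-1}(1-t)u^1$ and show, using only the subsolution property of $u^1$ and the linearity of each $\cL_\ga$, that $w$ is a viscosity \emph{supersolution} of $F_{\phi^0}[w]=t^{-1}\gd$ in $U$. One then compares $u^0$ (a subsolution of $F_{\phi^0}[\cdot]=0$) with $w$ via \erf{CP'} (through Proposition~\ref{exist1}(ii) when $\gl=0$), obtaining $\sup_U(u^0-w)\le\max_{\partial U}(u^0-w)$, which contradicts the strict interior maximum of $t(u^0-w)=u^t-\psi$ at $z$. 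No doubling of variables and no modulus on $a,b$ are needed; the comparison hypothesis absorbs all of that.
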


Formally, it is easy to see that $\cG_0$ is convex. Indeed, for any 
$(\phi^0,u^0), (\phi^1,u^1)\in\cF_0$ such that $u^0, u^1 \in C^2(\t)$, 
setting $\phi^t:=t\phi^1+(1-t)\phi^0$, $u^t:=t u^1+(1-t)u^0$ for $t\in[0,1]$, 
we have in the classical sense that
\[
F_{\phi^t}[u^t]\le (1-t)F_{\phi^0}[u^0]+tF_{\phi^1}[u^1]\le 0. 
\]
We give a rigorous proof in the next subsection (see the proof of Lemma \ref{dmmc-thm1}).

Let ${\cG_0}'\subset\cR$ denote the dual cone of $\cG_0$, that is, we set 
\[
{\cG_0}':=\left\{\mu\in\cR\mid
\lan \mu,\phi\ran\geq 0 \ \ \text{ for all }
\phi\in\cG_0\right\}.
\] 

\begin{prop} Let $\mu\in{\cG_0}'$. Then,  
\begin{equation}\label{closing}
\int_{\b}\cL_\ga\psi(x)\mu(dxd\ga)=0 
\quad\text{for all} \ \psi\in C^2(\t),  
\end{equation}
where $\cL_\ga\mid C^2(\T^n)\to C(\T^n)$, with $\ga\in\cA$, denotes the linear operator given by 
\[
\cL_\ga\psi(x)=-\tr a(x,\ga)D^2\psi(x)-b(x,\ga)\cdot D\psi(x).
\]
\end{prop}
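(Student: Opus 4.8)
The plan is to exploit the fact that $\mathcal{G}_0$ is a convex cone with vertex at the origin (Lemma \ref{thma2}), which means that for $\mu\in\mathcal{G}_0'$ we have $\langle\mu,\phi\rangle\ge 0$ whenever $\phi\in\mathcal{G}_0$, and hence the linear functional $\phi\mapsto\langle\mu,\phi\rangle$ vanishes on any \emph{linear subspace} contained in $\mathcal{G}_0$. So the key step is to identify, for each fixed $\psi\in C^2(\t)$, a small linear subspace (a line, really) inside $\mathcal{G}_0$ along which we can read off \eqref{closing}. Concretely, I would look at the function $(x,\ga)\mapsto \mathcal{L}_\ga\psi(x)$, which is continuous on $\b$ since $a$, $b$ are continuous and $\psi\in C^2(\t)$, and show that both $t\,\mathcal{L}_\ga\psi$ and $-t\,\mathcal{L}_\ga\psi$ belong to $\mathcal{G}_0$ for every $t\ge 0$; applying the cone inequality to both gives $\langle\mu,\mathcal{L}_\cdot\psi\rangle\ge 0$ and $\le 0$, i.e. equality to zero, which is exactly \eqref{closing}.

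To see that $\pm\mathcal{L}_\ga\psi(x)\in\mathcal{G}_0$, recall that $\phi\in\mathcal{G}_0$ iff $(\phi,u)\in\mathcal{F}_0$ for some $u$, i.e. iff there is a subsolution $u$ of $F_\phi[u]=0$ in $\t$. I would produce such a $u$ explicitly by using $\psi$ itself (possibly shifted by a constant), but we must be careful: $\mathcal{F}_0$ and $\mathcal{G}_0$ are defined with respect to the \emph{sup/max over $\cA$} structure of $F_\phi$, so it is not simply true that the single test function $\psi$ is a subsolution of $F_\phi[\psi]=0$ with $\phi=\mathcal{L}_\cdot\psi$ — that would require $-\tr a(x,\ga)D^2\psi-b(x,\ga)\cdot D\psi-\mathcal{L}_\ga\psi(x)\le 0$ for all $\ga$, and by definition of $\mathcal{L}_\ga$ the left side equals $0$ for every $\ga$, so in fact $F_{\mathcal{L}_\cdot\psi}[\psi]\equiv 0$, i.e. $\psi$ is a (classical, hence viscosity) solution. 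Thus $(\mathcal{L}_\cdot\psi,\psi)\in\mathcal{F}_0$ and $\mathcal{L}_\cdot\psi\in\mathcal{G}_0$. The same computation with $\psi$ replaced by $-\psi$ shows $-\mathcal{L}_\cdot\psi=\mathcal{L}_\cdot(-\psi)\in\mathcal{G}_0$. (More directly: since $\mathcal{G}_0$ is a cone, $t\,\mathcal{L}_\cdot\psi\in\mathcal{G}_0$ for $t\ge 0$, and the line through the origin in the direction $\mathcal{L}_\cdot\psi$ lies in $\mathcal{G}_0$ once both signs are in.)

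Having established $\pm\mathcal{L}_\cdot\psi\in\mathcal{G}_0$, the argument concludes immediately: by definition of the dual cone, $\langle\mu,\mathcal{L}_\cdot\psi\rangle\ge 0$ and $\langle\mu,-\mathcal{L}_\cdot\psi\rangle\ge 0$, hence $\langle\mu,\mathcal{L}_\cdot\psi\rangle=0$, which unwinding the pairing notation is precisely
\[
\int_{\b}\mathcal{L}_\ga\psi(x)\,\mu(dxd\ga)=0.
\]
Since $\psi\in C^2(\t)$ was arbitrary, \eqref{closing} follows.

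The only genuine subtlety, and the step I would be most careful about, is the verification that $\mathcal{L}_\cdot\psi$ is an \emph{element of $C(\b)$} and that $\psi$ really is a viscosity (indeed classical) solution of $F_{\mathcal{L}_\cdot\psi}[\psi]=0$ — this hinges on $\b$ being compact (so that the $\max$ over $\cA$ in the definition of $F_\phi$ is attained and $F_\phi$ is continuous, as noted in the text right after the definition of $F_\phi$) and on the continuity of $a$, $b$ together with $\psi\in C^2$. Everything else is a formal manipulation with the dual-cone inequality. I should also note that this proposition does not actually need the full strength of \erf{CP'}; it only uses that $\mathcal{G}_0$ contains the one-dimensional subspaces $\mathbb{R}\,\mathcal{L}_\cdot\psi$, which is elementary. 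But stating it under \erf{F1}, \erf{F2}, \erf{CP'} is harmless since those are the ambient hypotheses of the subsection.
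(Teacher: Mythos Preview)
Your argument is correct and essentially identical to the paper's: define $\phi(x,\ga)=\cL_\ga\psi(x)$, observe that $F_\phi[\psi]=0$ classically so $(\phi,\psi)\in\cF_0$, do the same with $-\psi$ to get $(-\phi,-\psi)\in\cF_0$, and conclude $\lan\mu,\pm\phi\ran\ge 0$ from the dual-cone definition. Your additional remarks on continuity of $\cL_\cdot\psi$ and on \erf{CP'} not being essential here are accurate but not part of the paper's proof.
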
 
\begin{proof} Let $\psi\in C^2(\t)$. 
If we set $\phi(x,\ga)=\cL_\ga\psi(x)$ for $(x,\ga)\in\b$, then 
we have $F_\phi[\psi]=0$ in $\t$ in the classical sense 
and, hence, 
$(\phi,\psi)\in\cF_0$. Similarly, we have $(-\phi,-\psi)\in\cF_0$. 
These imply that $\pm \phi \in \cG_0$.
Thus, we have $\lan \mu,\pm\phi\ran\geq 0$ and, therefore, $\lan \mu,\phi \ran=0$.     
\end{proof}

The main claim in this subsection is the following. 
\begin{thm} \label{thm2-sec2} Assume \erf{F1}, \erf{F2}, \erf{CP'} and \erf{Z}. Then, 
\begin{equation}\label{rep1-sec2}
\min_{\mu\in\cP\,\cap\,{\cG_0}'}\int_{\b} L(x,\ga)\mu(dxd\ga)=0. 
\end{equation}
\end{thm}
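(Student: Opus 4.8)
The equality \eqref{rep1-sec2} has two parts: that the infimum of $\lan\mu,L\ran$ over $\mu\in\cP\cap{\cG_0}'$ is $\geq 0$, and that this infimum is in fact $0$ and is attained. I would establish the inequality $\lan\mu,L\ran\geq 0$ for every admissible $\mu$ first, and then construct a minimizing sequence whose weak limit both lies in $\cP\cap{\cG_0}'$ and achieves the value $0$.

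\textbf{Step 1: the inequality $\lan\mu,L\ran\geq 0$ for all $\mu\in\cP\cap{\cG_0}'$.} Let $(w,0)$ be a solution of \eqref{EP} guaranteed by \eqref{Z}. Then $w$ is a subsolution of $F[w]=0$ in $\t$. I want to produce from this a function $\phi\in\cG_0$ with $\phi\leq L$ pointwise on $\b$; integrating against $\mu$ then gives $\lan\mu,L\ran\geq\lan\mu,\phi\ran\geq 0$. The natural candidate is $\phi:=L$ itself: since $F_L=F$ by \eqref{F1}, the pair $(L,w)\in\cF_0$ directly, hence $L\in\cG_0$, and $\lan\mu,L\ran\geq 0$ is immediate for $\mu\in{\cG_0}'$. (One must be slightly careful that $w$ is only a viscosity subsolution, not $C^2$, but $\cF_0$ is defined via viscosity subsolutions, so $(L,w)\in\cF_0$ holds as stated.) This gives $\inf_{\mu\in\cP\cap{\cG_0}'}\lan\mu,L\ran\geq 0$.

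\textbf{Step 2: existence of a minimizer attaining $0$.} Here is the crux. I would argue by duality/contradiction: suppose $\lan\mu,L\ran>0$ for every $\mu\in\cP\cap{\cG_0}'$, equivalently (after the scaling invariance of the cone ${\cG_0}'$) that there is no $\mu\in{\cG_0}'$ with $\mu\neq 0$ and $\lan\mu,L\ran\leq 0$. By a Hahn--Banach separation argument in $\cR=C(\b)^*$ with its weak-$*$ topology — separating the weak-$*$ closed convex cone ${\cG_0}'$ from the point $L$ viewed through the bipolar theorem — this would force $-L$ to lie in the bipolar cone $({\cG_0}')' = \lbar{\cG_0}$ (the weak-$*$/norm closure of the convex cone $\cG_0$, using Lemma \ref{thma2} that $\cG_0$ is a convex cone). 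Thus there would be $\phi_k\in\cG_0$ with $\phi_k\to -L$ uniformly, i.e.\ subsolutions $u_k$ of $F_{\phi_k}[u_k]=0$; since $\phi_k\geq -L-\gep_k$, each $u_k$ is a subsolution of $F_{-L}[u_k] = -(F[-u_k]$-type expression$)\ldots$ — more precisely of $F_{-L-\gep_k}[u_k]=0$, and letting $k\to\infty$ via the stability of viscosity subsolutions (using \eqref{EC}-type bounds and normalizing $\min u_k=0$, with \eqref{CP'} giving uniform bounds) produces a subsolution $u_*$ of $F_{-L}[u_*]=0$, i.e.\ of $\max_{\ga}(-\tr aD^2u_* - b\cdot Du_* + L)\leq 0$, hence $-\tr aD^2u_* - b\cdot Du_* \leq -L$ for all $\ga$, so by \eqref{F1} $F[-u_*]\geq$ \ldots this yields $-F[u_*]$ bounds that contradict $c=0$ being the critical value (via Proposition \ref{exist1}(ii) or the characterization of $c$). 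The contradiction shows the infimum is $0$.

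\textbf{Step 3: attainment.} Finally, take a minimizing sequence $\mu_j\in\cP\cap{\cG_0}'$ with $\lan\mu_j,L\ran\to 0$. By the Prokhorov theorem (using compactness of $\cA$ from \eqref{F3}), pass to a subsequence converging weakly to some $\mu\in\cP$. The cone ${\cG_0}'$ is closed under weak convergence since each defining inequality $\lan\cdot,\phi\ran\geq 0$, $\phi\in\cG_0\subset C(\b)$, passes to the limit; hence $\mu\in\cP\cap{\cG_0}'$. Since $L\in C(\b)$ is bounded ($\b$ compact), $\lan\mu_j,L\ran\to\lan\mu,L\ran=0$, so the minimum is attained. \textbf{The main obstacle} I anticipate is Step 2 — making the Hahn--Banach/bipolar argument rigorous in the weak-$*$ topology (the cone $\cG_0$ need not be weak-$*$ closed, so one works with its closure and must check the limiting subsolution argument goes through, which is where \eqref{CP'}, \eqref{EC}, and the precise form \eqref{F1} all enter) and converting "$-L\in\lbar{\cG_0}$" into a genuine contradiction with the criticality $c=0$ rather than merely a near-miss.
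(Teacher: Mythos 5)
Your Steps 1 and 3 are fine and coincide with the paper's argument: $(L,w)\in\cF_0$ by \eqref{Z} gives $\lan\mu,L\ran\geq 0$ for every $\mu\in\cP\cap{\cG_0}'$, and attainment follows from compactness of $\cP\cap{\cG_0}'$ in the weak topology of measures. The genuine gap is Step 2, and it is not just a matter of rigor: the duality is set up in the wrong space and pointed in the wrong direction. You propose to ``separate the cone ${\cG_0}'$ from the point $L$,'' but $L$ lives in $C(\b)$ while ${\cG_0}'$ lives in $\cR$, and the bipolar theorem applied to the hypothesis ``$\lan\mu,L\ran\geq 0$ for all $\mu\in{\cG_0}'$'' only returns $L\in\lbar{\cG_0}$ (which you already know, since $L\in\cG_0$); it can never produce $-L\in\lbar{\cG_0}$. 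Indeed $-L\in\lbar{\cG_0}$ is neither a consequence of the contradiction hypothesis nor, by itself, contradictory (take $L\equiv 0$: then $c=0$, $-L=0\in\cG_0$, and \eqref{rep1-sec2} holds). The correct dual statement of ``$\inf_{\mu\in\cP\cap{\cG_0}'}\lan\mu,L\ran\geq\ep>0$'' is the existence of some $\phi\in\cG_0$ with $L-\phi\geq\ep'>0$ \emph{pointwise} on $\b$, i.e.\ $\phi\leq L-\ep'$, and it is this inequality that yields the contradiction: it gives $F\leq F_\phi-\ep'$ on $\t\tim\R^n\tim\bS^n$, so the subsolution $u$ of \eqref{EP'} associated with $\phi$ satisfies $F[u]\leq-\ep'$ in $\t$, and then Proposition \ref{exist1}(ii) (applied to $u+C$ and a solution $w$ of $F[w]=0$, for every constant $C$) is impossible. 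Your ``near-miss'' worry at the end is exactly this point, and your route does not resolve it.

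Two further defects in Step 2 would remain even if the separation were repaired in your form. First, the compactness/stability passage $u_k\to u_*$ has no basis: \eqref{EC} concerns only the discounted solutions $v^\gl$, not arbitrary subsolutions of $F_{\phi_k}[u_k]\leq 0$, and in the degenerate setting (e.g.\ $a\equiv 0$, $b\equiv 0$) such subsolutions are completely arbitrary functions, so no normalization makes them precompact. Second, the claimed final contradiction from ``$F_{-L}[u_*]\leq 0$'' is never actually derived, and as noted above it cannot be in general. The paper avoids all of this by a different, finite-dimensional-free mechanism: write the constrained infimum as $\inf_{\mu\in\cP}\sup_{f\in\cG_0}\lan\mu,L-f\ran$ using that $\cG_0$ is a convex cone with vertex at the origin (so $\inf_{f\in\cG_0}\lan\mu,f\ran$ is $0$ or $-\infty$), invoke Sion's minimax theorem on the compact convex set $\cP$ to exchange $\inf$ and $\sup$, and then test the inner infimum with Dirac measures $\gd_{(x,\ga)}$ to obtain exactly the pointwise inequality $L-\phi\geq\ep$ described above. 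If you want a Hahn--Banach proof instead of Sion, you must separate, in $C(\b)$, the convex set $\{L-\ep-\phi\mid\phi\in\cG_0\}$ from the open cone of positive functions; that reproduces the same dual object and the same conclusion, but it is the opposite of the bipolar statement you wrote.
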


We give a proof of Theorem \ref{thm2-sec2} in 
the next subsection (see the proof of Theorem \ref{thm3-sec2}). 
Note that the minimizing problem \eqref{rep1-sec2} 
has a minimizer over the convex cone $\cP\,\cap\,{\cG_0}'$,  
since $L\in C(\b)$ and  $\cP\,\cap\,{\cG_0}'$ is compact in the topology of weak convergence in measure. 

In the general case where \erf{Z} is not assumed, 
the proposition below, which is a direct consequence of Theorem \ref{thm2-sec2}, gives a formula for the critical value $c$.  

\begin{cor} \label{cor1} Assume \erf{F1}, \erf{F2}, \erf{CP'}, and 
that \erf{EP} has a solution $(u,c)\in C(\t)\tim\R$. Then, 
\begin{equation}\label{cor1-1}
c=-\min_{\mu\in\cP\,\cap\,{\cG_0}'}\int_{\b} L(x,\ga)\mu(dxd\ga). 
\end{equation}
\end{cor}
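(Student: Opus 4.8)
\textbf{Proof proposal for Corollary \ref{cor1}.}

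The plan is to reduce the general statement to the normalized case already handled in Theorem \ref{thm2-sec2}, where \erf{Z} is in force. First I would set $\widetilde F := F - c$ on $\t\tim\R^n\tim\bS^n$, which, by \erf{F1}, corresponds to replacing the running cost $L(x,\ga)$ by $\widetilde L(x,\ga) := L(x,\ga) + c$, while leaving $a$ and $b$ untouched. Since $(u,c)$ solves \erf{EP}, the pair $(u,0)$ solves the ergodic problem associated with $\widetilde F$, so condition \erf{Z} holds for the modified equation. Crucially, because the definition of $\cF_0$ (and hence of $\cG_0$, of ${\cG_0}'$, and of $\cP\cap{\cG_0}'$) depends only on $a$ and $b$ and not on $L$ — as noted right after the definition of $\cF_0$ — these objects are unchanged under the passage from $F$ to $\widetilde F$.

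Next I would apply Theorem \ref{thm2-sec2} to $\widetilde F$: it yields
\[
\min_{\mu\in\cP\,\cap\,{\cG_0}'}\int_{\b}\widetilde L(x,\ga)\,\mu(dxd\ga)=0.
\]
Then, for every $\mu\in\cP\cap{\cG_0}'$, since $\mu$ is a probability measure we have $\int_{\b}\widetilde L\,d\mu=\int_{\b}L\,d\mu+c$, so the displayed identity becomes $\min_{\mu\in\cP\cap{\cG_0}'}\big(\int_{\b}L\,d\mu+c\big)=0$, i.e. $c+\min_{\mu\in\cP\cap{\cG_0}'}\int_{\b}L\,d\mu=0$, which is exactly \erf{cor1-1}. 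The existence of a minimizer carries over from the remark following Theorem \ref{thm2-sec2}, using $L\in C(\b)$ and the weak-$*$ compactness of $\cP\cap{\cG_0}'$.

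There is essentially no hard step here; the only point requiring a line of care is the bookkeeping observation that shifting $F$ by a constant does not alter $\cG_0$ or its dual cone, so that the minimization in \erf{cor1-1} is taken over the same set as in \eqref{rep1-sec2}. Once that is recorded, the identity $\int_{\b}(L+c)\,d\mu=\int_{\b}L\,d\mu+c$ for $\mu\in\cP$ closes the argument.
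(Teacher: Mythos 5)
Your argument is correct and is exactly the paper's intended route: the paper treats Corollary \ref{cor1} as a direct consequence of Theorem \ref{thm2-sec2} via the shift $L\mapsto L+c$ (equivalently $F\mapsto F-c$), precisely as spelled out in the proof of Corollary \ref{cor2}, using that $\cF_0$, $\cG_0$, ${\cG_0}'$ depend only on $a$ and $b$ and that $\int_{\b}(L+c)\,d\mu=\int_{\b}L\,d\mu+c$ for probability measures $\mu$. No gaps.
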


The variational formula above   
for the additive eigenvalue $c$ (see also Corollary \ref{cor2} below) 
can be considered as an additive version of a variational 
formula \cite{DV1, DV2, Ar} for the principal ``multiplicative''  eigenvalue for second-order elliptic operators.

\begin{definition}[Viscosity Mather measure]
We define $\cM_{L,0}\subset\cP$ as the set of all measures $\mu\in \cP\,\cap\,{\cG_0}'$ which solves minimizing problem \eqref{rep1-sec2}. We call any measure $\mu\in\cM_{L,0}$ 
a \emph{viscosity Mather measure}.  
\end{definition}

\begin{remark}\ 
\begin{itemize}
\item[{\rm(i)}] 
The variational problem \eqref{rep1-sec2} can be considered as a natural extension of Mather's minimizing problem 
(see \cite{Mat}) for second-order PDEs. 
This kind of extension was first addressed in \cite{Go}. 
\item[{\rm(ii)}] 
The identity \eqref{closing} corresponds to the closedness 
property for the original Mather measures. A measure satisfying \eqref{closing} is sometimes 
called a \textit{closing measure} or a \textit{holonomic measure}. 
This is a ``weak" flow invariance condition for the associated Euler-Lagrange flow for the first-order Hamilton-Jacobi equations 
(see \cite{Man}). 
\end{itemize}
\end{remark}

\subsection{Discounted Mather measures}  \label{dmmc}
It is convenient in what follows that the discount factor $\gl$ in \erf{DP} or \erf{DP'} can be taken as zero.
For $\gl\geq 0$, we 
define $\cF_\gl\subset C(\T^n\tim\cA)\tim C(\t)$ by 
\[
\cF_\gl:=\{(\phi,u)\in C(\b)\tim C(\t)\mid u \ \text{is a subsolution of} \ \erf{DP'}\}. 
\]
\begin{lem}\label{dmmc-thm2} Let $\gl\in[0,\,\infty)$. 
Under hypotheses \erf{F1}, \erf{F2} and 
\erf{CP'}, the set $\cF_\gl$ is convex.  
\end{lem}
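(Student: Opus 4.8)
The plan is to prove that $\cF_\gl$ is a convex subset of $C(\b)\tim C(\t)$ for each fixed $\gl\ge 0$. The heuristic is already indicated in the discussion following Lemma \ref{thma2}: if $u^0,u^1$ were $C^2$ classical subsolutions corresponding to data $\phi^0,\phi^1$, then convexity of $(p,X)\mapsto F_\phi(x,p,X)$ together with the linearity of the map $\phi\mapsto F_\phi$ in its last slot would give, for $t\in[0,1]$ and $\phi^t:=t\phi^1+(1-t)\phi^0$, $u^t:=tu^1+(1-t)u^0$, the pointwise bound
\[
\gl u^t + F_{\phi^t}[u^t]\le t\bigl(\gl u^1+F_{\phi^1}[u^1]\bigr)+(1-t)\bigl(\gl u^0+F_{\phi^0}[u^0]\bigr)\le 0,
\]
so $(\phi^t,u^t)\in\cF_\gl$. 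The real work is to make this rigorous at the level of viscosity subsolutions, since we cannot assume $u^0,u^1\in C^2(\t)$.

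First I would record the two structural facts being used: for each fixed $(x,p,X)$ the function $\phi\mapsto F_\phi(x,p,X)=\max_{\ga\in\cA}(-\tr a(x,\ga)X-b(x,\ga)\cdot p-\phi(x,\ga))$ satisfies $F_{t\phi^1+(1-t)\phi^0}(x,p,X)\le tF_{\phi^1}(x,p,X)+(1-t)F_{\phi^0}(x,p,X)$ (subadditivity of $\max$ under convex combinations), and for each fixed $x$ the map $(p,X)\mapsto F_\phi(x,p,X)$ is convex, being a sup of affine functions; both are immediate from \erf{F1}. Then I would invoke the standard sum/convex-combination principle for viscosity subsolutions of degenerate elliptic equations: namely, if $u^0$ is a subsolution of $\gl u+F_{\phi^0}[u]=0$ and $u^1$ is a subsolution of $\gl u+F_{\phi^1}[u]=0$, then for $t\in[0,1]$ the function $u^t=(1-t)u^0+tu^1$ is a subsolution of $\gl u + G[u]=0$ in $\t$, where $G(x,p,X):=\sup\{(1-t)F_{\phi^0}(x,q,Y)+tF_{\phi^1}(x,r,Z): (1-t)q+tr=p,\ (1-t)Y+tZ=X\}$; this follows by a routine doubling-of-variables/sup-convolution argument or, more cleanly, from the convexity-preserving lemma for sums of viscosity subsolutions (e.g.\ the fact that a convex combination of subsolutions of convex equations is a subsolution of the "inf-convolved" equation). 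Combining this with the pointwise inequality $G(x,p,X)\le F_{\phi^t}(x,p,X)$ — which is exactly what convexity in $(p,X)$ plus the $\max$-inequality above yield when we specialize to $q=r=p$, $Y=Z=X$ is not quite enough, so one argues: for any admissible split, $(1-t)F_{\phi^0}(x,q,Y)+tF_{\phi^1}(x,r,Z)\ge (1-t)F_{\phi^0}(x,p,X)+tF_{\phi^1}(x,p,X)$? No — the inequality goes the wrong way, so instead I would bound $G$ directly using $a,b$ linear in $(X,p)$: each term $-\tr a(x,\ga)((1-t)Y+tZ)-b(x,\ga)\cdot((1-t)q+tr)-\phi^t(x,\ga)$ splits exactly, and taking $\max_\ga$ after the split gives $G(x,p,X)\le F_{\phi^t}(x,p,X)$. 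Hence $u^t$ is a subsolution of $\gl u+F_{\phi^t}[u]=0$, i.e.\ $(\phi^t,u^t)\in\cF_\gl$, proving convexity.

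The main obstacle is the viscosity-solution bookkeeping in the middle step: one must be careful that the convex-combination-of-subsolutions principle is valid for the fully nonlinear, merely degenerate elliptic operators at hand, with no uniform ellipticity and with $\cA$ compact (so that $F_\phi$ is genuinely continuous). I expect this is where the proof references \erf{CP'} — likely only to ensure the solutions in question are well-posed/continuous, not to run the convexity argument itself, which is purely local. In the write-up I would either cite the standard reference for "convex combinations of viscosity subsolutions of a convex degenerate elliptic equation are subsolutions" or give the short doubling-of-variables proof; since the paper promises a rigorous treatment in the proof of Lemma \ref{dmmc-thm1}, I would mirror that argument, noting that the case $\gl=0$ is identical (the term $\gl u^t$ is linear in $t$, hence causes no difficulty) and that $\cF_0$ of the previous subsection is the special case $\gl=0$.
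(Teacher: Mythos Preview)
Your proposal has a genuine gap. The ``standard sum/convex-combination principle'' you want to invoke---that a convex combination of viscosity subsolutions of convex degenerate elliptic equations is again a subsolution---is \emph{not} available under the bare hypotheses \erf{F1}, \erf{F2}, \erf{CP'}. Both the doubling-of-variables route and the sup-convolution route require quantitative control on the $x$-dependence of the coefficients (a structure condition of the type \cite[(3.14)]{CrIsLi}, e.g.\ $a=\gs^t\gs$ with $\gs$ Lipschitz), because after doubling one must compare $F_{\phi^i}(x,\cdot,\cdot)$ with $F_{\phi^i}(y,\cdot,\cdot)$ at nearby points $x,y$, and the error involves $|X|$ which is unbounded. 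Here only continuity of $a,b$ and the abstract comparison principle \erf{CP'} are assumed; these do not imply a structure condition. (Your algebra is also tangled: with $G$ defined as a $\sup$, $u^t$ is certainly not a subsolution of $\gl u+G[u]=0$; with $G$ defined as an $\inf$, the linearity computation gives $F_{\phi^t}\le G$, which is the right direction, but then the claim that $u^t$ is a subsolution of $\gl u+G[u]=0$ is exactly the missing doubling step.)

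Your speculation that \erf{CP'} is used only for well-posedness is wrong; it is the engine of the paper's argument. The paper argues by contradiction: suppose $\psi\in C^2$ touches $u^t$ from above at $z$ with a strict maximum and $\gl u^t(z)+F_{\phi^t}[\psi](z)>0$, so $\gl\psi+F_{\phi^t}[\psi]>\gd$ on some neighborhood $U$. Set $w:=t^{-1}\psi-t^{-1}(1-t)u^1$. One then checks, using only the \emph{subsolution} property of $u^1$ (tested against functions of the form $(1-t)^{-1}\psi-t(1-t)^{-1}\eta$ where $\eta$ is a test function for $w$) together with the algebraic identity $F_{\phi^t}[\psi]\le tF_{\phi^0}[\eta]+(1-t)F_{\phi^1}[(1-t)^{-1}\psi-t(1-t)^{-1}\eta]$, that $w$ is a viscosity \emph{supersolution} of $\gl w+F_{\phi^0}[w]=t^{-1}\gd$ in $U$. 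Since $u^0$ is a subsolution of $\gl u+F_{\phi^0}[u]=0$ and $t(u^0-w)=u^t-\psi<0$ on $\pl U$, \erf{CP'} (supplemented by the trick of Proposition~\ref{exist1}(ii) when $\gl=0$) forces $\sup_U(u^0-w)\le\max_{\pl U}(u^0-w)<0$, contradicting $(u^0-w)(z)=0$. Thus the convexity of $\cF_\gl$ is genuinely nonlocal and rests on comparison, not on a pointwise sum rule.
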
  
  
\begin{proof} 
Let $(\phi^0,u^0),\,(\phi^1,u^1)\in\cF_\gl$ and $t\in(0,\,1)$, and  
set $\phi^t:=t\phi^1+(1-t)\phi^0$ and $u^t:=tu^1+(1-t)u^0$. 
We prove that $\gl u^t+F_{\phi^t}[u^t]\leq 0$ in $\t$ in the viscosity sense.

Let $\psi\in C^2(\t)$ and $z\in\t$ be such that $u^t-\psi$ has a strict maximum at 
$z$. 
We may assume that $\max_{\t}(u^t-\psi)=0$. Then, $u^t(z)=\psi(z)$. 
Suppose that $\gl u^t(z)+F_{\phi^t}[\psi](z)>0$, and show a contradiction. 

We choose $\gd>0$ and an open neighborhood $U$ of $z$ 
so that 
\begin{equation}\label{super-psi}
\gl \psi(x)+F_{\phi^t}[\psi](x)>\gd \ \ \text{ for all }x\in U.  
\end{equation}
Set 
\[
w(x)=-t^{-1}(1-t)u^1(x)+t^{-1}\psi(x) \ \ \text{ for }x\in\t,
\]
and intend to show that $w$ is a supersolution of 
\begin{equation}\label{superU}
\gl w+F_{\phi^0}[w]= t^{-1}\gd \ \ \ \text{ in }U. 
\end{equation}
Once this is done, we easily get a contradiction. Indeed, 
we apply the comparison principle \erf{CP'} to $u^0$ and $w$, 
to find that 
\[
\sup_{U}(u^0-w)\leq \max_{\pl U}(u^0-w), 
\]
but this is a contradiction since $t(u^0-w)=u^t-\psi$ attains a strict maximum at $z\in U$. 

To prove the viscosity property \erf{superU} of $w$, we fix $\eta\in C^2(U)$ 
and $y\in U$, and assume that $w-\eta$ takes a minimum at $y$. 
This implies that 
the function    
\[ 
-t(1-t)^{-1}(w-\eta)=u^1-(1-t)^{-1}\psi+t(1-t)^{-1}\eta
\]
takes a maximum at $y$. By the viscosity property of $u^1$, we get 
\begin{equation}\label{psi-eta}
\gl u^1(y)+F_{\phi^1}[(1-t)^{-1}\psi-t(1-t)^{-1}\eta](y)\leq 0. 
\end{equation}

We compute that for any $\ga\in\cA$,
\[\begin{aligned}
\cL_\ga \psi(y)&-t\phi^0(y,\ga)-(1-t)\phi^1(y,\ga)
\\&\,=t\left(\cL_\ga\eta(y)-\phi^0(y,\ga)\right)
+\cL_\ga\psi(y)-t\cL_\ga\eta(y)-(1-t)\phi^1(y,\ga)
\\&\,=t\left(\cL_\ga\eta(y)-\phi^0(y,\ga)\right)
\\&\,\quad+(1-t)\left\{(1-t)^{-1}\cL_\ga\psi(y)-t(1-t)^{-1}\cL_\ga\eta(y)
-\phi^1(y,\ga)\right\},
\end{aligned}\]
from which we deduce that 
\[
F_{\phi^t}[\psi](y)
\leq tF_{\phi^0}[\eta](y)+(1-t)
F_{\phi^1}[(1-t)^{-1}\psi-t(1-t)^{-1}\eta](y).
\]
Thus, using \erf{super-psi}, we get
\[\begin{aligned}
\gd&\leq \gl \psi(y)+tF_{\phi^0}[\eta](y)
+(1-t)F_{\phi^1}[(1-t)^{-1}\psi-t(1-t)^{-1}\eta](y)
\\&=\gl (tw(y)+(1-t)u^1(y))+ tF_{\phi^0}[\eta](y)
+(1-t)F_{\phi^1}[(1-t)^{-1}\psi-t(1-t)^{-1}\eta](y)
\\&=t\left\{\gl w(y)+F_{\phi^0}[\eta](y)\right\}
+(1-t)\left\{\gl u^1(y)+F_{\phi^1}[(1-t)^{-1}\psi-t(1-t)^{-1}\eta](y)\right\}.
\end{aligned}
\]
Combining this with \erf{psi-eta} yields
\[
t^{-1}\gd\leq \gl w(y)+F_{\phi^0}[\eta](y). 
\] 
This shows that $w$ is a supersolution of \erf{superU}, which completes the proof. 
\end{proof}

For $(z,\gl)\in\t\tim[0,\,\infty)$, 
we define $\cG_{z,\gl}\subset C(\b)$ by 
\[
\cG_{z,\gl}:=\{\phi-\gl u(z)\mid (\phi,u)\in\cF_\gl \ \text{for some} \ u\in C(\t)\}.
\]  
Note that $\cG_0=\cG_{z,0}$ for all $z\in\t$.
\begin{lem} \label{dmmc-thm1} Let $\gl\in[0,\,\infty)$. 
Under hypotheses \erf{F1}, \erf{F2} and 
\erf{CP'}, 
the set $\cG_{z,\gl}$ is a convex cone in $C(\T^n\tim\cA)$,
with vertex at the origin. 
\end{lem}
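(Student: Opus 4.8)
The plan is to establish the two closure properties separately: first that $\cG_{z,\gl}$ is closed under nonnegative scaling (so it is a cone with vertex at the origin), and then that it is convex. Throughout I will fix $z\in\t$ and $\gl\in[0,\infty)$.

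For the cone property, I would start from $\phi-\gl u(z)\in\cG_{z,\gl}$ with $(\phi,u)\in\cF_\gl$, and take $s\ge 0$. The point is that $(s\phi, su)\in\cF_\gl$: by homogeneity of $(p,X)\mapsto F_{s\phi}(x,sp,sX)=sF_\phi(x,p,X)$ in the classical case, and in the viscosity sense by the standard fact that multiplying a subsolution by a nonnegative constant, together with the corresponding scaling of the data $\phi$, again yields a subsolution (testing $su-\psi$ at a maximum against $s$ times a test function for $u$). Then $s(\phi-\gl u(z)) = s\phi - \gl (su)(z)\in\cG_{z,\gl}$. The case $s=0$ is covered since $(0,0)\in\cF_\gl$, so $0\in\cG_{z,\gl}$, giving the vertex at the origin.

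For convexity, I would invoke Lemma \ref{dmmc-thm2}: $\cF_\gl$ is convex. Given $\phi^i-\gl u^i(z)\in\cG_{z,\gl}$ for $i=0,1$ with $(\phi^i,u^i)\in\cF_\gl$, and $t\in[0,1]$, set $\phi^t:=t\phi^1+(1-t)\phi^0$ and $u^t:=tu^1+(1-t)u^0$. By Lemma \ref{dmmc-thm2}, $(\phi^t,u^t)\in\cF_\gl$, hence
\[
t\bigl(\phi^1-\gl u^1(z)\bigr)+(1-t)\bigl(\phi^0-\gl u^0(z)\bigr)
= \phi^t - \gl u^t(z) \in \cG_{z,\gl}.
\]
Finally $\cG_{z,\gl}\subset C(\b)=C(\T^n\tim\cA)$ holds by construction since each $\phi\in C(\b)$ and the shift by the constant $\gl u(z)$ stays in $C(\b)$. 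Combining the two properties gives the claim; note that since $\cG_0=\cG_{z,0}$, this also yields Lemma \ref{thma2} as the special case $\gl=0$, making rigorous the formal convexity argument sketched after that lemma.

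The routine parts are the convexity (which is essentially a citation of Lemma \ref{dmmc-thm2}) and the $s=0$ case. The only genuine point requiring care is the scaling step for the cone property: one must verify that scaling a viscosity subsolution of $\gl u + F_\phi[u]=0$ by $s>0$, while simultaneously scaling $\phi$ to $s\phi$, produces a viscosity subsolution of $\gl u + F_{s\phi}[u]=0$. This is immediate from the positive $1$-homogeneity of $X\mapsto F_\phi(x,p,X)$ (and of $p$ and $\phi$) in the sup defining $F_\phi$, combined with the fact that $su-\psi$ has a max at $z$ iff $u - s^{-1}\psi$ has a max at $z$; I expect this to be the main (though minor) obstacle, and I would state it as a one-line remark rather than belabor it.
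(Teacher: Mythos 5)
Your proposal is correct and follows essentially the same route as the paper: convexity is quoted from Lemma \ref{dmmc-thm2}, and the cone property is obtained by showing that $(s\phi,su)\in\cF_\gl$ for $s\ge0$, which is exactly the scaling argument the paper uses (justified there by the positive homogeneity $F_{s\phi}(x,sp,sX)=sF_\phi(x,p,X)$ in the viscosity sense). Your handling of the $s=0$ case and the rescaled test function is the same one-line verification the paper leaves implicit.
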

We note that the lemma above includes Lemma \ref{thma2} as the special case $\gl=0$. 

\begin{proof} 
The convexity of $\cG_{z,\gl}$ is an immediate consequence 
of Lemma \ref{dmmc-thm2}.  
To see the cone property of $\cG_{z,\gl}$, 
it is enough to show that $\cF_\gl$ is a cone with vertex at the origin. 
Fix any $(\phi,u)\in\cF_\gl$ and $t\geq 0$, and observe that 
$w:=tu$ satisfies, at least in a formal level,  
\[
\gl w+\cL_\ga w =t(\gl u+\cL_\ga u)
\le
t(\gl u+F_\phi[u]+\phi)
\leq t\phi(x,\ga) 
\]
for all $(x,\ga)\in\b$, 
which can be stated as $\gl (tu)+F_{t\phi}[tu]\leq 0$ in $\t$. 
Thus, $(t\phi,tu)\in\cF_\gl$. 
This computation is easily justified in the viscosity sense 
and ensures that $\cF_\gl$ is a cone with vertex at the origin.
\end{proof} 

Our choice of the term, the viscosity Mather measures, comes from the 
fact that the stability property of viscosity solutions 
is crucial in establishing Lemma \ref{dmmc-thm1}.

Let ${\cG_{z,\gl}}{'}$ denote the dual cone of $\cG_{z,\gl}$, that is, 
\[
\cG_{z,\gl}{'}:=\{\mu\in\cR\mid \lan \mu,f\ran\geq 0
\ \ \text{ for all }f\in\cG_{z,\gl}\}. 
\]

\begin{thm} \label{thm3-sec2}
Assume \erf{F1}, \erf{F2} and \erf{CP'}. Assume, in addition, \erf{Z} 
if $\gl=0$. 
Let $\gl\geq 0$ and $v^\gl\in C(\T^n)$ be a solution of 
\erf{DP}. Then 
\begin{equation}\label{rep2-sec2}
\gl v^\gl(z)=\min_{\mu\in \cP\,\cap\,{\cG_{z,\gl}}'}\int_{\T^n\tim\cA}L(x,\alpha)\,\mu(dxd\ga).
\end{equation}
\end{thm}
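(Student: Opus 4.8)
The plan is to obtain \eqref{rep2-sec2} from Sion's minimax theorem applied to the bilinear coupling between probability measures and admissible pairs for \eqref{DP'}. Define $\Phi\colon\cP\tim\cF_\gl\to\R$ by $\Phi(\mu,(\phi,u)):=\lan\mu,L-\phi\ran+\gl u(z)$. For each $(\phi,u)$ this is affine and weak-$*$ continuous in $\mu$, and for each $\mu$ it is affine in $(\phi,u)$; moreover $\cP$ is convex and weak-$*$ compact (Prokhorov, using that $\cA$, hence $\b$, is compact) while $\cF_\gl$ is convex by Lemma \ref{dmmc-thm2}. Hence Sion's theorem yields
\[
\min_{\mu\in\cP}\ \sup_{(\phi,u)\in\cF_\gl}\Phi(\mu,(\phi,u))
\ =\ \sup_{(\phi,u)\in\cF_\gl}\ \min_{\mu\in\cP}\Phi(\mu,(\phi,u)),
\]
and the proof reduces to computing the two sides.

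For the left-hand side, fix $\mu\in\cP$; using $\lan\mu,\1\ran=1$ we have $\gl u(z)-\lan\mu,\phi\ran=-\lan\mu,\phi-\gl u(z)\1\ran$, and as $(\phi,u)$ ranges over $\cF_\gl$ the function $\phi-\gl u(z)\1$ ranges over $\cG_{z,\gl}$, so the inner supremum is $\lan\mu,L\ran-\inf_{g\in\cG_{z,\gl}}\lan\mu,g\ran$. Since $\cG_{z,\gl}$ is a cone with vertex at the origin (Lemma \ref{dmmc-thm1}), this infimum is $0$ if $\mu\in{\cG_{z,\gl}}'$ and $-\infty$ otherwise, so the left-hand side equals $\inf_{\mu\in\cP\cap{\cG_{z,\gl}}'}\lan\mu,L\ran$ (read as $+\infty$ if this set is empty). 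For the right-hand side, for fixed $(\phi,u)$ one has $\min_{\mu\in\cP}\lan\mu,L-\phi\ran=\min_\b(L-\phi)$ since $\b$ is compact, so the right-hand side equals $\sup_{(\phi,u)\in\cF_\gl}\bigl(\gl u(z)+\min_\b(L-\phi)\bigr)$.

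The crux is to identify this last supremum with $\gl v^\gl(z)$. The inequality ``$\ge$'' is immediate, since $(L,v^\gl)\in\cF_\gl$ (because $v^\gl$ solves \eqref{DP}) and the pair $(\phi,u)=(L,v^\gl)$ contributes $\gl v^\gl(z)$. For ``$\le$'', given $(\phi,u)\in\cF_\gl$ put $k:=\min_\b(L-\phi)$; then $\phi\le L-k\1$, hence $F_\phi\ge F_{L-k\1}=F+k$, and combining with $\gl u+F_\phi[u]\le0$ gives $\gl u+F[u]\le-k$ on $\t$. If $\gl>0$, then $u+k\gl^{-1}$ is a subsolution of \eqref{DP}, so \eqref{CP'} gives $u+k\gl^{-1}\le v^\gl$ on $\t$; evaluating at $z$ and multiplying by $\gl$ gives $\gl u(z)+k\le\gl v^\gl(z)$. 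If $\gl=0$, one first checks, by sandwiching $v^{\gl'}$ between $w\pm\|w\|_\infty$ for a solution $w$ of \eqref{EP} with critical value $0$ (which exists by \eqref{Z}), that $\gl'v^{\gl'}\to0$ uniformly as $\gl'\to0^+$; then from $F[u]\le-k$ the function $u-\|u\|_\infty+k(\gl')^{-1}$ is a subsolution of \eqref{DP} with discount $\gl'$, hence lies below $v^{\gl'}$, and letting $\gl'\to0^+$ forces $k\le0=\gl v^0(z)$. In both cases $\gl u(z)+\min_\b(L-\phi)\le\gl v^\gl(z)$.

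Consequently both sides of the minimax equality equal $\gl v^\gl(z)$; in particular the left-hand side is finite, so $\cP\cap{\cG_{z,\gl}}'\neq\emptyset$ and $\inf_{\mu\in\cP\cap{\cG_{z,\gl}}'}\lan\mu,L\ran=\gl v^\gl(z)$. Since $\cP\cap{\cG_{z,\gl}}'$ is weak-$*$ compact (intersection of the weak-$*$ compact $\cP$ with the weak-$*$ closed cone ${\cG_{z,\gl}}'$) and $\mu\mapsto\lan\mu,L\ran$ is weak-$*$ continuous, this infimum is attained, which gives \eqref{rep2-sec2}. I expect the main obstacle to be the ``$\le$'' half of $\sup_{(\phi,u)\in\cF_\gl}\bigl(\gl u(z)+\min_\b(L-\phi)\bigr)=\gl v^\gl(z)$ and, within it, the degenerate case $\gl=0$, which cannot be treated by a shift-and-compare inside \eqref{DP} alone and must go through the vanishing-discount limit and \eqref{Z}; a secondary point is the bookkeeping ensuring Sion's theorem may be invoked although the inner supremum is extended-real-valued (equal to $+\infty$ precisely off ${\cG_{z,\gl}}'$).
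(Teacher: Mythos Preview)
Your proof is correct and follows essentially the same route as the paper's: rewrite the constrained infimum over $\cP\cap\cG_{z,\gl}'$ as an unconstrained $\inf_\mu\sup_f$ using the cone property of $\cG_{z,\gl}$ (Lemma~\ref{dmmc-thm1}), apply Sion's minimax theorem, and identify the resulting $\sup_f\inf_\mu$ with $\gl v^\gl(z)$ via the comparison principle. The only differences are cosmetic: the paper packages the last identification as a proof by contradiction (supposing the sup exceeds $\gl v^\gl(z)+\ep$ and deriving a contradiction from \erf{CP'} or Proposition~\ref{exist1}\,(ii)), whereas you compute both inequalities directly; and for $\gl=0$ the paper invokes Proposition~\ref{exist1}\,(ii), while you run a short vanishing-discount argument which amounts to re-proving that proposition in the case at hand.
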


Notice that formula \erf{rep1-sec2} 
can be seen as the special case of \erf{rep2-sec2}, with $\gl=0$.
Thus, the proof of Theorems \ref{thm2-sec2} is included in that of 
Theorem \ref{thm3-sec2} below. 
Note also that $v^\gl$ is unique for $\gl>0$, and is not unique in general for $\gl=0$.

\begin{proof}
We fix $(z,\gl)\in\t\tim[0,\,\infty)$.   
We note first that 
$(L,v^\gl)\in\cF_\gl$ and observe by the definition of 
$\cG_{z,\gl}{'}$ that for any $\mu\in\cP\,\cap\,\cG_{z,\gl}{'}$, 
\[
0\leq \lan \mu,L-\gl v^\gl(z)\ran=\lan \mu,L\ran-\gl v^\gl(z),
\]
which yields 
\begin{equation}\label{thm3-2-1}
\gl v^\gl(z)\leq \inf_{\mu\in\cP\,\cap\,\cG_{z,\gl}{'}}\lan\mu,L\ran. 
\end{equation}

To show the reverse inequality, 
we argue by contradiction. The first step is to suppose that
\[
\gl v^\gl(z)<\inf_{\mu\in\cP\,\cap\,\cG_{z,\gl}{'}}\lan \mu,L\ran,
\]
and select $\ep>0$ so that 
\begin{equation}\label{thm3-2-2}
\gl v^\gl(z)+\ep<\inf_{\mu\in\cP\,\cap\,\cG_{z,\gl}{'}}\lan \mu,L\ran.
\end{equation}

Now, since $\cG_{z,\gl}$ is a convex cone with vertex at the origin, we deduce that  
\[
\inf_{f\in\cG_{z,\gl}}\lan\mu,f\ran=
\begin{cases}
0 & \text{ if }\ \mu\in \cP\,\cap\,\cG_{z,\gl}{'}\\[3pt]
-\infty \ \ &\text{ if }\ \mu\in\cP\setminus \cG_{z,\gl}{'}.  
\end{cases}
\]
Accordingly, we get 
\[\begin{aligned}
\inf_{\mu\in\cP\,\cap\,\cG_{z,\gl}{'}}&\lan \mu, L\ran
=\inf_{\mu\in\cP}\left(\lan \mu, L\ran-\inf_{f\in\cG_{z,\gl}}
\lan \mu,f\ran\right)
\\&=\inf_{\mu\in\cP}\sup_{f\in\cG_{z,\gl}}\lan\mu,L-f\ran.
\end{aligned}
\]

Observe that $\cP$ is a compact convex subset of $\cR$, with topology 
of weak convergence of measures, $\cG_{z,\gl}$ is a convex subset of $C(\b)$, 
the functional $\mu\mapsto \lan\mu,L-f\ran$ is continuous and linear on $\cR$, again with topology 
of weak convergence of measures, for any $f\in C(\b)$, and the functional $f\mapsto \lan\mu,L-f\ran$ is continuous and affine on $C(\b)$ for any $\mu\in\cR$, and 
then invoke Sion's minimax theorem (see \cite[Corollary 3.3]{Si} and also \cite{Te}), to find that
\[
\inf_{\mu\in\cP}\sup_{f\in \cG_{z,\gl}}\lan \mu,L-f\ran
=\sup_{f\in \cG_{z,\gl}}\inf_{\mu\in\cP}\lan \mu,L-f\ran.
\]
Thus, we have 
\[
\inf_{\mu\in\cP\,\cap\,\cG_{z,\gl}{'}}\lan\mu,L\ran 
=\sup_{f\in\cG_{z,\gl}}\inf_{\mu\in\cP}\lan \mu, L-f\ran.
\]

The above identity and \erf{thm3-2-2} ensures that there is 
$(\phi,u)\in\cF_\gl$ such that 
\[
\gl v^\gl(z)+\ep<\inf_{\mu\in\cP}\lan\mu, L-\phi+\gl u(z)\ran. 
\]
Since the Dirac measure $\gd_{(x,\ga)}$ is in $\cP$ for all $(x,\ga)\in\b$, from the above, we get 
\[
\gl v^\gl(z)+\ep<L(x,\ga)-\phi(x,\ga)+\gl u(z) \ \  \text{ for all }
(x,\ga)\in\b,
\] 
which reads 
\[
F\le F_\phi+\gl(u-v^\gl)(z)-\ep \ \ \text{ on }\t\tim\R^n\tim\bS^n.
\]
This implies that $u$ satisfies
\begin{equation}\label{ineq-1}
\gl u+F[u]
\leq\gl u+F_\phi[u]+\gl(u-v^\gl)(z)-\ep
\le \gl(u-v^\gl)(z)-\ep \ \ \text{ in }\t. 
\end{equation} 

If $\gl>0$, then we observe that $u-(u-v^\gl)(z)+\gl^{-1}\ep$ 
is a subsolution of \erf{DP} and use \erf{CP'}, to obtain 
$u-(u-v^\gl)(z)+\gl^{-1}\ep\leq v^\gl$ on $\t$, which, evaluated at $z$, 
gives a contradiction, $\gl^{-1}\ep\leq 0$. 
If $\gl=0$, then, since $u+C$ is a subsolution of \eqref{ineq-1} 
for any $C\in\R$, by Proposition \ref{exist1} (ii), we get 
$u+C\leq v^\gl$ on $\t$ for all $C\in\R$, which is a contradiction.  
\end{proof}

\begin{definition}[Discounted Mather measure]
For $(z,\gl)\in\t\tim[0,\,\infty)$, let $\cM_{L,z,\gl}$ be the set of minimizers $\mu\in \cP\,\cap\,{\cG_{z,\gl}}'$ of the minimizing problem
\[
\min_{\mu\in\cP\,\cap\,{\cG_{z,\gl}}'}\int_{\T^n\tim\cA}L(x,\ga)\,\mu(dxd\ga).
\]  
We call any $\mu\in \cM_{L,z,\gl}$ with 
$z\in\T^n$ and $\gl>0$ a \emph{discounted Mather measure}.  
\end{definition}
Since $\cP$ is sequentially compact in the weak topology 
of measures, it is easily seen that, under the hypotheses 
of Theorem \ref{thm3-sec2}, $\cM_{L,z,\gl}\not=\emptyset$. 
Note that $\cM_{L,z,0}$ is identical to $\cM_{L,0}$ and does not depend on $z$.

\subsection{Convergence with vanishing discount} We begin with two lemmas.

\begin{lem}\label{rcom}
Assume \erf{F1}, \erf{F2}, \erf{CP'} and \erf{Z}. 
For each $\gl>0$, let 
$v^\gl\in C(\t)$ be the unique solution of \erf{DP}. Then 
$\{v^\gl\}_{\gl>0}$ is uniformly bounded on $\t$.
\end{lem}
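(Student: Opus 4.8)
The plan is to derive the uniform bound on $\{v^\gl\}_{\gl>0}$ by combining two ingredients: the uniform bound on $\gl v^\gl$ from Proposition \ref{exist1}(i), and the equi-continuity assumption \erf{EC}. Since we are working under \erf{Z}, the critical value is normalized to $c=0$, so Observation \ref{fund}/Proposition \ref{prop-fund} gives $\gl v^\gl \to 0$ uniformly on $\t$ as $\gl \to 0$; in particular $\gl v^\gl$ is bounded on $\t$ uniformly in $\gl>0$, say $|\gl v^\gl| \le M_0$. The only real task is to upgrade this to a genuine bound on $v^\gl$ itself that does not blow up as $\gl\to0$.

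First I would exploit \erf{EC}: the family $\{v^\gl\}_{\gl>0}$ is equi-continuous on the compact space $\t$, so there is a common modulus of continuity $\go$ with $|v^\gl(x)-v^\gl(y)| \le \go(|x-y|)$ for all $x,y\in\t$ and all $\gl>0$. Hence $\osc_\t v^\gl := \max_\t v^\gl - \min_\t v^\gl \le \go(\diam \t) =: C_0$, a bound uniform in $\gl$. So it suffices to bound, say, $\min_\t v^\gl$ uniformly in $\gl$. Integrating the pointwise bound: at a point $x_\gl$ where $v^\gl$ attains its minimum we have $\gl v^\gl(x_\gl) \ge -M_0$, so $\min_\t v^\gl \ge -M_0/\gl$ — which is useless for small $\gl$. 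The fix is to instead use that $\max_\t v^\gl$ and $\min_\t v^\gl$ differ by at most $C_0$, together with the fact that $\gl v^\gl(x) \to 0$ uniformly. Concretely, for each $\gl$ pick $y_\gl$ with $v^\gl(y_\gl)=\max_\t v^\gl$ and $x_\gl$ with $v^\gl(x_\gl)=\min_\t v^\gl$. Then $\gl v^\gl(y_\gl) \ge \gl v^\gl(x_\gl)$ is not immediately what I want either; the clean route is to use the viscosity inequalities directly at the extrema.

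The cleanest argument: at the minimum point $x_\gl$, the constant function $v^\gl(x_\gl)$ touches $v^\gl$ from below, so by the supersolution property $\gl v^\gl(x_\gl) + F(x_\gl,0,0) \ge 0$, giving $\gl \min_\t v^\gl \ge -F(x_\gl,0,0) \ge -\max_\t (-F(x,0,0)) = -L_0 \ge$ (a fixed constant, using continuity of $F$ and compactness of $\t$). Wait — this gives a lower bound $\gl\min_\t v^\gl \ge -L_0$, still the wrong direction for small $\gl$. Symmetrically, at the maximum point $y_\gl$, the subsolution property gives $\gl v^\gl(y_\gl) + F(y_\gl,0,0) \le 0$, so $\gl \max_\t v^\gl \le -F(y_\gl,0,0) \le \max_\t(-F(x,0,0)) = L_0$. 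Combining with $\max_\t v^\gl - \min_\t v^\gl \le C_0$: from $\gl\max_\t v^\gl \le L_0$ and $\gl\min_\t v^\gl \ge \gl\max_\t v^\gl - \gl C_0 \ge -L_0 - \gl C_0$... no. The actual mechanism must be: the lower bound on $\min v^\gl$ comes from the \emph{existence of a solution to \erf{EP}}. Under \erf{Z}, let $w$ solve $F[w]=0$. Then $w$ is a subsolution of \erf{DP} shifted: $\gl(w - \gl^{-1}\|{\cdot}\|...) $ — more precisely, $w - \|w\|_\infty$ is a subsolution of $\gl u + F[u] \le \gl(w-\|w\|_\infty) \le 0$? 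Check: $\gl(w-\|w\|_\infty) + F[w-\|w\|_\infty] = \gl(w - \|w\|_\infty) + F[w] = \gl(w-\|w\|_\infty) \le 0$. Yes. So by \erf{CP'}, $w - \|w\|_\infty \le v^\gl$, giving $\min_\t v^\gl \ge \min_\t w - \|w\|_\infty$, a bound uniform in $\gl$. Symmetrically $v^\gl \le w + \|w\|_\infty$ wait that needs $w+C$ supersolution: $\gl(w+\|w\|_\infty) + F[w] = \gl(w+\|w\|_\infty) \ge 0$, so $w+\|w\|_\infty$ is a supersolution, hence $v^\gl \le w + \|w\|_\infty$.

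So the plan: (1) invoke \erf{Z} to get a fixed solution $(w,0)$ of \erf{EP}; (2) observe $w - \|w\|_{C(\t)}$ is a subsolution and $w + \|w\|_{C(\t)}$ a supersolution of \erf{DP}; (3) apply the comparison principle \erf{CP'} (which holds by hypothesis, and note $v^\gl$ is both a sub- and supersolution) to sandwich $w - \|w\|_{C(\t)} \le v^\gl \le w + \|w\|_{C(\t)}$ on $\t$ for every $\gl>0$; (4) conclude $\|v^\gl\|_{C(\t)} \le 2\|w\|_{C(\t)}$, a bound independent of $\gl$. I do not even need \erf{EC} for this — equi-continuity is a separate input used elsewhere. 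The main (and only) subtlety is the elementary verification in step (2) that adding/subtracting a constant to the ergodic solution $w$ turns it into a sub-/super-solution of the discounted equation, which is immediate from the structure of \erf{DP} once $c=0$; everything else is a one-line application of comparison. There is essentially no obstacle here — this is a standard a priori estimate — the only thing to be careful about is invoking \erf{CP'} in the precise form stated (with $U = \t$, so $\pl U = \emptyset$ and the boundary hypothesis $v \le w$ on $\pl U$ is vacuous).
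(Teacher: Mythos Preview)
Your final argument is correct and is exactly the paper's proof: take a solution $w$ of \erf{EP} with $c=0$, note that $w\pm\|w\|_{C(\t)}$ are a super/subsolution of \erf{DP}, and apply \erf{CP'} to get $|v^\gl|\le 2\|w\|_{C(\t)}$. Your observation that \erf{EC} is not needed here is also correct; the initial detours through equi-continuity and extrema arguments can simply be deleted.
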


\begin{proof} Let $u\in C(\t)$ be a solution of \erf{EP}, with $c=0$. 
Observe that the functions $u+\|u\|_{C(\t)}$ and $u-\|u\|_{C(\t)}$ are, 
respectively, 
a supersolution and a subsolution of \erf{DP}, with any $\gl>0$, 
and, by the comparison principle, \erf{CP'}, that 
$|v^\gl|\leq 2\|u\|_{C(\t)}$ on $\t$ for all $\gl>0$. This completes the proof.  
\end{proof}

\begin{lem}\label{thm7-sec2} Let $z\in\t$ and $\{\mu_j\}_{j\in\N}\subset\cR$ be 
such that, as $j\to\infty$, 
$\mu_j \to\mu$ weakly in the sense of measure for some $\mu\in\cR$
and such that  $\mu_j\in{\cG_{z,\gl_j}}'$ for all $j\in\N$ and for some 
$\{\gl_j\}\subset (0,\,\infty)$ converging to zero. 
Then $\mu\in{\cG_0}'$. 
\end{lem}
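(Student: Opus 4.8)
\textbf{Proof proposal for Lemma \ref{thm7-sec2}.}

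The plan is to show that $\langle \mu, f \rangle \geq 0$ for every $f \in \cG_0$, using the hypothesis that $\langle \mu_j, g \rangle \geq 0$ for every $g \in \cG_{z,\gl_j}$ together with the weak convergence $\mu_j \to \mu$. The key point is that, given $f = \phi \in \cG_0$ arising from a pair $(\phi, u) \in \cF_0$ (so $u$ is a subsolution of \erf{EP'}, i.e. $F_\phi[u] \leq 0$ in $\t$), I want to produce, for each $j$, an element $f_j \in \cG_{z,\gl_j}$ with $f_j \to f$ uniformly on $\b$; then $\langle \mu_j, f_j \rangle \geq 0$, and passing to the limit using $\mu_j \to \mu$ weakly together with the uniform convergence $f_j \to f$ gives $\langle \mu, f \rangle \geq 0$.

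To build $f_j$, observe that if $(\phi, u) \in \cF_0$, then $u$ is also a subsolution of $\gl_j u + F_\phi[u] = \gl_j u \leq \gl_j \|u\|_{C(\t)}$ in $\t$; equivalently, the pair $(\phi + \gl_j \|u\|_{C(\t)}, u)$ is a subsolution of \erf{DP'} with discount $\gl_j$ — indeed $\gl_j u + F_{\phi + \gl_j\|u\|_{C(\t)}}[u] = \gl_j u + F_\phi[u] - \gl_j\|u\|_{C(\t)} \leq 0$. Hence $(\phi + \gl_j\|u\|_{C(\t)}, u) \in \cF_{\gl_j}$, and therefore $f_j := \phi + \gl_j\|u\|_{C(\t)} - \gl_j u(z) \in \cG_{z,\gl_j}$. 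Since $u$ is fixed (depending only on the chosen $\phi \in \cG_0$), we have $f_j - f = \gl_j(\|u\|_{C(\t)} - u(z)) \to 0$ in $C(\b)$ as $j \to \infty$, because $\gl_j \to 0$ and the constant $\|u\|_{C(\t)} - u(z)$ is independent of $j$.

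The passage to the limit is then routine: write $\langle \mu_j, f_j \rangle = \langle \mu_j, f \rangle + \langle \mu_j, f_j - f \rangle$. The first term converges to $\langle \mu, f \rangle$ by the weak convergence $\mu_j \to \mu$ (with $f \in C(\b)$ fixed). For the second term, I would note that $\{\mu_j\}$ has uniformly bounded total mass — this needs a brief justification: since $\mu_j \to \mu$ weakly in the sense of Radon measures and $\b$ is compact, testing against $\1 \in C(\b)$ gives $\sup_j \mu_j(\b) = \sup_j \langle \mu_j, \1\rangle < \infty$; then $|\langle \mu_j, f_j - f\rangle| \leq \mu_j(\b)\,\|f_j - f\|_{C(\b)} \to 0$. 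Combining, $\langle \mu, f \rangle = \lim_j \langle \mu_j, f_j \rangle \geq 0$. Since $f \in \cG_0$ was arbitrary, $\mu \in {\cG_0}'$.

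The only mild obstacle is bookkeeping the direction of the inequality when enlarging $\phi$ to $\phi + \gl_j\|u\|_{C(\t)}$: increasing the ``cost'' $\phi$ decreases $F_\phi$, which is exactly what is needed so that $u$ remains a subsolution of the discounted equation, so there is no real difficulty — the argument is a direct application of the definitions of $\cG_0$, $\cG_{z,\gl_j}$, and weak convergence. One should also make sure the element $f_j$ genuinely lies in $\cG_{z,\gl_j}$ as defined, $\{\phi' - \gl_j u'(z) : (\phi', u') \in \cF_{\gl_j}\}$, which it does with $\phi' = \phi + \gl_j\|u\|_{C(\t)}$ and $u' = u$.
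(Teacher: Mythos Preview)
Your proof is correct and follows essentially the same idea as the paper's: given $(\phi,u)\in\cF_0$, perturb $\phi$ by a term of order $\gl_j$ so that the same $u$ becomes a subsolution of the discounted equation, then pass to the limit. The only difference is cosmetic: the paper takes $\phi_j:=\phi+\gl_j u$ (so that $\gl_j u+F_{\phi_j}[u]=F_\phi[u]\le 0$ exactly), whereas you take the constant shift $\phi+\gl_j\|u\|_{C(\t)}$; both choices work and lead to the same limit. One small remark on your total-mass bound: for possibly signed $\mu_j\in\cR$, the inequality $|\langle\mu_j,f_j-f\rangle|\le \mu_j(\b)\|f_j-f\|_{C(\b)}$ should use the total variation $|\mu_j|(\b)$ rather than $\mu_j(\b)$, but this is harmless here since $f_j-f$ is a constant, so $\langle\mu_j,f_j-f\rangle=\gl_j(\|u\|_{C(\t)}-u(z))\langle\mu_j,\1\rangle\to 0$ directly by weak convergence.
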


\begin{proof} Let $(\phi,u)\in\cF_0$.  
Fix any $j\in\N$, set 
$\phi_j:=\phi+\gl_j u\in C(\b)$, and observe that 
$\gl_j u+F_{\phi_j}[u]\leq 0$ in $\t$ in the viscosity sense. 
That is, we have $(\phi_j,u)\in\cF_{\gl_j}$.  
Since $\mu_j\in{\cG_{z,\gl_j}}'$, we get 
\[
0\leq \lan \mu_j,\phi_j-\gl_j u(z)\ran
=\lan \mu_j, \phi\ran+\gl_j\lan \mu_j, u\ran-\gl_j u(z).
\]
Sending $j\to\infty$ in the above yields $\lan \mu,\phi\ran\geq 0$. 
Thus, we see that $\mu\in{\cG_0}'$.  
\end{proof}

\begin{proof}[Proof of Theorem {\rm\ref{thm1-sec2}}] By Proposition \ref{prop-fund}, problem \erf{EP} has a solution $(u,c)\in C(\t)\tim\R$. 
Also, for each $\gl>0$, there exists a (unique) solution 
$v^\gl\in C(\t)$ of \erf{DP}. 

We treat first the case where $c=0$.  
Let $\cU$ be the set of accumulation points,  as 
$\gl\to 0$, in $C(\T^n)$ 
of $\{v^\gl\}_{\gl>0}$. By Lemma \ref{rcom}, 
\erf{EC} and the Arzela-Ascoli theorem, we see that 
the family $\{v^\gl\}_{\gl>0}$ is relatively compact in $C(\t)$ and, hence, 
$\cU$ is non-empty.

Next, we show that $\cU$ is a singleton by showing that
\begin{equation}\label{thm1-2-1}
v\geq w \ \ \text{ on }\t \ \ \text{ for all }
v,\,w\in\cU.
\end{equation}
To prove \erf{thm1-2-1}, we fix any 
$v,\,w\in\cU$ and $z\in\T^n$, and    
choose sequences $\{\gl_j\}_{j\in\N},\,\{\gd_j\}_{j\in\N}
\subset(0,\,1)$, 
converging to zero as $j\to\infty$, so that 
$v^{\gl_j}\to v$ and $v^{\gd_j}\to w$ in $C(\T^n)$ as $j\to\infty$. 
By Theorem \ref{thm3-sec2}, there exists
a sequence $\{\mu_j\}_{j\in\N}\subset\cP$ 
such that $\mu_j\in\cM_{L,z,\gl_j}$
for all $j\in\N$.

We may assume by passing to a subsequence 
of $\{\mu_j\}$ 
that for some $\mu$, as $j\to\infty$,
$\mu_j\to \mu$ weakly in the sense of measure. 
By Lemma \ref{thm7-sec2}, we have $\mu\in \cG_0{'}$, 
and, since $\,
\gl_j v^{\gl_j}(z)=\lan\mu_j,L\ran$, 
we get, in the limit as $j\to\infty$, $\,
\lan \mu,L\ran=0$.  
Hence, we find that $\mu\in\cM_{L,0}$. 

Noting that 
$(L-\gd_j v^{\gd_j},\,v^{\gd_j})\in\cF_0$ and 
$(L+\gl_j w,w)\in\cF_{\gl_j}$
and that $\mu\in \cM_{L,0}\subset \cG_0{'}$ 
and $\mu_j\in\cM_{L,z,gl_j}\subset\cG_{z,\gl_j}{'}$, 
we obtain  
\[
0\leq \lan\mu,L-\gd_j v^{\gd_j}\ran=-\gd_j \lan \mu,v^{\gd_j}\ran,
\]
and
\[
0\leq \lan \mu_j,L+\gl_j w-\gl_jw(z)\ran 
=\gl_j \left(v^{\gl_j}(z) +\lan\mu_j,w\ran-w(z)\right).
\]
Dividing the above inequalities by $\gd_j$ and $\gl_j$, respectively, 
and sending 
$j\to\infty$, we get 
\[
\lan\mu,w\ran\leq 0 \ \ \text{ and } \ \ w(z)\leq v(z)+\lan \mu, w\ran.
\]
These together yield $\,w(z)\leq v(z)$. Since $z\in\t$ is arbitrary, we conclude that \erf{thm1-2-1} holds and, hence, that $\cU$ is a singleton. 
Now, it is easy to see that, as $\gl \to 0+$, $v^\gl$ converges 
to a unique element of $\cU$.   

In the general case when $c$ may not be zero, if $u$ is a 
solution of \erf{EP}, then $u$ is a solution of 
$F_{L+c}[u]=0$ in $\t$. Also, the function 
$v^\gl+\gl^{-1}c$ is a solution 
of $\gl u+F_{L+c}[u]=0$ in $\t$. In view of this simple change 
of the unknown functions, we find that \erf{F1}, \erf{F2}, 
\erf{CP'} and  \erf{EC} hold, with $F$ replaced by $F_{L+c}$. 
By applying the previous observation for the case of $c=0$, 
we conclude that 
\[
\lim_{\gl\to 0+}\left(v^\gl+\gl^{-1}c\right)=u 
\ \ \text{ in }\ C(\t)
\]
for some $u\in C(\t)$ and $u$ is a solution of \erf{EP}. 
\end{proof}


\section{Proof of Theorem {\rm\ref{thm2-sec3}}} \label{sec3}
In this section we will give the proof of Theorem \ref{thm2-sec3}. 
Before going into the details, we note that 
if our equation has the form of \eqref{F1} and 
$\cA$ is compact, then the function $F(x,p,X)$ has at most 
linear growth in $|p|$ and $|X|$. 
Therefore, the function $F(x,p,X)$, defining our PDE,  
which has a higher order growth 
in $|p|$ is not handled yet in Section \ref{sec2}. 

More precisely, let us consider, for example, the equation 
\[
\gl u(x)-\tr \gs^t(x)\gs(x)D^2 u(x)+\fr 1m|Du(x)|^m=f(x) \ \ \ \text{ in }\T^n,
\]
where $m>2$, $\gl$ is a non-negative constant, 
$\gs\in \Lip(\T^n, \R^{k \times n})$ for some $k \in \N$, 
and $f\in C(\T^n)$ is a given function.  
Since
\[
\fr 1m|p|^m=\max_{q\in\R^n}\left(-q\cdot p-\fr{m-1}{m}|q|^{\fr m{m-1}}\right),
\]  
if we set   
\[
F(x,p,X):=\sup_{q\in\R^n}\left(
-\tr \gs^t(x)\gs(x) X-q\cdot p-f(x)-\fr{m-1}{m}|q|^{\fr{m}{m-1}}\right),
\]
then the equation above has the same form as \eqref{DP}. Note that 
the control set $\cA$ here is $\R^n$, which is not compact,
and the functions $b(x,q)=q$ and $L(x,q)=f(x)+\fr{m-1}{m}|q|^{\fr{m}{m-1}}$ 
play the roles of those $b$ and $L$ in the previous sections, respectively. 
We will address examples in more details in Section \ref{example}. 

In this section, we consider the general case where $\cA$ may not be 
compact. We first notice that the function 
\[
F_\phi(x,p,X)=\sup_{\ga\in\cA}\left(-\tr a(x,\ga)X-b(x,\ga)\cdot p-\phi(x,\ga)\right)
\]
may neither be continuous nor well-defined as a real-valued function in $\t\tim\R^n\tim\bS^n$ for some $\phi\in C(\b)$. 
Therefore, we need to be careful in using the strategy in Section \ref{sec2}.

Fortunately, reviewing the arguments in the previous section carefully, 
we realize that the cone $\cF_\gl$ can be replaced by a smaller convex cone.  
With this idea in mind, we introduce $\Phi^+$ as the convex cone  
consisting of functions $\phi\in C(\b)$ of the form
\[
\phi(x,\ga)=tL(x,\ga)+\chi(x), \ \ \text{ with }\ t>0 \ \text{ and }\ \chi\in C(\t),
\]
and we observe that, for any $\phi\in\Phi^+$, if 
$\phi=tL+\chi$, then 
\[\begin{aligned}
F_\phi(x,p,X)&\,=\sup_{\ga\in\cA}\left(-\tr a(x,\ga)X
-b(x,\ga)\cdot p -tL(x,\ga)-\chi(x)
\right)
\\&\,=t\sup_{\ga\in\cA}\left(-\tr a(x,\ga)t^{-1}X
-b(x,\ga)\cdot t^{-1}p -L(x,\ga)
\right)-\chi(x)
\\&\,
=tF(x,t^{-1}p,t^{-1}X)-\chi(x).
\end{aligned}
\]
This shows that if $F\in C(\t\tim\R^n\tim\bS^n)$, then 
\[
F_\phi\in C(\t\tim\R^n\tim\bS^n) \ \ \text{ for all }\phi\in\Phi^+.
\]

In this section, the following hypothesis replaces the role of \erf{CP'} in the previous section. 
\begin{equation}\tag{CP$''$}\label{CP''}
\left\{\text{
\begin{minipage}{0.85\textwidth}
The comparison principle holds for $\gl u+F[u]=\eta$, with $\eta\in C(\t)$.    
More precisely, let $\gl>0$, $\eta\in C(\t)$, and   
$U$ be any open subset of $\t$. If  
$v\in \USC(U),\,w\in \LSC(U)$ are a subsolution and a supersolution of 
$\gl u+F[u]=\eta$ in $U$, respectively, and $v\leq w$ on $\pl U$, then 
$v\leq w$ in $U$.
\end{minipage}
}\right.\end{equation}
This is obviously a stronger assumption than (CP).

\begin{lem}\label{thm1-sec3} 
Under \erf{CP''}, the following comparison principle holds: 
Let $\gl>0$, $\eta\in C(\t)$, $\phi\in\Phi^+$, and   
$U$ be any open subset of $\t$. If  
$v\in C(U),\,w\in C(U)$ are a subsolution and a supersolution of 
$\gl u+F_\phi[u]=\eta$ in $U$, respectively, and $v\leq w$ on $\pl U$, then 
$v\leq w$ in $U$.
\end{lem}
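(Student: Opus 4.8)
The plan is to reduce the comparison principle for $\gl u + F_\phi[u] = \eta$ with $\phi \in \Phi^+$ to the assumed comparison principle \erf{CP''} for $\gl u + F[u] = \eta$, by exploiting the scaling identity $F_\phi(x,p,X) = tF(x,t^{-1}p, t^{-1}X) - \chi(x)$ established just above, where $\phi = tL + \chi$ with $t > 0$ and $\chi \in C(\t)$. The key observation is that if $v$ is a subsolution of $\gl u + F_\phi[u] = \eta$ in $U$, then the rescaled function $\tilde v := t^{-1} v$ should be a subsolution of $\gl \tilde u + F[\tilde u] = t^{-1}(\eta + \chi)$ in $U$, and similarly $\tilde w := t^{-1} w$ a supersolution of the same equation; then \erf{CP''} applied with $\tilde\eta := t^{-1}(\eta + \chi) \in C(\t)$ gives $\tilde v \le \tilde w$ on $U$, hence $v \le w$ on $U$, provided $v \le w$ on $\pl U$ (which is preserved under multiplication by $t^{-1} > 0$).

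First I would verify the rescaling step at the level of viscosity solutions. Suppose $\psi \in C^2(U)$ and $\tilde v - \psi$ has a local maximum at $x_0 \in U$; then $v - t\psi$ has a local maximum at $x_0$, so the subsolution property of $v$ gives $\gl v(x_0) + F_\phi(x_0, t D\psi(x_0), t D^2\psi(x_0)) \le \eta(x_0)$. Using the identity $F_\phi(x_0, tD\psi(x_0), tD^2\psi(x_0)) = tF(x_0, D\psi(x_0), D^2\psi(x_0)) - \chi(x_0)$ and dividing by $t > 0$, we obtain $\gl \tilde v(x_0) + F(x_0, D\psi(x_0), D^2\psi(x_0)) \le t^{-1}(\eta(x_0) + \chi(x_0)) = \tilde\eta(x_0)$, which is exactly the subsolution inequality for the rescaled equation. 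The argument for supersolutions is the mirror image, reversing the inequalities and the extremum. Since $v, w \in C(U)$, there is no semicontinuity subtlety here.

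The remaining piece is to check that $\tilde\eta = t^{-1}(\eta + \chi)$ is indeed in $C(\t)$ — immediate since $\eta, \chi \in C(\t)$ and $t > 0$ — and that the boundary ordering transfers: $v \le w$ on $\pl U$ implies $\tilde v = t^{-1}v \le t^{-1}w = \tilde w$ on $\pl U$. Then \erf{CP''} applies verbatim to $\tilde v$ and $\tilde w$ as a subsolution and supersolution of $\gl u + F[u] = \tilde\eta$ in $U$ with $\tilde v \le \tilde w$ on $\pl U$, yielding $\tilde v \le \tilde w$ in $U$, and multiplying back by $t > 0$ concludes $v \le w$ in $U$.

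I do not anticipate a serious obstacle here; the lemma is essentially a bookkeeping consequence of the homogeneity structure of $F_\phi$ in $\Phi^+$ together with the translation by $\chi$. The only point requiring a little care is making sure the viscosity test-function computation is carried out correctly under the linear change of variable $u \mapsto t^{-1}u$ — in particular that the scaling of both $Du$ and $D^2u$ by the same factor $t$ is precisely what the identity for $F_\phi$ demands — but this is routine. It is worth noting that, unlike the situation in Section \ref{sec2}, here we genuinely need the stronger hypothesis \erf{CP''} (with a continuous right-hand side $\eta$) rather than \erf{CP}, precisely because the shift by $\chi$ produces a nonzero continuous forcing term $\tilde\eta$ even when we start from the homogeneous equation.
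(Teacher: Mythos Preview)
Your proof is correct and follows essentially the same approach as the paper: you rescale by $t^{-1}$, use the identity $F_\phi(x,p,X)=tF(x,t^{-1}p,t^{-1}X)-\chi(x)$ to see that $v/t$ and $w/t$ are a sub- and supersolution of $\gl u+F[u]=t^{-1}(\eta+\chi)$ in $U$, and then apply \erf{CP''}. Your additional verification of the viscosity test-function computation is more detailed than the paper's, but the argument is identical in substance.
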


\begin{proof} Let $v\in C(U),\,w\in C(U)$ be a subsolution and a supersolution of $\gl u+F_\phi[u]=\eta$ in $U$, respectively, and let 
$t>0$ and $\chi\in C(\t)$ be such that $\phi=tL+\chi$. 
Thanks to the above observation, $F_\phi(x,p,X)= tF(x,t^{-1}p,t^{-1}X)-\chi(x)$, the functions 
$v/t$ and $w/t$ are a subsolution and a supersolution of $\gl u+F[u]=t^{-1}(\chi+\eta)$ in $U$, respectively.  Obviously, we have $v/t\leq w/t$ on $\pl\gO$. 
Hence, 
by \erf{CP''}, we get $v/t\leq w/t$ in $U$. That is, we have 
$v\leq w$ in $U$. 
\end{proof} 

Now, for $z\in\t$ and $\gl\geq 0$, we define the sets $\cF_\gl\subset\Phi^+\tim C(\t)$, $\cG_\gl\subset C(\b)$, respectively, by 
\begin{align*}
&\cF_\gl:=\{(\phi,u)\in \Phi^+\tim C(\t)\mid u \ \text{is a subsolution of} \ \erf{DP'}\}, \\
&\cG_{z,\gl}:=\{\phi-\gl u(z)\mid (\phi,u)\in\cF_\gl\}
\end{align*}
as in Section \ref{sec2}. 
Since the set $\cG_{z,0}$ are independent of $z$, 
we write also $\cG_{0}$ for $\cG_{z,0}$. 
We note that we use here the same notation in Section \ref{sec2} by abuse of notation, 
although the symbols $\cF_0$ and $\cG_{z,\gl}$ denote the sets of functions 
different from those defined in Section \ref{sec2}.

\begin{lem}\label{thm3-sec3}Assume \erf{F1}, \erf{F2} 
and \erf{CP''}. For any 
$(z,\gl)\in\t\tim[0,\infty)$, the set $\cG_{z,\gl}$ is a convex cone with vertex at the origin.
\end{lem}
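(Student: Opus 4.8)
The plan is to mirror, in the non-compact setting, the proof of Lemma \ref{dmmc-thm1}, using the restricted cone $\Phi^+$ of admissible test functions in place of all of $C(\b)$, and using the comparison principle for $F_\phi$ established in Lemma \ref{thm1-sec3} in place of \erf{CP'}. As in Section \ref{sec2}, the cone property of $\cG_{z,\gl}$ will reduce to showing that $\cF_\gl$ is itself a convex cone with vertex at the origin; once $\cF_\gl$ is a convex cone, the map $(\phi,u)\mapsto \phi-\gl u(z)$ is linear in $(\phi,u)$, so its image $\cG_{z,\gl}$ is again a convex cone with vertex at the origin. So the real content is: (a) $\cF_\gl$ is convex, and (b) $\cF_\gl$ is invariant under multiplication by nonnegative scalars. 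One first has to check that this is well-posed at all, i.e. that $\Phi^+$ is closed under convex combinations and nonnegative scaling — this is immediate from the definition $\phi = tL+\chi$ with $t>0$, $\chi\in C(\t)$, since $t_0L+\chi_0$ and $t_1L+\chi_1$ combine to $(st_0+(1-s)t_1)L + (s\chi_0+(1-s)\chi_1)$ and $rtL + r\chi = (rt)L+(r\chi)$, and we also need $F_\phi\in C(\t\tim\R^n\tim\bS^n)$ for each such $\phi$, which was already verified in the discussion preceding \erf{CP''}.

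For the cone property (b), I would follow the computation in the proof of Lemma \ref{dmmc-thm1} verbatim: given $(\phi,u)\in\cF_\gl$ and $t\ge 0$, the function $w:=tu$ satisfies, formally, $\gl w + \cL_\ga w = t(\gl u+\cL_\ga u)\le t(\gl u + F_\phi[u]+\phi(x,\ga))\le t\phi(x,\ga)$ for all $(x,\ga)\in\b$, i.e. $\gl(tu)+F_{t\phi}[tu]\le 0$ in $\t$, and $t\phi\in\Phi^+$ (or $t\phi=0$, handled trivially since $0$ is a subsolution of $\gl u = 0$ — but note the vertex $0$ lies in $\cG_{z,\gl}$, which is what ``vertex at the origin'' asserts). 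This formal computation is justified in the viscosity sense exactly as before, using positive scaling of test functions; no new difficulty arises here.

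For the convexity (a), I would reproduce the proof of Lemma \ref{dmmc-thm2}, with two changes. First, replace every invocation of \erf{CP'} by the comparison principle of Lemma \ref{thm1-sec3}: in that proof, the auxiliary function $w=-t^{-1}(1-t)u^1+t^{-1}\psi$ is shown to be a supersolution of $\gl w + F_{\phi^0}[w]=t^{-1}\gd$ in $U$, and then one compares $u^0$ (a subsolution of $\gl u + F_{\phi^0}[u]\le 0$, hence of $\gl u+F_{\phi^0}[u]=0\le t^{-1}\gd$) against $w$; since $\phi^0\in\Phi^+$ and both functions are continuous, Lemma \ref{thm1-sec3} applies and gives $\sup_U(u^0-w)\le\max_{\pl U}(u^0-w)$, contradicting the strict interior maximum of $u^t-\psi=t(u^0-w)$. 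Second, one must track that $\phi^t:=t\phi^1+(1-t)\phi^0\in\Phi^+$, which holds by the closure of $\Phi^+$ under convex combinations noted above. The algebraic identity for $\cL_\ga\psi(y)-t\phi^0(y,\ga)-(1-t)\phi^1(y,\ga)$ and the resulting superadditivity bound $F_{\phi^t}[\psi](y)\le tF_{\phi^0}[\eta](y)+(1-t)F_{\phi^1}[(1-t)^{-1}\psi-t(1-t)^{-1}\eta](y)$ go through unchanged, since they only use the definition of $F_\phi$ as a sup over $\ga\in\cA$ and linearity of $\cL_\ga$; crucially the sup is now genuinely finite-valued because $\phi^0,\phi^1,\phi^t\in\Phi^+$.

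The main obstacle is not any single hard estimate but rather the bookkeeping needed to confirm that every object produced along the way stays inside the admissible class $\Phi^+$ (so that $F_{\phi}$ remains a well-defined continuous function and Lemma \ref{thm1-sec3} is applicable), and that the viscosity-solution manipulations — multiplying test functions by $t^{-1}$ or $(1-t)^{-1}$, forming the auxiliary supersolution $w$ — are legitimate, which is exactly where the restriction to $\Phi^+$ pays off compared to the naive attempt with arbitrary $\phi\in C(\b)$. In short, I expect the proof to read: ``The argument is identical to that of Lemmas \ref{dmmc-thm2} and \ref{dmmc-thm1}, with \erf{CP'} replaced by Lemma \ref{thm1-sec3} and with the elementary observation that $\Phi^+$ is itself a convex cone closed under the operations used,'' and the only place demanding genuine care is verifying this last closure claim together with the continuity of $F_\phi$ for $\phi\in\Phi^+$.
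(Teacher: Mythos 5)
Your proposal is essentially the paper's own argument: the paper omits the proof of this lemma, stating only that it goes as in Lemma \ref{dmmc-thm1}, and your plan (repeat the proofs of Lemmas \ref{dmmc-thm2} and \ref{dmmc-thm1} with $\Phi^+$ in place of $C(\b)$, with Lemma \ref{thm1-sec3} supplying the comparison principle for $F_{\phi}$, $\phi\in\Phi^+$, and with the observation that $\Phi^+$ is a convex cone on which $F_\phi$ is continuous) is exactly what is intended. One small correction: since $\Phi^+$ requires $t>0$, the pair $(0,0)$ does not belong to $\cF_\gl$, and your parenthetical claim that the origin lies in $\cG_{z,\gl}$ is false in general (an element $tL+\chi-\gl u(z)$ with $t>0$ cannot vanish identically unless $L$ is independent of $\ga$); so the scaling argument should be run only for $t>0$. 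This does not harm anything: ``cone with vertex at the origin'' is used later (in Theorem \ref{thm4-sec3}) only through the identity $\inf_{f\in\cG_{z,\gl}}\lan\mu,f\ran=0$ for $\mu\in\cG_{z,\gl}{'}$, which follows from closedness under strictly positive scaling by letting the scaling parameter tend to $0+$, without the origin itself belonging to $\cG_{z,\gl}$.
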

We can prove Lemma \ref{thm3-sec3} in a similar way to 
that of Lemma \ref{dmmc-thm1}, so we omit the proof.

Let $\cR_{L}$ denote the 
space of all Radon measures $\mu$ on $\b$ such that 
$\lan |\mu|, 1+|L|\ran<\infty$, where 
$|\mu|$ denotes the total variation of $\mu$, and, 
for any Borel subset $K$ of $\cA$, let $\cP_K$ denote 
the subset of all 
probability measures $\mu$ in $\cR_{L}$ that have support 
in $\t\tim K$. That is, $\cP_K=\{\mu\mid \mu 
\text{ is a Radon probability measure on }\b, \ \mu(\t\tim K)=1\}$.

For $(z,\gl)\in\t\tim[0,\,\infty)$, we define the set 
${\cG_{z,\gl}}'\subset\cR_{L}$ (denoted also by $\cG_0{'}$ if $\gl=0$) by 
\[
{\cG_{z,\gl}}':=\{\mu\in \cR_{L}\mid \lan\mu,f\ran\geq 0
\ \ \text{ for all }f\in\cG_{z,\gl}\},  
\]
and a key ingredient for our proof of Theorem \ref{thm2-sec3}
is stated as follows. 

\begin{thm}\label{thm4-sec3}Assume \erf{F1}, \erf{F2}, \erf{L},  
\erf{CP''}, \erf{EP}, and, if $\gl=0$,  \erf{Z} as well. Let 
$(z,\gl)\in\t\tim[0,\,\infty)$ and $v^\gl$ be 
a solution of \erf{DP}. Let $K_0$ be the compact set given by \erf{L}. 
If $K$ is a Borel subset of $\cA$ such that either 
$K\supsetneq K_0$ or $K=\cA$, then  
\begin{equation}\label{thm4-3-00}
\gl v^\gl(z)=\inf_{\mu\in\cP_{K}\,\cap\, {\cG_{z,\gl}}'}
\int_{\b}L(x,\ga)\mu(dx d\ga).
\end{equation}
\end{thm}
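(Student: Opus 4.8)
\textbf{Proof proposal for Theorem \ref{thm4-sec3}.}

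The plan is to adapt the argument of Theorem \ref{thm3-sec2} to the noncompact setting, using $\cP_K$ in place of the full simplex $\cP$ and exploiting hypothesis \erf{L} to keep everything finite. The inequality $\gl v^\gl(z)\le \inf_{\mu\in\cP_K\cap{\cG_{z,\gl}}'}\lan\mu,L\ran$ is essentially free: since $(L,v^\gl)\in\cF_\gl$ (note $L\in\Phi^+$ because $L = 1\cdot L + 0$), we have $L-\gl v^\gl(z)\in\cG_{z,\gl}$, so for every $\mu\in\cP_K\cap{\cG_{z,\gl}}'$, $0\le\lan\mu,L-\gl v^\gl(z)\ran=\lan\mu,L\ran-\gl v^\gl(z)$. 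For the reverse inequality I would argue by contradiction, supposing $\gl v^\gl(z)+\ep<\inf_{\mu\in\cP_K\cap{\cG_{z,\gl}}'}\lan\mu,L\ran$ for some $\ep>0$, and then run the Sion minimax argument. The key difference from Section \ref{sec2} is that $\cP_K$ need not be weakly compact when $K$ is noncompact, so I cannot apply Sion's theorem directly on $\cP_K$.

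The way around this is to pass to a compact truncation. For a compact set $K_1$ with $K_0\subsetneq K_1\subseteq K$ (and $K_1=\cA$ if $\cA$ is compact), $\cP_{K_1}$ is weakly compact and convex, $\cG_{z,\gl}$ is a convex subset of $C(\b)$, and the bilinear functional $(\mu,f)\mapsto\lan\mu,L-f\ran$ is appropriately continuous/affine (here $\lan\mu,L\ran$ makes sense because $\mu\in\cR_L$ and every $f\in\cG_{z,\gl}$ has the form $tL+\chi-\gl u(z)$ with $\chi\in C(\t)$, so $L-f$ is again of the form $(1-t)L+(\text{continuous})$, which is $\mu$-integrable). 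Applying Sion's theorem on $\cP_{K_1}$ gives $\inf_{\mu\in\cP_{K_1}\cap{\cG_{z,\gl}}'}\lan\mu,L\ran = \sup_{f\in\cG_{z,\gl}}\inf_{\mu\in\cP_{K_1}}\lan\mu,L-f\ran$; if I can show the left side does not exceed $\gl v^\gl(z)+\ep$ — equivalently produce $(\phi,u)\in\cF_\gl$ with $\gl v^\gl(z)+\ep<\inf_{\mu\in\cP_{K_1}}\lan\mu,L-\phi+\gl u(z)\ran$ — then, since $\cP_{K_1}$ contains all Dirac masses $\gd_{(x,\ga)}$ with $\ga\in K_1$, I get $\gl v^\gl(z)+\ep<L(x,\ga)-\phi(x,\ga)+\gl u(z)$ for all $(x,\ga)\in\t\tim K_1$, and then proceed exactly as in Theorem \ref{thm3-sec2}: rewrite this as $\gl u+F_\phi[u]\le\gl(u-v^\gl)(z)-\ep$ on the reduced control region, deduce $\gl u+F[u]\le\gl(u-v^\gl)(z)-\ep$, and invoke Lemma \ref{thm1-sec3} (the comparison principle for $F_\phi$ with $\phi\in\Phi^+$) to reach a contradiction — with the case $\gl=0$ handled by Proposition \ref{exist1}(ii) under \erf{Z}, exactly as before. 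The subtlety is that on $\cA\setminus K_1$ one only has $F_\phi$ rather than $F$ outside $K_1$, but that is precisely what hypothesis \erf{L} controls: outside $K_0$ (hence outside $K_1$), $L\ge L_0=-\min_x F(x,0,0)$, so the supremum defining $F$ over $\cA\setminus K_1$ is dominated and the reduction from $F_\phi$ to $F$ goes through.

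The remaining point is to transfer the identity from $\cP_{K_1}$ back to $\cP_K$, i.e. to see that $\inf_{\mu\in\cP_{K_1}\cap{\cG_{z,\gl}}'}\lan\mu,L\ran = \inf_{\mu\in\cP_K\cap{\cG_{z,\gl}}'}\lan\mu,L\ran$ for all admissible $K_1$. One direction is monotonicity of the infimum in $K_1$; for the other, I would argue that $\cP_{K_0}\cap{\cG_{z,\gl}}'$ already captures the infimum over $\cP_K\cap{\cG_{z,\gl}}'$, using \erf{L}: given $\mu\in\cP_K\cap{\cG_{z,\gl}}'$, one cannot improve $\lan\mu,L\ran$ by putting mass on $\t\tim(\cA\setminus K_0)$, since $L\ge L_0$ there, and one checks (testing against $\phi$ of the form $tL+\chi\in\Phi^+$, which is what $\cG_{z,\gl}$ consists of) that transporting that mass onto $\t\tim K_0$ keeps the measure in ${\cG_{z,\gl}}'$ while not increasing $\lan\mu,L\ran$. \textbf{The main obstacle} I anticipate is exactly this last step — verifying that the truncation/transport of a dual-cone measure stays in the dual cone and controlling the tail $\t\tim(\cA\setminus K_0)$ carefully — together with the care needed to ensure all pairings $\lan\mu,f\ran$ are well-defined on $\cR_L$ with the $1+|L|$ weight. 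The Sion/minimax and viscosity-comparison parts are a direct transcription of Section \ref{sec2}.
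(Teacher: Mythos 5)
Your overall architecture is the same as the paper's (truncate to a compact $K_1$ with $K_0\subsetneq K_1\subseteq K$, apply Sion's minimax theorem on the compact convex set $\cP_{K_1}$, extract $(\phi,u)\in\cF_\gl$ with $\phi=tL+\chi$, test with Dirac masses, convert to a subsolution inequality for $F$, and conclude by comparison), but the step you dispose of in one sentence --- ``the reduction from $F_\phi$ to $F$ goes through because $L\ge L_0$ outside $K_0$'' --- is exactly where the real work lies, and your justification has the wrong sign in half of the cases. Testing Dirac masses only gives $(t-1)L(x,\ga)+\chi(x)<\gl(u-v^\gl)(z)-\ep$ for $(x,\ga)\in\t\tim K_1$, whereas passing from $\gl u+F_\phi[u]\le 0$ to $\gl u+F[u]\le\gl(u-v^\gl)(z)-\ep$ requires this inequality on all of $\b$. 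When $t\le 1$ the coefficient $t-1$ is nonpositive, so largeness of $L$ on $\cA\setminus K_0$ indeed helps and the extension works (this part of your sketch is fine, and it does not even use $\ga_1$). But when $t>1$ the term $(t-1)L$ is large positive precisely where $L$ is large, so the pointwise inequality cannot extend off $K_0$, and no subsolution inequality for $u$ itself with error $-\ep$ follows; hypothesis \erf{L} works against you there, not for you. The paper's proof treats $t>1$ by a different maneuver: the subsolution property of $v^\gl$ at a maximum point gives $\gl v^\gl(z)\le L_0$; the extra point $\ga_1\in K\setminus K_0$, where $L(\cdot,\ga_1)\ge L_0$, is used (this is its only role, cf.\ Remark \ref{rem2-26}) to deduce $\chi<-t\gl v^\gl(z)+\gl u(z)-\ep$ on $\t$; and one then shows that the \emph{rescaled} function $w=u/t$ is a subsolution of $\gl w+F[w]\le \gl(w-v^\gl)(z)-\ep/t$, after which \erf{CP''} (or Proposition \ref{exist1}\,(ii) with \erf{Z} when $\gl=0$) yields the contradiction. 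None of this case analysis, the bound $\gl v^\gl(z)\le L_0$, or the rescaling $\gth=1/t$ appears in your proposal, so the proof as written is incomplete at its crucial point.

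By contrast, the step you flagged as the ``main obstacle'' is not needed at all. Since $\cP_{K_1}\cap\cG_{z,\gl}'\subseteq\cP_{K}\cap\cG_{z,\gl}'$, the contradiction hypothesis $\gl v^\gl(z)+\ep<\inf_{\mu\in\cP_{K}\cap\cG_{z,\gl}'}\lan\mu,L\ran$ immediately implies the same inequality with $K_1$ in place of $K$; equivalently, proving the lower bound $\gl v^\gl(z)\ge\inf_{\mu\in\cP_{K_1}\cap\cG_{z,\gl}'}\lan\mu,L\ran$ already gives it for $K$ by monotonicity of the infimum under shrinking the feasible set. No mass-transport argument showing that truncated measures stay in the dual cone is required; the equality of the two infima is a consequence of the theorem, not an ingredient of its proof.
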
 

In the theorem above, it is always possible
to choose a compact set $K\subset\cA$ so that either $K=\cA$ or $K\supsetneq K_0$. Indeed, if $\cA$ is compact, we may just choose $K=\cA$. Otherwise, we may choose 
$\ga_1\in\cA\setminus K_0$ and set  $K:=K_0\cup\{\ga_1\}$. Clearly, 
$K$ is compact and satisfies $K\supsetneq K_0$.  

A natural question is whether it is possible or not to 
replace $K$ by $K_0$ in formula \erf{thm4-3-00}. 
For instance, if $\cA$ is connected, then \erf{thm4-3-00} holds with $K_0$ in place of $K$. (See Remark \ref{rem2-26})

In the general situation where normalization \erf{Z} is dropped, 
we have the following formula 
for the critical value $c$.

\begin{cor}\label{cor2} Assume \erf{F1}, \erf{F2}, \erf{L},  
\erf{CP''}, and that \erf{EC} has a solution $(u,c)\in C(\t)\tim\R$. 
Let $K_0$ be the compact subset of $\cA$ from \erf{L}. 
Then, for any Borel set $K\subset \cA$ such that either 
$K\supsetneq K_0$ or $K=\cA$, 
we have 
\begin{equation}\label{thm4-3-000}
c=-\inf_{\mu\in\cP_{K}\,\cap\, {\cG_{0}}'}
\int_{\b}L(x,\ga)\mu(dx d\ga).
\end{equation}
\end{cor}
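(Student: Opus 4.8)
The plan is to obtain \erf{thm4-3-000} as a direct consequence of Theorem \ref{thm4-sec3} with $\gl=0$, in exactly the way Corollary \ref{cor1} was deduced from Theorem \ref{thm2-sec2}, after the usual normalization that kills the critical value. First I would let $(u,c)\in C(\t)\tim\R$ be a solution of \erf{EP} (available either by assumption or, since \erf{CP''} implies \erf{CP}, from \erf{EC} via Proposition \ref{prop-fund}) and set $\widetilde F:=F-c$ on $\t\tim\R^n\tim\bS^n$. Then I would check that $\widetilde F$ satisfies the hypotheses of Theorem \ref{thm4-sec3} with $\gl=0$: it has the form \erf{F1} with the same $a,b$ but with $L$ replaced by $\widetilde L:=L+c$; it is continuous, so \erf{F2} holds; the comparison principle \erf{CP''} for $\widetilde F$ is equivalent to that for $F$, since it only amounts to shifting $\eta$ by the constant $c$; and \erf{L} holds for $\widetilde L$ with the \emph{same} compact set $K_0$, because adding $c$ shifts both $\inf_{\t\tim(\cA\setminus K_0)}\widetilde L$ and $\widetilde L_0=\max_{x}\inf_{\ga}\widetilde L(x,\ga)$ by $c$, preserving the defining inequality. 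Finally $(u,0)$ solves \erf{EP} for $\widetilde F$, so the critical value of $\widetilde F$ is $0$ and \erf{Z} holds for $\widetilde F$.

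The one point that genuinely needs care is that the cone $\cG_0$ and its dual cone ${\cG_0}'$ are \emph{literally unchanged} when $F$ is replaced by $\widetilde F$. This is because the cone $\Phi^+$ of functions $tL+\chi$ with $t>0$ and $\chi\in C(\t)$ is the same whether built from $L$ or from $\widetilde L=L+c$, as $t\widetilde L+\chi=tL+(tc+\chi)$ and $tc+\chi$ ranges over all of $C(\t)$; moreover $F_\phi$ depends only on $a,b,\phi$ and not on $L$, so the set $\cF_0$, hence $\cG_0$, is the same for $\widetilde F$ as for $F$. Likewise the ambient space $\cR_{L}$ in which ${\cG_0}'$ lives coincides with $\cR_{\widetilde L}$, since $1+|L|$ and $1+|L+c|$ are comparable. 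Thus $\cP_{K}\,\cap\,{\cG_0}'$ is the same set in both settings.

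Now I would apply Theorem \ref{thm4-sec3} to $\widetilde F$ with $\gl=0$, an arbitrary $z\in\t$, and the solution $v^0:=u$ of $\widetilde F[v^0]=0$. For any Borel $K\subset\cA$ with $K\supsetneq K_0$ or $K=\cA$, formula \erf{thm4-3-00} gives
\[
0=0\cdot v^0(z)=\inf_{\mu\in\cP_{K}\,\cap\,{\cG_0}'}\int_{\b}\widetilde L(x,\ga)\,\mu(dxd\ga)
=\inf_{\mu\in\cP_{K}\,\cap\,{\cG_0}'}\left(\int_{\b}L(x,\ga)\,\mu(dxd\ga)+c\right),
\]
where the last equality uses that every $\mu\in\cP_{K}$ is a probability measure. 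Rearranging yields \erf{thm4-3-000}. I do not expect any real obstacle beyond this bookkeeping; the only subtlety, and the reason I would spell the reduction out explicitly, is the invariance of $\cG_0$ and ${\cG_0}'$ under the normalization $F\mapsto F-c$.
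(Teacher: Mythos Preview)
Your proposal is correct and follows essentially the same route as the paper: normalize by replacing $(F,L)$ with $(F-c,L+c)$, verify that \erf{F1}, \erf{F2}, \erf{L}, \erf{CP''} and \erf{Z} hold for the shifted data, observe that $\Phi^+$, $\cR_L$, $\cF_0$, $\cG_0$, ${\cG_0}'$ and $\cP_K$ are unchanged under this shift, and then apply Theorem \ref{thm4-sec3} with $\gl=0$. The invariance of $\cG_0$ under the normalization, which you flag as the main subtlety, is precisely the point the paper also singles out.
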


Conceding Theorem \ref{thm4-sec3}, we give a proof of Corollary 
\ref{cor2}. 

\begin{proof}  
Set $L_c:=L+c$ and note that $F_{L_c}=F-c$ and 
$u$ is a solution of \erf{EP'}, 
with $\phi=L_c$. Note that hypotheses \erf{F1}, \erf{F2}, \erf{L}
and \erf{CP''} hold, with $(F,\, L)$ replaced by $(F-c,L_c)$. 
Concerning \erf{L}, we note that the inequality 
\[
\inf_{\t\tim(\cA\setminus K_0)}L\geq L_0
=\max_{\t}\inf_{\ga\in K_0}L(x,\ga)
\]
implies 
\[
\inf_{\t\tim(\cA\setminus K_0)}L_c\geq L_0+c
=\max_{\t}\inf_{\ga\in K_0}L_c(x,\ga). 
\]

Fix any Borel subset $K$ of $\cA$ such that $K\supsetneq K_0$.
Observe that, if we set $\Phi_L^+:=\Phi$, then  
\[
\Phi_L^+=\{t L+\chi\mid t>0,\ \chi\in C(\t)\}
=\{tL_c+\chi\mid t>0,\, \chi\in C(\t)\}=\Phi_{L_c}^+,
\]
and, similarly, that $\,\cR_L=\cR_{L_c}$. 
The definitions of $\cF_0$, $\cG_0$, $\cG_0{'}$, and  $\cP_K$ 
depend on $L$ only through $\Phi^+$ and $\cR_L$, which 
assures that $\cF_0$, $\cG_0$, $\cG_0{'}$, and $\cP_K$  
are the same as those for $L_c$. Thus, we find 
from Theorem \ref{thm4-sec3} that
\[
0=\inf_{\mu\in\cP_K\cap\cG_0^{'}}\lan \mu, L_c\ran,
\]
which shows \erf{thm4-3-000}.  
\end{proof}

The next lemma is similar to Lemma \ref{thm7-sec2}.

\begin{lem}\label{conv-mu}Let $K$ be a compact subset of $\cA$ 
and $\{\mu_j\}_{j\in\N}\subset\cP_{K}$. 
Then there exists a subsequence $\{\mu_{j_k}\}_{k\in\N}$ of 
$\{\mu_j\}_{j\in\N}$ that converges to some $\mu\in\cP_{K}$ 
weakly in the sense of measure as $k\to\infty$ such that  
$\lan\mu_{j_k},L\ran \to \lan\mu, L\ran$ as $k\to\infty$.  
Furthermore, if $\mu_j\in\cG_{z,\gl_j}{'}$ for all $j\in\N$ 
and some 
$z\in\t$ and $\{\gl_j\}_{j\in\N}\subset[0,\infty)$ 
and if $\gl_j\to \gl$ as $j\to\infty$ for some $\gl\in[0,\,\infty)$, then $\mu\in \cG_{z,\gl}{'}$. 
\end{lem}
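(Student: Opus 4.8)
The plan is to prove Lemma~\ref{conv-mu} in two stages: first the extraction of a weakly convergent subsequence with the convergence $\lan\mu_{j_k},L\ran\to\lan\mu,L\ran$, and then the passage of the membership $\mu_{j_k}\in\cG_{z,\gl_{j_k}}{'}$ to the limit.

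\textbf{Weak compactness and convergence of the $L$-integrals.} Since $K$ is compact, $\b$ restricted to $\t\tim K$ is a compact metric space, so $\cP_K$ is identified with the set of Radon probability measures on that compact space. By the Prokhorov theorem (or direct sequential compactness of probability measures on a compact metric space, as recalled in the excerpt right before Lemma~\ref{thma2}), there is a subsequence $\{\mu_{j_k}\}$ converging weakly in the sense of measure to some $\mu\in\cP_K$; in particular $\mu$ again has support in $\t\tim K$ and is a probability measure, and it lies in $\cR_L$ because $L$ is bounded on the compact set $\t\tim K$. For the same reason, $L\restriction_{\t\tim K}$ is bounded and continuous, so $\lan\mu_{j_k},L\ran=\int_{\t\tim K}L\,d\mu_{j_k}\to\int_{\t\tim K}L\,d\mu=\lan\mu,L\ran$ directly from the definition of weak convergence. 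This is the routine part.

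\textbf{Passing $\mu_{j_k}\in\cG_{z,\gl_{j_k}}{'}$ to the limit.} Fix $(\phi,u)\in\cF_\gl$, so that $\phi-\gl u(z)\in\cG_{z,\gl}$, and we must show $\lan\mu,\phi-\gl u(z)\ran\geq 0$. Here $\phi=tL+\chi$ for some $t>0$ and $\chi\in C(\t)$, and $u$ is a subsolution of $\gl u+F_\phi[u]=0$. The idea, exactly as in the proof of Lemma~\ref{thm7-sec2}, is to perturb $(\phi,u)$ into an element of $\cF_{\gl_{j_k}}$: set $\phi_k:=\phi+(\gl_{j_k}-\gl)u\in\Phi^+$ (note $tL+\chi+(\gl_{j_k}-\gl)u$ is still of the form $tL+(\text{continuous function of }x)$, so it lies in $\Phi^+$), and check that $u$ is a subsolution of $\gl_{j_k}u+F_{\phi_k}[u]=0$ in $\t$, since $\gl_{j_k}u+F_{\phi_k}[u]=\gl u+(\gl_{j_k}-\gl)u+F_\phi[u]-(\gl_{j_k}-\gl)u=\gl u+F_\phi[u]\leq 0$. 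Thus $(\phi_k,u)\in\cF_{\gl_{j_k}}$, whence $\phi_k-\gl_{j_k}u(z)\in\cG_{z,\gl_{j_k}}$ and
\[
0\leq \lan\mu_{j_k},\phi_k-\gl_{j_k}u(z)\ran
=\lan\mu_{j_k},\phi\ran+(\gl_{j_k}-\gl)\lan\mu_{j_k},u\ran-\gl_{j_k}u(z).
\]
Now let $k\to\infty$. Since $\phi=tL+\chi$ with $\chi\in C(\t)$ and all $\mu_{j_k}$ are probability measures supported in the compact set $\t\tim K$, we have $\lan\mu_{j_k},\phi\ran=t\lan\mu_{j_k},L\ran+\lan\mu_{j_k},\chi\ran\to t\lan\mu,L\ran+\lan\mu,\chi\ran=\lan\mu,\phi\ran$, using the first part for the $L$-term and weak convergence together with continuity of $\chi$ for the other. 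The term $(\gl_{j_k}-\gl)\lan\mu_{j_k},u\ran$ tends to $0$, because $\gl_{j_k}-\gl\to 0$ while $|\lan\mu_{j_k},u\ran|\leq\|u\|_{C(\t)}$ is bounded uniformly in $k$. Finally $\gl_{j_k}u(z)\to\gl u(z)$ trivially. Passing to the limit yields $0\leq\lan\mu,\phi\ran-\gl u(z)=\lan\mu,\phi-\gl u(z)\ran$, which is the desired inequality. Since $(\phi,u)\in\cF_\gl$ was arbitrary, $\mu\in\cG_{z,\gl}{'}$.

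\textbf{Expected main obstacle.} The only genuine subtlety is the bookkeeping that keeps $\phi_k$ inside $\Phi^+$ (so that $F_{\phi_k}$ is a well-defined continuous function and the viscosity machinery applies) and the uniform boundedness $\|v^{\gl_{j}}\|_{C(\t)}$-type estimate needed to kill the cross term $(\gl_{j_k}-\gl)\lan\mu_{j_k},u\ran$ --- but here $u$ is a \emph{fixed} test function from $\cF_\gl$, not one of the $v^{\gl_j}$, so its sup-norm bound is automatic and this difficulty evaporates. Everything else is a direct transcription of the argument for Lemma~\ref{thm7-sec2}, upgraded to allow $\gl_j\to\gl$ with $\gl$ possibly positive rather than forcing $\gl=0$, and to track the $\cR_L$/$\cP_K$ structure in the non-compact setting.
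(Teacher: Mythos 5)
Your proof is correct and follows essentially the same route as the paper: extract a weakly convergent subsequence using the compact support in $\t\tim K$, and pass the dual-cone membership to the limit via the perturbed pair $(tL+\chi+(\gl_{j_k}-\gl)u,\,u)\in\cF_{\gl_{j_k}}$. The only cosmetic difference is that you obtain $\lan\mu_{j_k},L\ran\to\lan\mu,L\ran$ by viewing the measures as probability measures on the compact space $\t\tim K$, whereas the paper uses a cutoff $\eta\in C_{\rm c}(\cA)$ with $\eta=1$ on $K$; the two devices are equivalent.
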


\begin{proof} Due to the obvious tightness of $\{\mu_j\}_{j\in\N}$, 
there exists its subsequence $\{\mu_{j_k}\}_{k\in\N}$ that converges,
weakly in the sense of measure, 
to a Radon probability measure $\mu$ on $\b$. It is clear that 
$\mu\in\cP_K$. 

Observe next that, for any $\eta\in C_{\rm c}(\cA)$ 
such that $\eta=1$ on $K$,
\[
\lan \mu_{j_k},L\ran=\lan \mu_{j_k},\eta L\ran \to 
\lan \mu, \eta L\ran=\lan \mu,L\ran \ \ \text{ as }\ k\to\infty.
\] 
 
Now, we fix $z\in\t$ and $\{\gl_j\}_{j\in\N}\subset[0,\infty)$, and assume that $\gl_j\to\gl$ as $j\to\infty$ for some $\gl\in\R$ and  
that $\mu_j\in\cG_{z,\gl_j}{'}$ for all $j\in\N$. Let 
$(tL+\chi,u)\in\cF_{\gl}$, where $t>0$ and $\chi\in C(\t)$, and  
observe that 
$(tL+\chi+(\gl_j-\gl)u,\,u)\in\cF_{\gl_j}$ for all $j\in\N$. 
 
Since $\mu_{j_k}\in\cG_{z,\gl_{j_k}}{'}$ and 
$(tL+\chi+(\gl_{j_k}-\gl)u,\,u)\in\cF_{\gl_{j_k}}$, 
we get 
\begin{equation}\label{ineq-k}
\lan\mu_{j_k},tL+\chi+(\gl_{j_k}-\gl)u-\gl_{j_k} u(z)\ran\geq 0  
\ \ \ \text{ for all }\ k\in\N.
\end{equation}
We have already seen that $\lan\mu_{j_k},L\ran \to \lan\mu,L\ran$ as 
$k\to\infty$. Also, 
since the functions $\chi+(\gl_{j_k}-\gl)u$ are 
bounded and continuous as a function on $\b$ and converge to $\chi$ 
uniformly on $\b$,  
we see that, as $k\to\infty$,  
\[
\lan \mu_{j_k},\chi+(\gl_{j_k}-\gl)u\ran \to \lan \mu,\chi\ran. 
\]
Hence, we find from \erf{ineq-k}, in the limit as $k\to\infty$, that $\lan \mu,t L+\chi-\gl u(z)\ran\geq 0$, which implies that 
$\mu\in\cG_{z,\gl}{'}$.  

The proof is now complete. 
\end{proof}

By Lemma \ref{conv-mu}, the set $\cP_{K}\,\cap\, {\cG_{z,\gl}}'$ 
is compact for any compact set $K\subset\cA$, 
and, under the hypotheses of Theorem \ref{thm4-sec3},  according to  Theorem \ref{thm4-sec3} and the consecutive remark, 
if the compact set $K\subset\cA$ is large enough in the sense of inclusion, then the set $\cP_{K}\,\cap\, {\cG_{z,\gl}}'$ is non-empty 
and the functional 
\[
\cP_{K}\,\cap\, {\cG_{z,\gl}}'\ni\mu\mapsto \int_{\b}L(x,\ga)\mu(dx d\ga)
\]
has a minimizer achieving the minimum value $\gl v^\gl(z)$.   
With an abuse of notation, we denote the set of such 
minimizers again by $\cM_{L,z,\gl}$ or, if $\gl=0$, by $\cM_{L,0}$.
We call any $\mu\in\cM_{L,0}$ (resp., $\mu\in\cM_{L,z,\gl}$ if $\gl>0$) 
a viscosity Mather measure (resp., a discounted Mather measure) associated with \erf{DP}. 
We point out that $\cM_{L,0}$ and $\cM_{L,z,\gl}$ may depend on our choice of $\Phi^+$ a priori. 
The authors do not know whether these are independent of the choice of 
$\Phi^+$ or not.

\begin{proof}[Proof of Theorem {\rm\ref{thm4-sec3}}] 
 Since $(L,v^\gl)\in\cF_{\gl}$,  for any $\mu\in\cP_{\cA}\cap\cG_{z,\gl}{'}$,
we have
\[
0\leq \lan \mu,L-\gl v^\gl(z)\ran=
\lan \mu,L\ran-\gl v^\gl(z),
\]
and, consequently, 
\begin{equation} \label{2-25-1}
\gl v^\gl(z)\leq 
\inf_{\mu\in\cP_{\cA}\,\cap\, {\cG_{z,\gl}}'}\lan \mu, L\ran.
\end{equation}

Next, to prove \erf{thm4-3-00}, we fix any Borel set 
$K\subset \cA$ such that either $K=\cA$ or $K\supsetneq K_0$, and 
show 
\begin{equation}\label{2-25-2}
\gl v^\gl(z)\geq 
\inf_{\mu\in\cP_{K}\,\cap\, {\cG_{z,\gl}}'}\lan \mu, L\ran,
\end{equation}
which, together with \erf{2-25-1}, yields \erf{thm4-3-00}. 

We set $K_1=\cA$ if $K_0=\cA$. Otherwise, 
we fix a point $\ga_1\in K\setminus K_0$
and set $K_1=K_0\cup \{\ga_1\}$. 
Notice that $K_1$ is a compact set and $K_1\subset K$.

To show \erf{2-25-2}, it is enough to prove 
\begin{equation}\label{2-26-1}
\gl v^\gl(z)\geq 
\inf_{\mu\in\cP_{K_1}\,\cap\, {\cG_{z,\gl}}'}\lan \mu, L\ran.
\end{equation}
We argue by contradiction, suppose that 
\[
\gl v^\gl(z)<
\inf_{\mu\in\cP_{K_1}\,\cap\, {\cG_{z,\gl}}'}\lan \mu, L\ran,
\]
select $\ep>0$ so that 
\begin{equation}\label{2-26-2}
\gl v^\gl(z)+\ep<
\inf_{\mu\in\cP_{K_1}\,\cap\, {\cG_{z,\gl}}'}\lan \mu, L\ran,
\end{equation}
and will get a contradiction.

We note by the cone property of $\cG_{z,\gl}$ (Lemma \ref{thm3-sec3}) that
\[
\inf_{f\in\cG_{z,\gl}}\lan \mu, f\ran=
\begin{cases}
0&\text{ if } \ \mu\in \cP_{K_1}\,\cap\, \cG_{z,\gl}{'}, \\[3pt]
-\infty  \ \ & \text{ if } \ \mu\in \cP_{K_1}\setminus {\cG_{z,\gl}}',
\end{cases}
\]
and, moreover, that
\[
\inf_{\mu\in\cP_{K_1}\,\cap\, \cG_{z,\gl}{'}}\lan \mu, L\ran
=\inf_{\mu\in\cP_{K_1}}\left(\lan \mu, L\ran
-\inf_{f\in\cG_{z,\gl}}\lan \mu, f\ran\right)
=\inf_{\mu\in\cP_{K_1}}\sup_{f\in\cG_{z,\gl}}
\lan \mu, L-f \ran. 
\] 
Noting by Lemma \ref{conv-mu} that  
$\cP_{K_1}$ is a convex compact space, we apply    
Sion's minimax theorem, to get 
\[
\inf_{\mu\in\cP_{K_1}}\sup_{f\in\cG_{z,\gl}}\lan\mu, L-f\ran
=\sup_{f\in\cG_{z,\gl}}\inf_{\mu\in\cP_{K_1}}\lan \mu,L-f\ran.
\]
Hence, we have 
\[
\inf_{\mu\in\cP_{K_1}\,\cap\, \cG_{z,\gl}{'}}\lan \mu, L\ran
=\sup_{f\in\cG_{z,\gl}}\inf_{\mu\in\cP_{K_1}}\lan \mu,L-f\ran.
\]

Combining the above with \erf{2-26-2} yields 
\begin{equation}\label{thm4-3-4}
\gl v^\gl(z)+\ep<
\sup_{f\in\cG_{z,\gl}}\inf_{\mu\in\cP_{K_1}}
\lan\mu, L-f\ran.
\end{equation} 
Hence, we may choose 
$(\phi,u)\in\cF_\gl$ such that 
\begin{equation} \label{thm4-3-5}\gl v^\gl(z)+\ep<
\lan \mu,L-\phi+\gl u(z)\ran \ \ \ \text{ for all }\mu\in\cP_{K_1},   
\end{equation}
and, moreover, $t>0$ and $\chi\in C(\t)$ so that $\phi=tL+\chi$.

We show that we can  
choose a constant $\gth>0$ such that $w:=\gth u$ is a 
subsolution of 
\begin{equation}\label{thm4-3-10}
\gl w+F[w]=\gl (w-v^\gl)(z)-\gth\ep\ \ \text{ in }\t.  
\end{equation}
Once this is done, we easily get a contradiction. 
Indeed, if $\gl>0$, then 
$\gz:=w+\gl (v^\gl-w)(z)+\gl^{-1}\gth\ep$ is a subsolution of $\gl \gz+F[\gz]=0$ in $\t$ 
and comparison principle \erf{CP''}  
yields $\gz\leq v^\gl$ in $\t$, which, after evaluation at $z$, 
gives $\,\gl^{-1}\gth\ep\leq 0$. This is a contradiction. 
On the other hand, if $\gl=0$, then, by Proposition \ref{exist1} (ii), we get $w+C\leq v^\gl$ in $\t$ for all $C\in\R$, which is a 
contradiction.

To show \erf{thm4-3-10}, 
we divide our argument into two cases. 
The first case is that when $K_1=\cA$.  
Then, we have 
$\gd_{(x,\ga)}\in\cP_{K_1}$ for all $(x,\ga)\in\b$, and, 
from \erf{thm4-3-4}, we get
\[
\gl v^\gl(z)+\ep<L-\phi+\gl u(z) 
\ \ \text{ on }\b,
\]
which yields 
\[
F(x,p,X)\leq F_\phi(x,p,X)+\gl (u-v^\gl)(z)-\ep \ \ \text{ for all }\ (x,p,X)
\in\t\tim\R^n\tim\bS^n. 
\]
Thus, we see that $w:=u$ is a subsolution of \erf{thm4-3-10}, with
$\gth=1$.

The second case is that when $K_1=K_0\cup\{\ga_1\}$ for some 
$\ga_1\in K\setminus K_0$.   
By assumption \erf{L}, we have  
\begin{equation}\label{m-1}
\max_{x\in\t}\min_{\ga\in K_0}L(x,\ga)\,=: L_0\,\leq \
\inf_{\t \tim (\cA\setminus K_0)}L. 
\end{equation}
Since $\gd_{(x,\ga)}\in\cP_{K_1}$ for all $(x,\ga)\in\t\tim K_1$, 
from \erf{thm4-3-5}, we get 
\begin{equation}\label{thm4-3-8}
(t-1)L(x,\ga)+\chi(x)<\gl(u- v^\gl)(z)-\ep \ \ \text{ for all }(x,\ga)\in\t\tim K_1. 
\end{equation}

We split our further argument into two cases. 
Consider first the case when $t\leq 1$, and, maximizing
the both sides of \erf{thm4-3-8} in $\ga\in K_0$ yields 
\[
(t-1)\min_{\ga\in K_0}L(x,\ga)+\chi(x)<\gl(u- v^\gl)(z)-\ep \ \ \text{ for all }x\in\t,
\] 
and, furthermore,
\[
(t-1)L_0+\chi(x)<\gl(u- v^\gl)(z)-\ep \ \ \text{ for all }x\in\t.
\]
Combining this with \erf{m-1}, we find that 
for all $(x,\ga)\in\t\tim (\cA\setminus K_0)$,
\[
(t-1)L(x,\ga)+\chi(x) 
\leq (t-1)L_0+\chi(x)<\gl(u- v^\gl)(z)-\ep,
\]
which shows, together with \erf{thm4-3-8}, that 
\[
(t-1)L(x,\ga)+\chi(x)<\gl(u- v^\gl)(z)-\ep 
\ \ \text{ for all }(x,\ga)\in\t\tim \cA. 
\] 
Hence, we get 
\[
\phi=L+(t-1)L+\chi<L+\gl(u- v^\gl)(z)-\ep \ \text{ on }\b,
\]
and find that $w:=u$ is 
a subsolution of \erf{thm4-3-10}, with $\gth=1$. 
Notice that the choice of $\ga_1$ is of no use so far.

Secondly, we consider the case when $t> 1$ and observe 
by the viscosity property of $v^\gl$ that, 
for any maximum point $x_0\in\t$ of $v^\gl$, 
which always exists, we have
\[\begin{aligned}
0&\,\geq \gl v^\gl(x_0)+ F(x_0,0,0)= \gl v^\gl(x_0)+ 
\sup_{\ga\in\cA}(-L(x_0,\ga))
\\&\,\geq \gl v^\gl(x_0)-\max_{x\in\t}
\inf_{\ga\in\cA}L(x,\ga)\geq \gl v^\gl (x_0)-L_0, 
\end{aligned}\]
and, hence, 
\begin{equation}\label{m-4}
\gl v^\gl(z)\leq \gl v^\gl(x_0)\leq L_0. 
\end{equation}

By \erf{m-1} and \erf{thm4-3-8}, 
we have 
\[L(x,\ga_1)\geq L_0 \ \ \text{ and } \ \  
(t-1)L(x,\ga_1)+\chi(x)<\gl(u- v^\gl)(z)-\ep \ \ \text{ for all }x\in\t, 
\]
which shows that 
\[
(t-1)L_0+\chi(x)<\gl(u- v^\gl)(z)-\ep \ \ \text{ for all }x\in\t.
\]
Hence, using \erf{m-4}, we get  
\[
(t-1)\gl v^\gl(z)+\chi\leq
(t-1)L_0+\chi<\gl(u- v^\gl)(z)-\ep,
\]
which yields 
\[
\chi<-t\gl v^\gl(z)+\gl u(z)-\ep \ \ \text{ on }\t,
\]
and moreover
\[
\phi=t L+\chi 
<tL+t\gl(u/t- v^\gl)(z)-\ep \ \ \text{ in }\b.
\]
Thus, we get formally  
\[
\gl u+\cL_\ga u\leq \phi<tL+t\gl(u/t- v^\gl)(z) -\ep \ \ \text{ in }\t\tim\cA.
\] 
From this, we deduce that $w:=u/t$ is a subsolution of 
\erf{thm4-3-10}, with $\gth=1/t$. This completes the proof. 
\end{proof} 

\begin{remark} \label{rem2-26}
In the proof above, the real requirement on the 
choice of $\ga_1$ is that the inequality $L(x,\ga_1)\geq L_0$ should hold 
for all $x\in\t$. 
A trivial sufficient condition for this is the condition that $\ga_1\in K\setminus K_0$. 
If there exists $\ga_1\in K_0$ such that $L(x,\ga_1)\geq L_0$ for all $x\in\t$, then we can perform the proof above, with $K_1=K_0\cup \{\ga_1\}=K_0$, which yields formula \erf{thm4-3-00} with $K_0$ in place of $K$. 
For instance, if $\cA$ is connected, then \erf{L} guarantees the 
existence of such $\ga_1\in K_0$. To see this, one defines 
the continuous function $l$ on $\cA$ by setting $l(\ga)=\min_{x\in\t}L(x,\ga)$ and observes that $l(\cA)$ is an interval of $\R$, 
$l(\cA\setminus K_0)\subset [L_0,\,\infty)$ and $l(K_0)$ is compact, which is impossible if 
$l(K_0)\subset (-\infty,\, L_0)$.   
\end{remark}

The following proof of Theorem \ref{thm2-sec3} is parallel to 
that of Theorem \ref{thm1-sec2}, but we still give it here for the reader's convenience.

\begin{proof}[Proof of Theorem {\rm\ref{thm2-sec3}}] Let $(u_0,c)\in C(\t)\tim\R$ 
be a solution of \erf{EP}. The existence of such a solution and the uniqueness of the critical value $c$ is assured by Proposition 
\ref{prop-fund}. 
 
As we have seen in the proof of Theorem \ref{thm1-sec2}, the proof in 
the general case regarding $c$ is easily reduced to the special 
case when $c=0$. Henceforth, we are thus only concerned with the case when $c=0$.

As in the proof of Theorem \ref{thm1-sec2}, we see that 
$|v^\gl|\leq 2\|u_0\|_{C(\t)}$ on $\t$ for all $\gl>0$. This together with 
\erf{EC} guarantees that $\{v^\gl\}_{\gl>0}$ is relatively compact 
in $C(\t)$. Therefore, it is enough to show that
\begin{equation}\label{thm2-sec3-1}
\limsup_{\gl\to 0+}v^\gl(x)=\liminf_{\gl\to 0+}v^{\gl}(x) 
\ \ \text{ for all }\ x\in\t.
\end{equation}

To check the above convergence, we fix any $z\in\t$ and select 
two sequences $\{\gl_j\}_{j\in\N}\subset(0,\,1)$ and $\{\gd_j\}_{j\in\N}\subset(0,\,1)$, converging to zero, 
so that 
\[
\liminf_{\gl\to 0+}v^\gl(z)=\lim_{j\to\infty}v^{\gl_j}(z)
\ \ \ \text{ and } \ \ \ 
\limsup_{\gl\to 0+}v^\gl(z)=\lim_{j\to\infty}v^{\gd_j}(z).
\]
By passing to subsequences if needed, we may assume that 
for some $v,\,w\in C(\t)$,  
\[
\lim_{j\to\infty}v^{\gl_j}=v \ \ \text{ and } \ \ 
\lim_{j\to\infty}v^{\gd_j}=w 
\ \ \text{ in }\  C(\t). 
\] 

Fix $z\in \t$, and, for each $j\in\N$, thanks to Theorem \ref{thm4-sec3}, we choose  
a discounted Mather measure $\mu_j\in\cM_{L,z,\gl_j}$, so that 
$\lan\mu_j,L\ran=\gl_j v^{\gl_j}(z)$. We may assume that 
$\{\mu_j\}_{j\in\N}\subset \cP_{K}$ for some compact set 
$K\subset\cA$. 
We deduce by Lemma \ref{conv-mu} that we may assume, after passing to a subsequence if necessary, that $\{\mu_j\}$ converges to 
some $\mu\in\cM_{L,0}$.   
 
By the stability property of the viscosity solutions under uniform convergence, 
$w$ is a solution of $F[u]=0$ in $\t$. Hence, we find that 
$(L-\gd_j v^{\gd_j},v^{\gd_j})\in\cF_0$ and $(L+\gl_j w,w)\in\cF_{\gl_j}$, and, consequently, we get 
\[
0\leq \lan\mu, L-\gd_j v^{\gd_j}\ran =-\gd_j \lan \mu,v^{\gd_j}\ran, 
\]
and  
\[
0\leq \lan\mu_j, L+\gl_j w-\gl_j w(z)\ran
=\gl_j (v^{\gl_j}-w)(z)+\gl_j\lan \mu_j,w\ran.
\]
From these, we get, in the limit as $j\to\infty$,
\[
\lan \mu,w\ran\leq 0 \ \ \text{ and } \ \ 0\leq (v-w)(z)+\lan \mu,w\ran,
\]
which yields $w(z)\leq v(z)$, Since $z\in\t$ is arbitrary, we get 
$w\leq v$ in $\t$. This shows that \erf{thm2-sec3-1} holds. 
\end{proof}

\section{Examples} \label{example}

Theorems \ref{thm1-sec2} and \ref{thm2-sec3} can be applied under the 
structure conditions given in \cite{CrIsLi, CaLePo, ArTr}. 
Let us give some representative examples here.

\subsection{First-order Hamilton-Jacobi equations with linear growth in the gradient variable}
Consider the case that
\[
F(x,p,X):=\max_{|\alpha|\le 1}\{-c(x)\alpha\cdot p-L(x,\alpha)\}, 
\]
where $\cA=\overline{B}(0,1)$, $c\in C(\t)$ with $c>0$, and $L(x,\alpha)\in C(\b)$. 
Then \erf{DP}, \erf{EP} are first-order Hamilton-Jacobi equations with linear growth which arises 
in the context of front propagations, crystal growth, and optimal control problems. 
We can easily check that \erf{F1}--(F3), \erf{CP'}, \erf{EC} hold, 
and so the conclusion of Theorem \ref{thm1-sec2} is valid in this case.

\subsection{Viscous Hamilton-Jacobi equations with superlinear growth in the gradient variable}
Consider the function $F$ of the form
\[
F(x,p,X):= - \tr\gs^t(x)\gs(x) X +H(x,p)
\]
where $\sigma \in \Lip(\t, \R^{k \times n})$ for some $k \in \N$ and the Hamiltonian $H: \t \times \R^n \to \R$ satisfies
\begin{itemize}
\item  $p \mapsto H(x,p)$ is convex for all $x\in \t$,

\item there exist $m>1$ and $\Lambda\geq 1$ such that, for every $x,y\in \t$ and $p,q \in \R^n$,
\begin{align}
&\frac{1}{\Lambda}|p|^m - \Lambda \leq H(x,p) \leq \Lambda (|p|^m +1), \label{vis-co1}\\
&|H(x,p)-H(y,p)| \leq \Lambda (|p|^m+1) |x-y|, \label{vis-co2}\\
&|H(x,p)-H(x,q)| \leq \Lambda (|p|+|q|+1)^{m-1} |p-q|. \label{vis-co3}
\end{align}
\end{itemize}
Let $L(x,q)$ be the Legendre transform of $H(x,p)$, i.e., $L(x,q):=\sup_{p\in\R^n}\{p\cdot q-H(x,p)\}$. 
We can rewrite $F$ as 
\[
F(x,p,X) = \sup_{q \in \R^n} \left( - \tr\gs^t(x)\gs(x) X - q\cdot p - L(x,q) \right).
\]
Note that this general situation includes first-order Hamilton-Jacobi equations with 
superlinear growth (the case where $\sigma \equiv 0$).

It is clear that $F$ satisfies \erf{L} because of assumption \eqref{vis-co1} with $m>1$.
The comparison principle \erf{CP''} holds in light of \cite[Theorem 2.1]{ArTr}.
Assumption \erf{EC} holds thanks to \cite[Theorem 3.1]{ArTr}.
In fact, we have further that $\{v^\gl\}_{\gl>0}$ is equi-Lipschitz continuous with Lipschitz constant 
dependent only on $\Lambda, \Lip(\sigma), \|H(\cdot,0)\|_{C(\T^n)}$ (see also \cite{CaLePo}).
It is worth mentioning that \erf{vis-co2} is a bit more general than the classical condition (3.14) in \cite{CrIsLi}. 
For example, if $H(x,p)=c(x)|p|^2$ with $c \in \Lip(\t)$ and $c>0$, then \erf{vis-co1}--\erf{vis-co3} hold, but not  (3.14) in \cite{CrIsLi}. 

\subsection{Fully nonlinear, uniformly elliptic equations with linear growth in the gradient variable}
Assume $\cA$ is compact and 
\[
F(x,p,X):= \max_{\ga \in \cA} \left(- \tr a(x,\ga) X -b(x,\ga)\cdot p-L(x,\ga)\right), 
\]
where $a\in \Lip(\t \times \cA, \bS^n)$, $b\in \Lip(\t \times \cA,\R^n)$, $L\in C(\t \times \cA,\R)$.
We assume further that
there exists $\theta>0$ such that 
\[
\frac{1}{\theta} I \le a(x,\ga)\le \theta I \quad\text{for all} \ (x,\ga) \in\t \times \cA. 
\]
Here, $I$ denotes the identity matrix of size $n$. 

Clearly \erf{F1}--(F3) hold. Comparison, \erf{CP'}, is a 
consequence of \cite[Theorem III.1 (1)]{IsLi}. The Lipschitz 
continuity of $a$ and $b$ is assumed as a sufficient 
condition that \erf{CP'} holds. This Lipschitz continuity assumption on $a$ and $b$ can be replaced by a weaker assumption as in \cite[Theorem III.1 (1)]{IsLi}.

In view of the Krylov-Safonov estimate (see \cite{KrSa}and also \cite{Ca,Tr}), we have that
$\{v^\lambda\}_{\lambda>0}$ is equi-H\"older continuous, which implies (EC). Here are some details on this H\"older estimate. 
According to Proposition \ref{exist1} (i), there exists a constant $M_0>0$ such that $\gl |v^\gl|\leq M_0$ on $\t$, which implies that 
$v^\gl$ satisfies, in the viscosity sense,
\[
F[v^\gl]\leq M_0   \ \ \ \text{ and } \ \ \ F[v^\gl]\geq -M_0 
\ \ \ \text{ in } \ \t.
\]
According to \cite[(2.14)]{Tr}, there exist constants 
$\gamma\in (0,\,1)$ and $C>0$, depending only on 
$n$, $\gth$ and $\|b\|_{C(\b)}$, such that 
\begin{equation} \label{ue1}
\osc_{B_{\tau R}(x)}v^\gl\leq C\tau^\gamma 
\left(\osc_{B_R(x)} v^\gl+\gth M_0 R^2\right) 
\ \ \text{ for all }\ \tau\in(0,\,1),\ x\in\R^n \ \text{ and }\ R>0,
\end{equation}
where $v^\gl$ is regarded as a periodic function on $\R^n$ and 
$\osc_{B_r(x)} v^\gl:=\sup_{B_r(x)}v^\gl-\inf_{B_r(x)}v^\gl$.  
We select $\tau_0\in (0,\,1)$ so small that $C\tau_0^\gamma\leq \fr 12$, 
then select $R_0>0$ so large that $[-\fr12,\,\fr 12]^n\subset B_{\tau_0 R_0}$,
and note that 
\[
\osc_{\T^n}v^\gl=\osc_{B_{\tau_0 R_0}}v^\gl=\osc_{B_{R_0}}v^\gl, 
\] 
to find from \erf{ue1} that 
\[
\osc_{\t} v^\gl\leq \fr 12\left(\osc_{\t}v^\gl+\gth M_0R_0^2\right)
\]
and hence
\[
\osc_{\t}v^\gl \leq \gth M_0 R_0^2.
\]
Thus, \erf{ue1} yields 
\[
\osc_{B_{\tau R_0}(x)}v^\gl\leq C_1\tau^\gamma 
\ \ \text{ for all }\ \tau\in(0,\,1)\ \text{ and } \ x\in\R^n,
\] 
where $C_1:=2C\gth M_0 R_0^2$, 
which shows that $\{v^\gl\}_{\gl>0}$ is 
equi-H\"older continuous on $\t$. 

Owing to the Evans-Krylov theorem (see \cite{Ev, Kr}), 
if $F$ is locally H\"older continuous on $\t\tim\R^n\tim\bS^n$, then \erf{DP} has a unique classical solution. This allows us to formulate 
``Mather measures'' based on classical solutions (see also \cite{Go}). For instance, 
we may replace $\cF_0$ of Section \ref{sec2} by 
\[
\{(\phi,u)\in C(\b)\tim C^2(\t)\mid 
u \ \text{ is a subsolution of }\ \erf{EP'}\}. 
\] 
We leave 
a further discussion of this approach to the interested readers.

\subsection{Fully nonlinear equations with superquadratic growth in the gradient variable}
Assume that $F$ has the form
\[
F(x,p,X) := \sup_{q \in \R^n} \left( -\tr \gs^t(x,q)\gs(x,q) X - q\cdot p -L(x,q) \right)
\]
and 
\begin{itemize}

\item there exist $m>2$ and $\Lambda>0$ such that, for every $x \in \t$, $p \in \R^n$ and $X \in \bS^n$,
\[
F(x,p,X) \geq \frac{1}{\Lambda} |p|^m - \Lambda(|X|+1),
\]

\item the structural condition \cite[(3.14)]{CrIsLi} holds, that is, 
there exists a modulus of continuity $\omega:[0,\infty) \to [0,\infty]$ satisfying $\omega(0+)=0$  such that
\begin{equation}\notag
\left\{\text{
\begin{minipage}{0.85\textwidth}
$F(y,\alpha(x-y),Y) - F(x,\alpha(x-y), X) \leq \omega(\alpha|x-y|^2+|x-y|)$\\
whenever $x,y \in \t$, $\alpha>0$, $X,Y \in \bS^n$, and\\
$-3 \alpha \left(\begin{array}{cc}I&0\\0&I\end{array}\right) \leq  \left(\begin{array}{cc}X&0\\0&-Y\end{array}\right) \leq 3 \alpha \left(\begin{array}{cc}I&-I\\-I&I\end{array}\right).$
\end{minipage}
}\right.\end{equation}

\end{itemize}
Since $m>2$, \erf{L} holds true.
 The comparison principle \erf{CP''} follows from the comparison principle in \cite{CrIsLi}.
The family $\{v^\lambda\}_{\lambda>0}$ is equi-H\"older continuous with H\"older exponent $\gamma =\frac{m-2}{m-1}$ in view of 
\cite[Theorem 1.1]{CaLePo}, \cite[Lemma 3.2]{ArTr}.

We refer the reader to \cite[Example 3.6]{CrIsLi} for some further examples of $F$ satisfying (3.14) there.
Let us also point out that, because of the a priori $\gamma$-H\"older estimates for subsolutions, 
we can relax \cite[(3.14)]{CrIsLi} a bit more. 
More precise, we can replace it by \cite[(5.1)]{CrIsLi}, that is, 
there exist a modulus of continuity $\omega:[0,\infty) \to [0,\infty]$ satisfying $\omega(0+)=0$  and some $\theta>2-\gamma$ such that
\begin{equation}\notag
\left\{\text{
\begin{minipage}{0.85\textwidth}
$F(y,\alpha(x-y),Y) - F(x,\alpha(x-y), X) \leq \omega(\alpha|x-y|^\theta+|x-y|)$\\
whenever $x,y \in \t$, $\alpha>0$, $X,Y \in \bS^n$, and\\
$-3 \alpha \left(\begin{array}{cc}I&0\\0&I\end{array}\right) \leq  \left(\begin{array}{cc}X&0\\0&-Y\end{array}\right) \leq 3 \alpha \left(\begin{array}{cc}I&-I\\-I&I\end{array}\right).$
\end{minipage}
}\right.\end{equation}

\section*{Appendix} \label{app}
We here provide proofs of Propositions \ref{exist1} and \ref{prop-fund} in Introduction.

\begin{proof}[Proof of Proposition {\rm\ref{exist1}}]  (i)
 We set $M_0=\max_{x\in\t}|F(x,0,0)|$, observe that 
the constant functions $M_0/\gl$ and $-M_0/\gl$ are a supersolution 
and a subsolution of \erf{DP}, and apply the Perron method, to find 
a solution $v^\gl$ of \erf{DP} which satisfies $|v^\gl|\leq M_0/\gl$ 
on $\t$. Here, by solution, we mean that 
the upper semicontinuous envelope $(v^\gl)^*$ 
and the lower semicontinuous envelope $(v^\gl)_*$ of $v^\gl$ 
are a subsolution and a supersolution of \erf{DP}, respectively. Then, by the comparison principle, \erf{CP}, we get $(v^\gl)^*\leq (v^\gl)_*$ 
on $\t$, which implies that $v^\gl\in C(\t)$. The uniqueness of 
solutions of \erf{DP} in $C(\t)$ is also 
an immediate consequence of \erf{CP}. Clearly we have $\gl|v^\gl|\leq M_0$ for all $\gl>0$.  

(ii)  Assume that 
$c_1<c_2$ and $v\leq w$ on $\pl U$,  
set $c=(c_1+c_2)/2$, and choose $\gd>0$ small enough so that
$v$ and $w$ are a subsolution and a supersolution of $\gd u+F[u]= c$
in $U$, respectively. Then, we observe that the functions 
$v-\gd^{-1}c$ and $w-\gd^{-1}c$ are a subsolution and a supersolution of $\gd u+F[u]=0$
in $U$, respectively, and 
we use \erf{CP} 
to deduce that $v\leq w$ in $U$.
\end{proof}

\begin{proof}[Proof of Proposition {\rm\ref{prop-fund}}]
 The existence and uniqueness of $v^\gl\in C(\t)$ has 
already been
established in Proposition \ref{exist1} (i). 
Moreover, we have $\gl|v^\gl|\leq M_0$ on $\t$ for all $\gl>0$ 
and for $M_0=\max_{x\in\t}|F(x,0,0)|$. 

Now, we set $m^\gl:=\min_{\t}v^\gl$ and $u^\gl:=v^\gl-m^\gl$ 
and note by assumption \erf{EC} that the family $\{u^\gl\}_{\gl>0}$ is equi-continuous and uniformly bounded on $\t$. The Arzela-Ascoli theorem 
assures that there is a sequence $\{\gl_j\}_{j\in\N}$ of positive numbers 
converging to $0$ such that $u^{\gl_j}\to u$ in $C(\t)$ as $j\to\infty$
for some $u\in C(\t)$. Since $\gl|v^\gl|\leq M_0$ for all 
$\gl>0$, the sequence $\{\gl_j m^{\gl_j}\}_{j\in\N}\subset\R$ 
is bounded and, hence, has a convergent subsequence. Replacing 
$\{\gl_j\}_{j\in\N}$ by such a subsequence, there exists a constant $c\in\R$ such that
\[
\lim_{j\to \infty}\gl_j m^{\gl_j}=-c. 
\]  

Noting that $u^\gl$ is a solution of $\gl u^\gl + F[u^\gl]=-\gl m^\gl$
in $\t$, we deduce by the stability of viscosity properties 
under uniform convergence that $u$ is a solution of $F[u]=c$ in $\t$. 
That is, $(u,c)$ is a solution of \erf{EP}. 

To see the uniqueness of critical value for \erf{EP}, let 
$(v,d),\, (w,e)\in C(\t)\tim\R$ be two solutions of \erf{EP}, 
suppose that $d\not=e$ and get a contradiction. We may assume without 
loss of generality that $d<e$. Noting that, for any constant $C\in\R$, $(v+C,d)$ is a solution of \erf{EP} and applying Proposition \ref{exist1} (ii), we see that $v+C\leq w$ on $\t$ for all $C\in\R$. 
This is a contradiction, which shows that the critical value $c$ for 
\erf{EP} is unique. 

What remains is to show \erf{i-1}. By the equi-continuity, \erf{EP}, 
we infer that
\[
\sup_{\gl>0}\left(\max_{\t}v^\gl-\min_{\t} v^\gl\right)<\infty.
\] 
Consequently, 
\[
\lim_{j\to\infty}\gl_j\max_{\t}v^{\gl_j}=\lim_{j\to\infty}\gl_j
m^{\gl_j}=-c.
\]
Thus we find that 
\[
c=-\lim_{j\to\infty}\gl_j v^{\gl_j}(x) \ \ \text{ in }\ C(\t).
\]

Arguing as above, we see that every 
sequence $\{\gd_j\}_{j\in\N}$ of positive numbers convergent to $0$
has a subsequence, which we denote again by the same symbol,  
a constant $d\in\R$ and a function $w\in C(\t)$ such that 
\[
d=-\lim_{j\to\infty} \gd_j v^{\gd_j} \ \ \text{ in }\ C(\t),
\]   
\[
w=\lim_{j\to \infty} \left(v^{\gd_j}-\min_{\t}v^{\gd_j}\right)
\ \ \text{ in }\ C(\t),
\]
and $(w,d)$ is a solution of \erf{EP}.  Here, 
because of the uniqueness of the critical value $c$, 
we find that $d=c$. We now easily deduce that \erf{i-1} holds, and the proof is complete.
\end{proof}


\begin{bibdiv}
\begin{biblist}

\bib{AlAlIsYo}{article}{
   author={Al-Aidarous, E. S. },
   author={Alzahrani, E. O. },
   author={Ishii, H.},
   author={Younas, A. M. M. },
   title={A convergence result for the ergodic problem for
Hamilton-Jacobi equations with Neumann-type
boundary conditions},
   journal={Proc. Royal Soc. Edinburgh},
   volume={146A},
   date={2016},
   pages={225--242},
   issn={},
   review={},
}
\bib{Ar}{article}{
   author={Armstrong, Scott N.},
   title={The Dirichlet problem for the Bellman equation at resonance},
   journal={J. Differential Equations},
   volume={247},
   date={2009},
   number={3},
   pages={931--955},
   issn={0022-0396},
}

\bib{ArTr}{article}{
   author={Armstrong, S. N.},
   author={Tran, H. V.},
   title={Viscosity solutions of general viscous Hamilton-Jacobi equations},
   journal={Math. Ann.},
   volume={361},
   date={2015},
   number={3-4},
   pages={647--687},
   issn={0025-5831},
}

\bib{BCD1997}{book}{
   author={Bardi, M.},
   author={Capuzzo-Dolcetta, I.},
   title={Optimal control and viscosity solutions of Hamilton-Jacobi-Bellman
   equations},
   series={Systems \& Control: Foundations \& Applications},
note={With appendices by Maurizio Falcone and Pierpaolo Soravia},
   publisher={Birkh\"auser Boston, Inc., Boston, MA},
   date={1997},
   pages={xviii+570},
   isbn={0-8176-3640-4},
}
\bib{Ca}{article}{
   author={Caffarelli, Luis A.},
   title={Interior a priori estimates for solutions of fully nonlinear
   equations},
   journal={Ann. of Math. (2)},
   volume={130},
   date={1989},
   number={1},
   pages={189--213},
   issn={0003-486X},
}
\bib{CaLePo}{article}{
   author={Capuzzo-Dolcetta, I.},
   author={Leoni, F.},
   author={Porretta, A.},
   title={H\"older estimates for degenerate elliptic equations with coercive
   Hamiltonians},
   journal={Trans. Amer. Math. Soc.},
   volume={362},
   date={2010},
   number={9},
   pages={4511--4536},
}

\bib{CrIsLi}{article}{
   author={Crandall, M. G.},
   author={Ishii, H.},
   author={Lions, P.-L.},
   title={User's guide to viscosity solutions of second order partial
   differential equations},
   journal={Bull. Amer. Math. Soc. (N.S.)},
   volume={27},
   date={1992},
   number={1},
   pages={1--67},
}
\bib{DFIZ}{article}{
   author={Davini, A.},
   author={Fathi, A.},
   author={Iturriaga,  R.},
   author={Zavidovique, M.},
   title={Convergence of the solutions of the discounted Hamilton-Jacobi equation},
   journal={Invent. Math.},
   volume={{\rm First online}},
   date={2016},
   number={},
   pages={1--27},
}
\bib{DV1}{article}{
   author={Donsker, Monroe D.},
   author={Varadhan, S. R. S.},
   title={On a variational formula for the principal eigenvalue for
   operators with maximum principle},
   journal={Proc. Nat. Acad. Sci. U.S.A.},
   volume={72},
   date={1975},
   pages={780--783}, 
}

\bib{DV2}{article}{
   author={Donsker, M. D.},
   author={Varadhan, S. R. S.},
   title={On the principal eigenvalue of second-order elliptic differential
   operators},
   journal={Comm. Pure Appl. Math.},
   volume={29},
   date={1976},
   number={6},
   pages={595--621},
}

\bib{Ev}{article}{
   author={Evans, Lawrence C.},
   title={Classical solutions of fully nonlinear, convex, second-order
   elliptic equations},
   journal={Comm. Pure Appl. Math.},
   volume={35},
   date={1982},
   number={3},
   pages={333--363},
   issn={0010-3640},
}
\bib{Go}{article}{
   author={Gomes, D. A.},
   title={Duality principles for fully nonlinear elliptic equations},
   conference={
      title={Trends in partial differential equations of mathematical
      physics},
   },
   book={
      series={Progr. Nonlinear Differential Equations Appl.},
      volume={61},
      publisher={Birkh\"auser, Basel},
   },
   date={2005},
   pages={125--136},
}
\bib{IsLi}{article}{
   author={Ishii, H.},
   author={Lions, P.-L.},
   title={Viscosity solutions of fully nonlinear second-order elliptic
   partial differential equations},
   journal={J. Differential Equations},
   volume={83},
   date={1990},
   number={1},
   pages={26--78},
}
\bib{IsMiTr2}{article}{ 
   author={Ishii, H.},
   author={Mitake, H.},
   author={Tran, H. V.},
   title={work in progress},
}
\bib{Kr}{article}{
   author={Krylov, N. V.},
   title={Boundedly inhomogeneous elliptic and parabolic equations},
   language={Russian},
   journal={Izv. Akad. Nauk SSSR Ser. Mat.},
   volume={46},
   date={1982},
   number={3},
   pages={487--523, 670},
}
\bib{KrSa}{article}{
   author={Krylov, N. V.},
   author={Safonov, M. V.},
   title={An estimate for the probability of a diffusion process hitting a
   set of positive measure},
   language={Russian},
   journal={Dokl. Akad. Nauk SSSR},
   volume={245},
   date={1979},
   number={1},
   pages={18--20},
}

\bib{LPV}{article}{ 
   author={P.-L. Lions},
   author={G. Papanicolaou},
   author={S. R. S. Varadhan},
   title={Homogenization of Hamilton--Jacobi equations},
   journal={unpublished work (1987)},
}

\bib{Man}{article}{
   author={R. Ma\~n\'e},
   title={Generic properties and problems of minimizing measures of Lagrangian systems},
   journal={Nonlinearity},
   volume={9},
   date={1996},
   NUMBER = {2},
   pages={273--310},
}

\bib{Mat}{article}{
   author={J. N. Mather},
   title={Action minimizing invariant measures for positive definite Lagrangian systems},
   journal={Math. Z.},
   volume={207},
   date={1991}, 
   NUMBER = {2},
   pages={169--207},
}

\bib{MiTr}{article}{ 
   author={Mitake, H.},
   author={Tran, H. V.},
   title={Selection problems for a discounted degenerate viscous   Hamilton--Jacobi equation},
   journal={submitted, (Preprint is available in arXiv:1408.2909)},
}

\bib{Si}{article}{
   author={Sion, M.},
   title={On general minimax theorems},
   journal={Pacific J. Math.},
   volume={8},
   date={1958},
   pages={171--176},
}
\bib{Te}{article}{
   author={Terkelsen, F.},
   title={Some minimax theorems},
   journal={Math. Scand.},
   volume={31},
   date={1972},
   pages={405--413 (1973)},
}
\bib{Tr}{article}{
   author={Trudinger, Neil S.},
   title={On regularity and existence of viscosity solutions of nonlinear
   second order, elliptic equations},
   conference={
      title={Partial differential equations and the calculus of variations,
      Vol.\ II},
   },
   book={
      series={Progr. Nonlinear Differential Equations Appl.},
      volume={2},
      publisher={Birkh\"auser Boston, Boston, MA},
   },
   date={1989},
   pages={939--957},
}

\end{biblist}
\end{bibdiv}
\bye